\documentclass[reqno,a4paper, 11pt]{amsart}

\numberwithin{equation}{section}

\usepackage{amsfonts,amssymb,amsmath,amsthm}
\usepackage{enumerate}
\usepackage{bbm}
\usepackage{times}
\usepackage{amsfonts}
\usepackage{amssymb}
\usepackage{bbm}
\usepackage{times}
\usepackage{amsfonts}
\usepackage{amssymb}
\usepackage{bbm}
\usepackage{times}

\usepackage{enumerate}
\usepackage{amsmath}
\usepackage{amsthm}
\usepackage{color}
\usepackage [latin1]{inputenc}
\bibliographystyle{unsrt}
\newtheorem{theor}{Theorem}[section]

\newtheorem{limma}[theor]{Lemma}

\newtheorem{prop}[theor]{Proposition}

\newcounter{other}            
\newtheorem{otherth}[other]{Theorem}              
\newtheorem{otherp}[other]{ Proposition}
\newtheorem{otherl}[other]{ Lemma}        

\def\B{\mathcal{B}}
\def\D{\mathbb{D}}
\def\C{\mathbb{C}}
\def \T {\mathbb{T}}
\def \f {\frac}
\def \ind {\int_\D}

\def \d {\mathcal D}
\def \ind {\int_\D}
\def \o {\omega}
\def \op {{\omega_{[p-2]}}}
\def \vp {\varphi}
\def \ol {\overline}
\def \p {\prime}
\def \hD {\hat{\mathcal{D}}}
\begin{document}

\title[Double integral estimates for Besov type spaces and their  applications]
{Double integral estimates for Besov type spaces and their  applications}

\author{Guanlong Bao}
\address{Department of Mathematics\\
         Shantou University\\
         Shantou 515063, Guangdong, China}
\email{glbao@stu.edu.cn}

\author{Juntao Du}
\address{Department of Mathematics\\
         Shantou University\\
         Shantou 515063, Guangdong, China}
\email{jtdu007@163.com}

\author{Hasi Wulan}
\address{Department of Mathematics\\
         Shantou University\\
         Shantou 515063, Guangdong, China}
\email{wulan@stu.edu.cn}

\thanks{The work was supported by NNSF of China (No. 11720101003) and Guangdong basic and applied basic research foundation (No. 2022A1515012117). }
\subjclass[2010]{30H25; 30H20; 46E15; 47B35}
\keywords{Besov type space; Dirichlet type space;  double integral estimate; B\'ekoll\'e-Bonami weight; Hankel type operator}

\begin{abstract} For  $0<p<\infty$, we give a complete description of nonnegative radial weight functions $\omega$ on the open unit disk $\mathbb{D}$ such that
$$
 \int_{\mathbb{D}} |f'(z)|^p (1-|z|^2)^{p-2}\omega(z)dA(z)<\infty
 $$
if and only if
$$
\int_{\mathbb{D}}\int_{\mathbb{D}}\frac{|f(z)-f(\zeta)|^p}{|1-\overline{\zeta}z|^{4+\tau+\sigma}}(1-|z|^2)^{\tau}(1-|\zeta|^2)^{\sigma}\omega(\zeta)dA(z)A(\zeta)<\infty 
$$
for all analytic functions $f$ in $\mathbb{D}$, where  $\tau$ and $\sigma$ are some real numbers.  As applications, we give some geometric descriptions of functions in Besove type spaces $B_p(\omega)$ with doubling weights,  and characterize the boundedness and compactness of Hankel type operators related to Besov type spaces with radial B\'ekoll\'e-Bonami weights. Some special cases of our results  are new even for some standard weighted Besov  spaces.

\end{abstract}

\maketitle

\section{Introduction}

A classical topic in complex analysis and related field  is to study double integral estimates for Dirichlet type spaces or  Besove type spaces.
 Recall that  the Dirichlet integral of a function $f\in L^2(\T)$ is
 $$
\mathfrak{D}(f)=\int_\T \int_\T \left|\frac{f(\zeta)-f(\eta)}{\zeta-\eta}\right|^2 |d \zeta| |d\eta|,
$$
 where $\T$ is the boundary of the open unit disk $\D$ in the complex plane $\C$. By this integral, in 1931 J. Douglas \cite{Do} studied  the theory of   minimal surfaces, and in 1940 A. Beurling \cite{Be} proved that the Fourier series of a  Dirichlet function converges everywhere  except on a set of logarithmic capacity zero. Let $H(\D)$ be the space of analytic functions in   $\D$. The Dirichlet space  $\mathfrak{D}$  consists of functions $f\in H(\D)$ such that
$$
\ind |f'(z)|^2 dA(z)<\infty,
$$
where $dA(z)=(1/\pi) dxdy$ is the normalized area measure on $\D$.  It is known that
every function $f$ in $\mathfrak{D}$ has nontangential limit denoted by $f(\zeta)$ for almost every $\zeta\in \T$.
For   $f\in H(\D)$, it is  also well known that $f\in \mathfrak{D}$ if and only if $\mathfrak{D}(f)<\infty$.

For $p>1$, the Besov space $B_p$ is the space of functions $f\in H(\D)$ satisfying
$$
\ind |f'(z)|^p (1-|z|^2)^{p} d\lambda(z)<\infty,
$$
where
$$
d\lambda(z)=\frac{dA(z)}{(1-|z|^2)^2}
$$
is the M\"obius invariant measure on $\D$.  Clearly, $B_2$ is equal to the Dirichlet space $\mathfrak{D}$.
For $\alpha>-1$ and $p>\max\{1, 2/(2+\alpha)\}$, in 1988 J. Arazy, S. Fisher and J. Peetre (cf. \cite[Theorem 6.4]{AFP} and \cite[Remark, p. 1026]{AFP}) showed  that a  function $f\in B_p$ if and only if  the following  integral
$$
\ind \left(\ind |f\circ \vp_a(z)-f(a)|^2 (1-|z|^2)^\alpha dA(z)\right)^{p/2} d\lambda(a)
$$
 is finite    for $f\in H(\D)$,
where  $\vp_a(z)=(a-z)/(1-\overline{a}z)$ is a  M\"obius map interchanging the points $0$ and $a$. In \cite{AFP}, this characterization  was used to investigate
the properties of Hankel operators on weighted Bergman spaces. In 1991 K. Zhu \cite{Zhu1} gave another double integral estimates for Besov spaces; that is, if  $f\in H(\D)$, then
$f\in B_p$ if and only if
$$
\ind \ind \frac{|f(z)-f(w)|^p }{|1-\overline{z}w|^4} dA(z)dA(w)<\infty.
$$

Recall that for $p>0$ and $\alpha<p/2$, the standard weighted   Besov  space $B_p(\alpha)$ consists of functions $f\in H(\D)$ for which $$
\ind |f'(z)|^p (1-|z|^2)^{p-1-2\alpha} dA(z)<\infty.
$$
If $p=2$, then $B_p(\alpha)$ is the Dirichlet type space $\mathfrak{D}_\alpha$.
In 1993 R. Rochberg and Z. Wu \cite{RW} obtained   a double integral characterization of Dirichlet type spaces $\mathfrak{D}_\alpha$ and applied this characterization to study   Hankel type operators. In 2008 D. Blasi and J. Pau
\cite{BP} generalized this characterization   from $\mathfrak{D}_\alpha$ to  $B_p(\alpha)$ by a different method.  It is known from \cite[Theorem 2.2]{BP} that if  $f\in H(\D)$, $p>1$, $\sigma, \tau>-1$ and
$\alpha\leq 1/2$ such that $\min(\sigma, \tau)+2\alpha>-1$, then $f\in B_p(\alpha)$ if and only if
$$
\ind\ind \frac{|f(z)-f(w)|^p}{|1-\overline{w}z|^{3+\sigma+\tau+2\alpha}}(1-|w|^2)^\sigma (1-|z|^2)^\tau dA(z)dA(w)<\infty.
$$
This double integral characterization was also used  to study  Hankel type operators; see \cite[Section 4]{BP}. We refer to \cite[Section 4]{BGP} and \cite{YZ} for some recent results associated with  double integral estimates for  some analytic function spaces. See A.  Reijonen \cite{Re} for recent  results on Besov type spaces induced by some radial weights.

A function $\o: \D\rightarrow [0, \infty)$, integrable over $\D$, is called a weight. If $\o(z)=\o(|z|)$ for all $z\in \D$, then we say that $\o$ is radial. Suppose  $0<p<\infty$ and $(1-|z|^2)^{p-2}\o (z)$ is a weight on $\D$. Denote by $B_p(\o)$ the Besov type space consisting of those functions $f\in H(\D)$ such that
$$
\|f\|_{B_p(\o)}=|f(0)|+ \left(\int_{\mathbb{D}} |f'(z)|^p (1-|z|^2)^{p-2}\omega(z)dA(z)\right)^{1/p}<\infty.
 $$
 If  $\omega(z)=(1-|z|^2)^{1-2\alpha}$, then $B_p(\o)=B_p(\alpha)$.

In this paper, for all $0<p<\infty$,  we give a complete description of  radial weights  $\omega$ such that an analytic function $f$ belongs to $B_p(\o)$ if and only if $f$ satisfies certain double integral estimates. This result is  new even  for  $B_p(\alpha)$ when  $0<p\leq 1$ and $\alpha<p/2$.  As applications, we consider geometric descriptions  of functions in Besove type space with doubling weights.  We also apply these double integral estimates  to characterize  the boundedness of  Hankel type operators from  $B_p(\o)$  with  radial B\'ekoll\'e-Bonami weights  to the corresponding nonanalytic version of   Besov type spaces, which  is also new for $B_p(\alpha)$ when  $1<p<\infty$ and $1/2<\alpha <p/2$. The compactness of these Hankel type operators is  also investigated.

Throughout  this paper,   we write $a \lesssim b$ if   there exists a positive constant $C$ such that $a \leq Cb$. If  $a\lesssim b\lesssim a$, then we write  $a \thickapprox b$.

\section{Double integral estimates for $B_p(\o)$ spaces }

This section is devoted to investigate double integral estimates for $B_p(\o)$ spaces. In particular, for  $0<p<\infty$  we characterize completely radial weights $\omega$ such that
\begin{align*}
&\int_{\mathbb{D}} |f'(z)|^p (1-|z|^2)^{p-2}\omega(z)dA(z)\\
\thickapprox &\int_{\mathbb{D}}\int_{\mathbb{D}}\frac{|f(z)-f(\zeta)|^p}{|1-\overline{\zeta}z|^{4+\tau+\sigma}}(1-|z|^2)^{\tau}(1-|\zeta|^2)^{\sigma}\o(\zeta)dA(z)A(\zeta)
\end{align*}
for all $f\in H(\D)$, where $\sigma$ and $\tau$ are  real numbers  in  certain  ranges.  This  result is new even for the space  $B_p(\alpha)$ when  $0<p\leq 1$ and $\alpha<p/2$.
For  $p=2$, the  double integral estimates for $B_p(\o)$ spaces   also complete corresponding conclusions   in \cite[p. 1725]{BLQW} and  \cite[p. 210]{LQ},  respectively.

\subsection{Littlewood-Paley estimates for weighted Bergman spaces and a Forelli-Rudin type  estimate} In this subsection, we recall some Littlewood-Paley estimates for weighted Bergman spaces. We also give an elementary proof of a well-known Forelli-Rudin type  estimate. All of these estimates are  tools to prove our main theorems   in this section.

Suppose $0<p<\infty$ and $\mu$ is a nonnegative Borel measure on $\D$. The  Lebesgue space $L^p(\D, d\mu)$ consists of   complex-valued measurable functions  $f$ on $\D$ for which
$$
\|f\|_{L^p(\D, d\mu)}=\left(\ind |f(z)|^p d\mu(z)\right)^{\frac{1}{p}}<\infty.
$$
For $0<p<\infty$ and a weight $\o$ in $\D$, the weighed Bergman space $A^p_\o$ is the space of analytic functions in $L^p(\D, \o dA)$. If $\o(z)=(\alpha+1)(1-|z|^2)^\alpha$, $\alpha>-1$, then $A^p_\o$ is the standard weighted Bergman space $A^p_\alpha$.

For $f\in H(\D)$,  it is well known that the growth rate of the following functions are often comparable in some sense:
$$
f(z), (1-|z|^2)f'(z), (1-|z|^2)^2f''(z), (1-|z|^2)^3f'''(z), \cdots.
$$
This kind of estimates is usually called Littlewood-Paley estimates.
We refer to \cite{AC, APR, AS, BWZ, PP, PR2021} for the study of Littlewood-Paley estimates for $A^p_\o$; that is,  to characterize weight functions $\o$ such that
$$
\ind |f(z)-f(0)|^p \o(z)dA(z)\thickapprox \ind |f'(z)|^p (1-|z|^2)^p  \o(z)dA(z)
$$
for all $f\in H(\D)$.  When weight  $\o$ is  radial,  this question has been recently solved completely  in \cite{PR2021}.

The pseudo-hyperbolic metric on $\D$ is defined by
$$
\rho(z, w)=\left|\frac{z-w}{1-\overline{z}w}\right|, \ \ z, \ w\in \D.
$$
For $0<r<1$ and $a\in \D$, denote by
$$
\Delta(a, r)=\{z\in \D:  \rho(a, z)<r\}
$$
the  pseudo-hyperbolic disk of center $a$ and radius $r$.

The following Littlewood-Paley estimates for $A^p_\o$  \cite{BWZ} will be    useful in this  paper. For a given  weight $\o$,  the conditions appeared   in the following theorems are easy to verify.

\begin{otherth}\label{1Bao etac}
Suppose $p>0$ and $\omega$ is a weight. If there
exist two constants $r\in(0,1)$ and $C>0$ such that
$$
C^{-1}\omega(\zeta)\le\omega(z)\le C\omega(\zeta)
$$
for all $z$ and $\zeta$ satisfying $\rho(z,\zeta)<r$, then there exists another positive
constant $C$ such that
$$
\ind(1-|z|^2)^p|f'(z)|^p\omega(z)\,dA(z)\le C\ind|f(z)-f(0)|^p\omega(z)\,dA(z)
$$
for all $f\in H(\D)$.
\end{otherth}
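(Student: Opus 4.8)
The plan is to deduce the inequality from a \emph{pointwise} estimate comparing the weighted derivative at a point with a local $L^p$-average of $f$ near that point, and then to integrate this estimate against $\o\,dA$, exchange the order of integration, and use the hypothesis on $\o$ together with the geometry of pseudo-hyperbolic disks to absorb the resulting inner integral. Concretely, I would first establish that for every fixed $r\in(0,1)$ there is a constant $C=C(p,r)>0$ such that
\[
(1-|z|^2)^p|f'(z)|^p\le \f{C}{(1-|z|^2)^2}\int_{\Delta(z,r)}|f(w)-f(z)|^p\,dA(w)
\]
for all $f\in H(\D)$ and all $z\in\D$; the theorem then follows by a Tonelli argument.

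For the pointwise estimate I would reduce to the origin by a M\"obius change of variable. Setting $h(u)=f(\vp_z(u))-f(z)$ with $\vp_z(u)=(z-u)/(1-\ol{z}u)$, one has $h(0)=0$ and $|h'(0)|=(1-|z|^2)|f'(z)|$. Since $h(0)=0$, the function $\psi(u)=h(u)/u$ is analytic, $\psi(0)=h'(0)$, and $|\psi|^p$ is subharmonic; because circular means of a subharmonic function are nondecreasing in the radius, integrating over the annulus $r/2<|u|<r$ (where $|u|^{-p}$ is bounded) yields $|h'(0)|^p\lesssim\int_{|u|<r}|h(u)|^p\,dA(u)$. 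Undoing the substitution $w=\vp_z(u)$ and invoking the standard relations $|1-\ol{z}w|\thickapprox 1-|z|^2\thickapprox 1-|w|^2$ and $|\Delta(z,r)|\thickapprox(1-|z|^2)^2$, valid for $w\in\Delta(z,r)$, then produces the displayed bound. I expect this subharmonicity step to be the main obstacle: for $0<p\le 1$ one cannot directly compare $|f'|^p$ with $|f|^p$ through the usual sub-mean-value inequality, and it is precisely the passage to $\psi=h/u$ combined with the annular mean that makes the argument work uniformly for all $p>0$.

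With the pointwise estimate in hand, I would multiply by $\o(z)$, integrate over $\D$, and apply Tonelli's theorem. As $\rho$ is symmetric, the set $\{z:w\in\Delta(z,r)\}$ equals $\Delta(w,r)$, so after exchanging the order of integration the weight $\o(z)$ is evaluated only for $z\in\Delta(w,r)$, where $\rho(z,w)<r$ forces $\o(z)\thickapprox\o(w)$ by hypothesis, while at the same time $(1-|z|^2)^2\thickapprox(1-|w|^2)^2\thickapprox|\Delta(w,r)|$. To decouple the two variables inside $|f(w)-f(z)|^p$ I would use $|f(w)-f(z)|^p\lesssim|f(w)-f(0)|^p+|f(z)-f(0)|^p$ (subadditivity for $0<p\le 1$, convexity for $p\ge 1$). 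The term carrying $|f(w)-f(0)|^p$ is constant in $z$, and its inner integral $\int_{\Delta(w,r)}\o(z)(1-|z|^2)^{-2}\,dA(z)$ is comparable to $\o(w)$; the term carrying $|f(z)-f(0)|^p$ is handled by one further application of Tonelli, after which the inner integral over $w\in\Delta(z,r)$ contributes a factor $\thickapprox(1-|z|^2)^2$ that cancels the denominator. Both contributions are thus dominated by $\ind|f(z)-f(0)|^p\o(z)\,dA(z)$, which is exactly the right-hand side of the asserted inequality.
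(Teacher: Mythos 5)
Your proof is correct. Note that the paper does not prove this statement at all: it is quoted as Theorem~A from the reference [BWZ] (Bao--Wulan--Zhu, \emph{A Hardy--Littlewood theorem for Bergman spaces}), and the argument there is essentially the one you give --- a pointwise sub-mean-value estimate bounding $(1-|z|^2)^p|f'(z)|^p$ by the average of $|f(w)-f(z)|^p$ over the pseudo-hyperbolic disk $\Delta(z,r)$ (obtained, exactly as you do, by passing to $h(u)/u$ to handle all $p>0$), followed by Fubini--Tonelli, the quasi-triangle inequality for $|\cdot|^p$, and the hypothesis that $\omega$ is essentially constant on $\Delta(z,r)$. The only cosmetic remark is that your second term needs no further interchange of integrals, since $|f(z)-f(0)|^p$ is already independent of $w$ and the inner integral is just $|\Delta(z,r)|\thickapprox(1-|z|^2)^2$.
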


\begin{otherth}\label{2Bao etac}
Suppose $p>0$, $\omega$ is a weight, and there exist $t_0\ge0$
and $s_0\in[-1,0)$ with the following property: for any $t>t_0$ and $s>s_0$ there
is a positive constant $C=C(t,s)$ such that
$$
\ind\frac{\omega(\zeta)(1-|\zeta|^2)^s\,dA(\zeta)}{|1-z\overline \zeta|^{2+s+t}}\le
\frac{C\omega(z)}{(1-|z|^2)^t}
$$
for all $z\in\D$. Then there exists another positive constant $C$ such that
$$
\ind|f(z)-f(0)|^p\omega(z)\,dA(z)\le C\ind(1-|z|^2)^p|f'(z)|^p\omega(z)\,dA(z)
$$
for all $f\in H(\D)$.
\end{otherth}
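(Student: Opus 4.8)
The plan is to reduce the desired inequality to the boundedness of a positive integral operator on $L^p(\o\,dA)$, after which the hypothesis will be used as a Schur-type testing condition. Assume $f(0)=0$ (replace $f$ by $f-f(0)$) and that the right-hand side is finite. I would start from the reproducing-type identity
$$
f(z)=\ind f'(\zeta)\,\frac{(1-z\overline\zeta)^{-(b+1)}-1}{\overline\zeta}\,(1-|\zeta|^2)^{b}\,dA(\zeta),
$$
valid for each $b>-1$, which is checked first on the monomials $z^n$ via the Beta integral $\int_\D\zeta^{n-1}\overline\zeta^{m}(1-|\zeta|^2)^b\,dA=\delta_{m,n-1}B(n,b+1)$ and then extended to admissible $f$ by a standard dilation plus Fatou argument. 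Since $\bigl|(1-z\overline\zeta)^{-(b+1)}-1\bigr|\lesssim|\overline\zeta|\,|1-z\overline\zeta|^{-(b+1)}$ uniformly on $\D\times\D$, writing $a=b-1$ and absorbing one factor $(1-|\zeta|^2)$ into $g(\zeta):=(1-|\zeta|^2)|f'(\zeta)|$ gives the pointwise bound
$$
|f(z)|\lesssim\ind\frac{(1-|\zeta|^2)^{a}}{|1-z\overline\zeta|^{a+2}}\,g(\zeta)\,dA(\zeta)=:(Tg)(z).
$$
It then suffices to prove $\|Tg\|_{L^p(\o dA)}\lesssim\|g\|_{L^p(\o dA)}$, where $a$ may be taken as large as we wish.

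For $1<p<\infty$ I would apply Schur's test for the measure $d\mu=\o\,dA$ with the test function $h(\zeta)=(1-|\zeta|^2)^{-\beta}$, $\beta>0$ small, and $1/p+1/p'=1$. The first testing integral $\ind (1-|\zeta|^2)^{a-\beta p'}|1-z\overline\zeta|^{-(a+2)}\,dA(\zeta)$ is $\lesssim(1-|z|^2)^{-\beta p'}=h(z)^{p'}$ by the standard Forelli--Rudin estimate, once $a-\beta p'>-1$. The second testing integral reduces exactly to
$$
\ind\frac{\o(z)(1-|z|^2)^{-\beta p}}{|1-z\overline\zeta|^{a+2}}\,dA(z)\le C\,\frac{\o(\zeta)}{(1-|\zeta|^2)^{a+\beta p}},
$$
which is precisely the hypothesis (after interchanging the two points) with $s=-\beta p$ and $t=a+\beta p$; here $s>s_0$ can be arranged because $\beta p<|s_0|$ is attainable ($s_0<0$), while $t>t_0$ holds since $a$ is large. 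Schur's test then yields the boundedness of $T$ on $L^p(\o\,dA)$.

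For $0<p\le1$ Schur's test is unavailable, and I would instead combine subharmonicity with a lattice decomposition. Fixing a separated sequence $\{a_j\}$ with $\D=\bigcup_j\Delta(a_j,r)$ of bounded overlap, the comparabilities $(1-|\zeta|^2)\thickapprox(1-|a_j|^2)$ and $|1-z\overline\zeta|\thickapprox|1-z\overline{a_j}|$ on $\Delta(a_j,r)$, the elementary inequality $(\sum_j c_j)^p\le\sum_j c_j^p$, and the sub-mean value estimate $\sup_{\Delta(a_j,r)}|f'|^p\lesssim(1-|a_j|^2)^{-2}\int_{\Delta(a_j,2r)}|f'|^p\,dA$ (legitimate since $|f'|^p$ is subharmonic) together produce the key pointwise bound
$$
(Tg)(z)^p\lesssim\ind\frac{(1-|\zeta|^2)^{ap+2p-2}}{|1-z\overline\zeta|^{(a+2)p}}\,g(\zeta)^p\,dA(\zeta).
$$
Integrating against $\o\,dA$, applying Fubini, and invoking the hypothesis with $s=0$ and $t=(a+2)p-2>t_0$ gives $\ind|1-z\overline\zeta|^{-(a+2)p}\o(z)\,dA(z)\le C\o(\zeta)(1-|\zeta|^2)^{-((a+2)p-2)}$. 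Since the resulting powers of $(1-|\zeta|^2)$ cancel exactly, one concludes $\ind(Tg)^p\o\,dA\lesssim\ind g^p\o\,dA$, as required.

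The main obstacle is the range $0<p\le1$. For $p>1$ the hypothesis is essentially engineered to be the second Schur condition, so the argument is transparent. For $p\le1$ one cannot move the $p$-th power inside the integral defining $Tg$ for an arbitrary $g$; what rescues the proof is that $g$ comes from the analytic function $f$, so $|f'|^p$ is subharmonic and the lattice/sub-mean value step genuinely yields the displayed pointwise bound. Observe that, unlike Theorem \ref{1Bao etac}, no pointwise regularity of $\o$ on pseudo-hyperbolic disks is assumed; the weight enters only through the integral condition, so it is essential that the final application of the hypothesis be carried out in the integration variable $z$ with $\zeta$ as the free point, which is exactly what the Fubini step arranges.
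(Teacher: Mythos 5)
This statement is Theorem \ref{2Bao etac}, which the paper imports from \cite{BWZ} without reproving it; your argument is correct and follows essentially the same route as that source: the reproducing formula for $f-f(0)$ giving the pointwise bound by $Tg$ with $g=(1-|\cdot|^2)|f'|$, a Schur test in which the hypothesis is exactly the second testing condition (with $s=-\beta p>s_0$, $t=a+\beta p>t_0$) for $p>1$, and the lattice/subharmonicity pointwise estimate followed by Fubini and the hypothesis with $s=0$, $t=(a+2)p-2$ for $0<p\le 1$. The exponents check out, including the exact cancellation of the powers of $1-|\zeta|^2$ in the $p\le 1$ case.
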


We recall some  well-known estimates  as follows (see \cite[Lemma 3.10]{Zhu}).
\begin{otherl}\label{well-known estimate}
 Let $\beta$ be any real number.  Then
$$
\int^{2\pi}_0\frac{d\theta}{|1-ze^{-i\theta}|^{1+\beta}}\thickapprox
\begin{cases}1 & \enspace \text{if} \ \ \beta<0,\\
                     \log\frac{2}{1-|z|^2} & \enspace  \text{if} \ \  \beta=0,\\
                     \frac{1}{(1-|z|^2)^\beta} & \enspace \text{if}\ \  \beta>0,
                   \end{cases}
$$
for all $z\in \D$. Also,  suppose  $c$ is real and  $t>-1$. Then
$$
\int_\D\frac{(1-|w|^2)^t}{|1-\bar{z}w|^{2+t+c}}dA(w)\thickapprox
\begin{cases}1 & \enspace \text{if} \ \ \ c<0,\\
                     \log\frac{2}{1-|z|^2} & \enspace  \text{if} \ \  \ c=0,\\
                     \frac{1}{(1-|z|^2)^c} & \enspace \text{if}\ \  \ c>0,
                   \end{cases}
$$
for all $z\in \D$.
\end{otherl}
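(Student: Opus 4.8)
The plan is to prove both estimates by elementary real-variable arguments, deriving the disk estimate from the circle estimate via polar coordinates. For the first estimate I would begin with rotation invariance: writing $z=|z|e^{i\phi}$ and changing variables $\theta\mapsto\theta+\phi$ shows the integral depends only on $r:=|z|$, so I may assume $z=r\in[0,1)$. Using the identity $|1-re^{-i\theta}|^2=(1-r)^2+4r\sin^2(\theta/2)$ together with the two-sided bound $\sin(\theta/2)\thickapprox\theta$ on $[0,\pi]$ and the symmetry of the integrand about $\theta=\pi$, the integral is comparable to $\int_0^\pi\big[(1-r)^2+r\theta^2\big]^{-(1+\beta)/2}\,d\theta$. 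For $r\le 1/2$ this is trivially $\thickapprox 1$, which agrees with all three cases since $(1-r)^{-\beta}\thickapprox 1$ and $\log\frac{2}{1-r^2}\thickapprox 1$ there.

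For $r>1/2$ I would substitute $\theta=(1-r)u$ to obtain
$$
\int_0^{2\pi}\frac{d\theta}{|1-re^{-i\theta}|^{1+\beta}}\thickapprox (1-r)^{-\beta}\int_0^{\pi/(1-r)}\frac{du}{(1+u^2)^{(1+\beta)/2}},
$$
and then read off the three cases from the behavior of the truncated integral as its upper limit $N=\pi/(1-r)\to\infty$: it converges when $\beta>0$ (giving $(1-r)^{-\beta}$), grows like $\log N$ when $\beta=0$ (giving $\log\frac{2}{1-r^2}$, since the prefactor equals $1$), and grows like $N^{-\beta}$ when $\beta<0$ (so the product is $\thickapprox 1$).

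For the second estimate I would pass to polar coordinates $w=\rho e^{i\theta}$ and apply the first estimate to the angular integral with exponent $1+\beta=2+t+c$, i.e.\ $\beta=1+t+c$. After the substitution $s=\rho^2$ and then $u=1-s$, $u=(1-|z|^2)v$, the radial integral reduces, in the principal case $1+t+c>0$, to
$$
\int_\D\frac{(1-|w|^2)^t}{|1-\bar z w|^{2+t+c}}\,dA(w)\thickapprox (1-|z|^2)^{-c}\int_0^{1/(1-|z|^2)}\frac{v^t}{(1+v)^{1+t+c}}\,dv,
$$
where $t>-1$ guarantees integrability at $v=0$. The three cases then follow from the tail exponent $-1-c$ of the integrand: convergence for $c>0$, logarithmic growth for $c=0$, and growth like $N^{-c}$ for $c<0$, exactly mirroring the circle analysis.

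The routine part is the two substitutions; the main obstacle, or at least the point requiring care, is twofold. First, one must ensure that all comparison constants are uniform in $z$ (equivalently in $r$ or $|z|$) and depend only on $\beta,t,c$, which is why I isolate the trivial regime $|z|\le 1/2$. Second, one must handle the subcases of the disk estimate in which $1+t+c\le 0$; these can occur only when $c<0$, because $t>-1$ forces $1+t+c>0$ whenever $c\ge 0$. In those subcases the angular integral is the constant or logarithmic alternative of the first estimate rather than the power, and I would check directly that integrating it against $(1-\rho^2)^t\rho\,d\rho$ again yields a bounded quantity, consistent with the stated value $\thickapprox 1$ for $c<0$.
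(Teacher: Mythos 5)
Your proposal is correct. Note that the paper does not prove this lemma at all: it is quoted as a known result with a pointer to \cite[Lemma 3.10]{Zhu}, where the standard argument for the circle integral expands $(1-ze^{-i\theta})^{-(1+\beta)/2}$ in a binomial series $\sum_k c_k z^k e^{-ik\theta}$ with $c_k\thickapprox k^{(\beta-1)/2}$, applies Parseval to reduce the integral to $\sum_k k^{\beta-1}|z|^{2k}$, and invokes the elementary asymptotics of such power series; the disk integral is then obtained in polar coordinates exactly as you do. Your route replaces the Taylor-coefficient/Parseval step by the pointwise identity $|1-re^{-i\theta}|^2=(1-r)^2+4r\sin^2(\theta/2)$ and a rescaling $\theta=(1-r)u$, which is a genuinely different and more elementary argument: it avoids Stirling-type estimates for binomial coefficients and works verbatim for non-even exponents $1+\beta$, at the cost of having to track uniformity of constants by hand (which you correctly isolate via the regime $|z|\le 1/2$). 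The two points that genuinely require care are exactly the ones you flag: uniformity in $z$, and the degenerate subcases $1+t+c\le 0$ of the disk estimate, which indeed force $c<0$ because $t>-1$, and for which the angular integral is bounded by $\log\frac{2}{1-\rho^2}$ or by a constant, hence integrable against $(1-\rho^2)^t\rho\,d\rho$; together with the trivial lower bound coming from $|1-\bar zw|\le 2$, this closes all cases. Your argument is complete and self-contained.
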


The following Forelli-Rudin type  estimate  \cite{OF} is  very   useful in the analysis of some  function spaces. A rather complicated proof of this  estimate  was given  in \cite{Zhao}. Later,  for $s>-1$, $r>0$, $t>0$, and  $t+1<s+2<r$, a simple proof was given in \cite[pp. 27-28]{WZ}, where the case of $t<s+2\leq t+1<r$ is missing.
\begin{otherl} \label{Forelli-Rudin type  estimates}
Suppose $s>-1$, $r>0$, $t>0$, and  $t<s+2<r$. Then there exists a positive constant $C$ such that
$$
\ind\frac{(1-|w|^2)^s}{|1-\overline{w}z|^r |1-\overline{w}\zeta|^t} dA(w) \leq
C\,\frac{(1-|z|^2)^{2+s-r}}{|1-\overline{\zeta}z|^t}
$$
for all $z,\zeta\in\D$.
\end{otherl}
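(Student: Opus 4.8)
The plan is to estimate the integral by splitting $\D$ into two pieces according to the size of $|1-\overline{w}\zeta|$ relative to $|1-\overline{\zeta}z|$, and to reduce each piece to the one-variable estimates in Lemma \ref{well-known estimate}. Write $d=|1-\overline{\zeta}z|$. Two elementary facts come first: (i) the triangle-type inequality $|1-\overline{\zeta}z|\le |1-\overline{w}z|+|1-\overline{w}\zeta|$ for all $w,z,\zeta\in\D$, which follows from the identity $|1-\overline{w}\zeta|^2-|w-\zeta|^2=(1-|w|^2)(1-|\zeta|^2)$ (hence $|w-\zeta|\le|1-\overline{w}\zeta|$) together with $1-\overline{\zeta}z=(1-\overline{w}z)+z(\overline{w}-\overline{\zeta})$; and (ii) the comparison $1-|z|^2\le 2|1-\overline{\zeta}z|=2d$, coming from $|1-\overline{\zeta}z|\ge 1-|z|\ge \tfrac12(1-|z|^2)$. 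Fact (ii) is what will ultimately let me trade a power of $d$ for the required power of $1-|z|^2$.

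On the region $E_1=\{w:|1-\overline{w}\zeta|\ge d/2\}$ I would bound $|1-\overline{w}\zeta|^{-t}\le (2/d)^t$ and pull it out, leaving $\ind (1-|w|^2)^s|1-\overline{w}z|^{-r}\,dA(w)$. Since $r>s+2$, the second estimate in Lemma \ref{well-known estimate} (with $c=r-s-2>0$) gives this integral $\thickapprox (1-|z|^2)^{s+2-r}$, so the contribution of $E_1$ is $\lesssim (1-|z|^2)^{2+s-r}/d^t$, exactly the target. On the complementary region $E_2=\{w:|1-\overline{w}\zeta|<d/2\}$, inequality (i) forces $|1-\overline{w}z|>d/2$, so I would pull out $|1-\overline{w}z|^{-r}\le (2/d)^r$ and be left with $\int_{E_2}(1-|w|^2)^s|1-\overline{w}\zeta|^{-t}\,dA(w)$.

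Here the singularity $|1-\overline{w}\zeta|^{-t}$ is still present, so I would decompose $E_2$ dyadically into the shells where $|1-\overline{w}\zeta|\thickapprox 2^{-j}(d/2)$ and use the Carleson-box estimate $\int_{\{|1-\overline{w}\zeta|<\rho\}}(1-|w|^2)^s\,dA(w)\lesssim\rho^{s+2}$ (valid for $s>-1$, a short polar computation around $\zeta/|\zeta|$) on each shell. The per-shell bound is $\lesssim (2^{-j}d)^{s+2-t}$, and since $s+2-t>0$ the geometric series converges, giving $\int_{E_2}(1-|w|^2)^s|1-\overline{w}\zeta|^{-t}\,dA(w)\lesssim d^{s+2-t}$. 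Hence the $E_2$ contribution is $\lesssim d^{s+2-t-r}$, and applying Fact (ii) (recall $s+2-r<0$, so $d^{s+2-r}\lesssim (1-|z|^2)^{s+2-r}$) again yields $\lesssim (1-|z|^2)^{2+s-r}/d^t$. Adding the two regions gives the claim.

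The main obstacle is the region $E_2$: the bound there cannot be obtained by simply discarding the kernel (the exponent $-t$ is singular), and it is precisely here that the missing case $t<s+2\le t+1$ of \cite{WZ} lives, where the surplus $s+2-t$ lies in $(0,1]$. The dyadic decomposition handles all admissible exponents uniformly because convergence of the geometric series requires exactly the hypothesis $s+2>t$, while the other hypothesis $s+2<r$ enters only through Fact (ii) and the one-variable estimate on $E_1$.
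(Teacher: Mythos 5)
Your proof is correct, and it takes a genuinely different route from the paper's. The paper first performs the M\"obius change of variables $w=\varphi_z(a)$, which extracts the factor $(1-|z|^2)^{s+2-r}$ and, via the standard identity for $|1-\overline{\zeta}\varphi_z(a)|$, also the factor $|1-\overline{z}\zeta|^{-t}$; it then splits according to which of $|1-\overline{z}a|$ and $|1-a\overline{\varphi_z(\zeta)}|$ dominates, so that each piece is bounded by a supremum over $b\in\D$ of $\int_\D (1-|a|^2)^s|1-\overline{a}b|^{-\kappa}\,dA(a)$ with $\kappa<s+2$, i.e.\ by Lemma \ref{well-known estimate} with negative exponent $c$ --- no dyadic summation is needed. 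You instead work directly in the $w$ variable, split $\D$ according to whether $|1-\overline{w}\zeta|$ is larger or smaller than $\tfrac12|1-\overline{\zeta}z|$, and handle the singular piece by a dyadic shell decomposition together with the Carleson-box estimate $\int_{\{|1-\overline{w}\zeta|<\rho\}}(1-|w|^2)^s\,dA(w)\lesssim\rho^{s+2}$. Your two elementary facts, the per-region estimates, and the use of each hypothesis ($t>0$ and $s+2<r$ on $E_1$; $r>0$, $s+2>t$ for the geometric series, and $s+2<r$ again via Fact (ii) on $E_2$) all check out, and like the paper's argument your proof covers the whole stated parameter range, including the case $t<s+2\le t+1$ missing from \cite{WZ}. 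What the paper's route buys is brevity: after the change of variables both regions collapse to uniformly convergent integrals already covered by Lemma \ref{well-known estimate}. What your route buys is self-containedness: you avoid the M\"obius identities entirely, at the cost of one extra (but standard and short) measure estimate for the sets $\{|1-\overline{w}\zeta|<\rho\}$.
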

We present an elementary proof of Lemma \ref{Forelli-Rudin type  estimates} here.
For  $z$, $\zeta\in\D$, by a change of variables $w=\varphi_z(a)$ and Lemma \ref{well-known estimate}, we see that
\begin{align*}
&\ind\frac{(1-|w|^2)^s}{|1-\overline{w}z|^r |1-\overline{w}\zeta|^t} dA(w)\\
=&(1-|z|^2)^{s+2-r}\ind \frac{ (1-|a|^2)^sdA(a)}{|1-\overline{z}a|^{4+2s-r}|1-\overline{\zeta }\varphi_z(a)|^t}\\
=& \frac{(1-|z|^2)^{s+2-r}}{|1-\overline{z}\zeta|^t} \ind \frac{(1-|a|^2)^sdA(a)}{|1-\overline{z}a|^{4+2s-r-t}|1-a \overline{\varphi_z(\zeta)}|^t}\\
\leq & \frac{(1-|z|^2)^{s+2-r}}{|1-\overline{z}\zeta|^t} \int_{\{a\in \D: |1-\overline{z}a|\geq  |1-a \overline{\varphi_z(\zeta)}|\}} \frac{(1-|a|^2)^sdA(a)}{|1-\overline{z}a|^{4+2s-r-t}|1-a \overline{\varphi_z(\zeta)}|^t}\\
&+  \frac{(1-|z|^2)^{s+2-r}}{|1-\overline{z}\zeta|^t} \int_{\{a\in \D: |1-\overline{z}a|\leq   |1-a \overline{\varphi_z(\zeta)}|\}} \frac{(1-|a|^2)^sdA(a)}{|1-\overline{z}a|^{4+2s-r-t}|1-a \overline{\varphi_z(\zeta)}|^t}\\
\lesssim &  \frac{(1-|z|^2)^{s+2-r}}{|1-\overline{z}\zeta|^t} \left(\sup_{b\in \D}\ind \frac{(1-|a|^2)^sdA(a)}{|1-\overline{a}b|^{4+2s-r}}+ \sup_{b\in \D}\ind \frac{(1-|a|^2)^sdA(a)}{|1-\overline{a}b|^{t}}\right)\\
\lesssim &  \frac{(1-|z|^2)^{s+2-r}}{|1-\overline{z}\zeta|^t},
\end{align*}
 which finishes the proof of Lemma \ref{Forelli-Rudin type  estimates}.

\subsection{Weighted Bergman spaces and doubling weights}

In this subsection, we recall characterizations of Carleson measures for $A^p_\o$ induced by doubling weights and also give some estimates related to these weights.

 Let $\hat{\d}$ be the class of radial weights $\o$ on $\D$ for which $\hat{\o}(r)=\int_r^1 \o(s)ds$ admits the doubling property $\hat{\o}(r) \leq C \hat{\o}(\frac{1+r}{2})$ for all $r\in [0, 1)$, where $C=C(\o)>1$. A weight in $\hat{\d}$ is usually  called a doubling weight. If there exist
 $K=K(\o)>1$ and $C=C(\o)>1$  such that the radial weight $\o$ satisfies
 $$
 \hat{\o}(r)\geq C \hat{\o}\left(1-\frac{1-r}{K}\right), \ \ 0\leq r<1,
 $$
then we say that $\o\in \check{\d}$. The intersection $\hat{\d} \cap \check{\d}$ is denoted by $\d$.
The classes of these weights arose naturally in the study of some analytic function spaces and related operator theory. For instance, by \cite{PR2021}, for a radial weight $\o$, certain  Bergman projection $P_\o$ is bounded from $L^\infty$  to the Bloch space $\B$ if and only if $\o \in \hat{\d}$; $P_\o: \ L^\infty\rightarrow \B$ is bonded and onto if and only if $\o \in \d$; the classes $\hat{\d}$ and $\d$ also characterize completely Littlewood-Paley estimates for weighted Bergman spaces with radial weights.   See \cite{Pe, PR2014}  for properties of these weights. Our investigation in  this section will guide us to  find new and more  significance of the class   $\hat{\d}$; that is, the weight in  $\hat{\d}$ describes precisely certain double integral estimate for  Besov type  spaces.

For a   space $X$ of analytic functions on $\D$ and $0<p<\infty$, a nonnegative Borel measures $\mu$ on $\D$ is said to be a $p$-Carleson measure for $X$ if the identity operator $I_d:X\to L^p(\D, d\mu)$ is bounded; that is,
$$
\left(\ind |f(z)|^p d\mu(z)\right)^{\frac{1}{p}} \lesssim \|f\|_{X}
$$
for all $f\in X$.  If $I_d:X\to L^p(\D, d\mu)$ is compact, then we say that $\mu$ is a vanishing $p$-Carleson measure for $X$.

 For a  doubling weight $\o$ and   $0<p, q<\infty$,  J. Pel\'aez and J. R\"atty\"a \cite{PR} characterized nonnegative Borel measures $\mu$ on $\D$ such that the differentiation operator of order $n\in \mathbb{N} \cup \{0\}$ is bounded  from $A_{\o}^p$ into $L^q(\D, d\mu)$. In particular, they gave  the following result.

\begin{otherth} \label{CM for Apw}
Suppose $0<p<\infty$, $\o\in \hat{\d}$ and $\mu$ is a nonnegative Borel measure on $\D$. Then $\mu$ is a $p$-Carleson measure for $A^p_\o$ if and only if the function
$$
a\mapsto \frac{\mu(S(a))}{\int_{S(a)} wdA}, \ \ a\in \D\setminus \{0\},
$$
is essentially bounded,  where
$$
S(a)=\left\{z\in \D: \left|\frac{\arg z-\arg a}{2\pi}\right|<\frac{1-|a|}{2}, \ |z|\geq |a|\right\}
 $$
is the Carleson box with vertex at $a$.
\end{otherth}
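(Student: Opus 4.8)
The plan is to prove the two implications separately, the backbone of both being the elementary comparison $\o(S(a)):=\int_{S(a)}\o\,dA\thickapprox(1-|a|)\hat{\o}(|a|)$ for radial $\o$, together with the test-function estimate
$$
\ind\f{(1-|a|^2)^{\gamma p}}{|1-\overline a z|^{\gamma p}}\,\o(z)\,dA(z)\thickapprox\o(S(a)),\qquad a\in\D\setminus\{0\},
$$
valid once $\gamma p$ is large. This comparison is exactly where $\o\in\hD$ enters: the doubling of $\hat{\o}$ forces the contributions from the dyadic annuli $\{z:2^{k}(1-|a|)\le 1-|z|<2^{k+1}(1-|a|)\}$ around $|a|$ to form a convergent geometric series summing to $\o(S(a))$. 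I would isolate this as a preliminary lemma, proved by splitting $\D$ into the Carleson box $S(a)$ and these annuli and estimating each piece with Lemma \ref{well-known estimate} and the doubling property.

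For the necessity, fix $a$ and take $f_a(z)=\big((1-|a|^2)/(1-\overline a z)\big)^{\gamma}$ with $\gamma p$ large enough that $f_a\in A^p_\o$. A direct computation with the geometry of the box gives $|1-\overline a z|\thickapprox 1-|a|$, hence $|f_a(z)|\gtrsim1$, for every $z\in S(a)$. Consequently
$$
\mu(S(a))\lesssim\int_{S(a)}|f_a|^p\,d\mu\le\ind|f_a|^p\,d\mu\lesssim\|f_a\|_{A^p_\o}^p\thickapprox\o(S(a)),
$$
where the third inequality is the assumed boundedness of $I_d\colon A^p_\o\to L^p(\D,d\mu)$ and the last comparison is the preliminary lemma. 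Taking the supremum over $a$ yields the stated box condition.

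For the sufficiency I would pass to a discretization of $\D$. The naive route is to fix a covering by pseudohyperbolic disks $\Delta(a_k,r)$ of finite overlap, bound $\ind|f|^p\,d\mu$ by $\sum_k\mu(\Delta(a_k,r))\sup_{\Delta(a_k,r)}|f|^p$, and combine $\mu(\Delta(a_k,r))\lesssim\o(S(a_k))$ with a sub-mean value estimate $\sup_{\Delta(a_k,r)}|f|^p\lesssim\o(S(a_k))^{-1}\int_{\Delta(a_k,R)}|f|^p\o\,dA$; summing and using finite overlap would finish. The difficulty, and the heart of the matter for the class $\hD$, is that this sub-mean value estimate is in general false: a weight in $\hD$ may vanish on an entire radial sub-annulus of $S(a_k)$ (doubling of $\hat{\o}$ controls only the tail, not the local density), so $\o(\Delta(a_k,r))$ can be $0$ while $\o(S(a_k))>0$, and one cannot recover the values of $f$ near $a_k$ from a weighted local average.

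To circumvent this degeneracy I would replace the local sub-mean value by the non-tangential maximal function $N(f)(u)=\sup_{z\in\Gamma(u)}|f(z)|$, where $\Gamma(u)$ is the approach region attached to the radial projection of $u$, and use two facts available for $\o\in\hD$: the weighted maximal theorem $\ind N(f)^p\,\o\,dA\lesssim\ind|f|^p\,\o\,dA$, and a tent/level-set decomposition in which the superlevel sets $\{N(f)>\lambda\}$ are covered by Carleson boxes, so that the box condition $\mu(S(a))\lesssim\o(S(a))$ transfers the $\mu$-mass of each tent to its $\o$-mass. Writing $\ind|f|^p\,d\mu\le\ind N(f)^p\,d\mu=p\int_0^\infty\lambda^{p-1}\mu(\{N(f)>\lambda\})\,d\lambda$ and estimating $\mu(\{N(f)>\lambda\})\lesssim M\,\o(\{N(f)>\lambda\})$ through the covering, I would arrive at $\ind|f|^p\,d\mu\lesssim M\ind N(f)^p\,\o\,dA\lesssim M\ind|f|^p\,\o\,dA$, where $M$ denotes the supremum in the box condition. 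The main obstacle is thus confined to the sufficiency, and specifically to the possible local degeneracy of $\hD$ weights; the non-tangential maximal function, which "sees" the weight mass all the way to the boundary, is precisely the device that makes the box condition usable.
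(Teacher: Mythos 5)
This statement is Theorem~C of the paper, which is quoted from Pel\'aez--R\"atty\"a \cite{PR} as background; the paper contains no proof of it, so there is no internal argument to compare yours against. Judged on its own, your outline is essentially correct and follows the same harmonic-analysis philosophy as the cited source. The necessity half is exactly the standard argument and in fact coincides with machinery the paper does record: your preliminary comparison $\|F_{a,p}\|^p_{A^p_\o}\thickapprox\int_{S(a)}\o\,dA$ is Lemma~F (from \cite{Pe}), and combining it with $|F_{a,p}|\gtrsim 1$ on $S(a)$ and the assumed boundedness of $I_d$ gives the box condition. Your diagnosis of why the naive sufficiency argument fails is also exactly right: an $\hD$ weight may vanish on interior annuli, so no local sub-mean-value inequality with respect to $\o$ is available, and some global maximal device is forced.

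Two caveats on the sufficiency half. First, the weighted maximal theorem $\|N(f)\|_{L^p(\D,\o dA)}\lesssim\|f\|_{A^p_\o}$ for $\o\in\hD$ is itself a theorem of essentially the same depth as the statement being proved (it is again due to Pel\'aez--R\"atty\"a); invoking it makes your argument a legitimate reduction to known results rather than a self-contained proof. The published route uses instead the H\"ormander-type maximal function $M_\o(\varphi)(z)=\sup_{a:\,z\in S(a)}\o(S(a))^{-1}\int_{S(a)}|\varphi|\o\,dA$, the pointwise bound $|f|^{p\alpha}\lesssim M_\o(|f|^{p\alpha})$ for analytic $f$, and a weak-type estimate plus interpolation under the box condition; that version needs only the comparison $\o(S(a))\thickapprox(1-|a|)\hat\o(|a|)$ and avoids the full nontangential maximal theorem. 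Second, in your level-set step $\mu(\{N(f)>\lambda\})\lesssim M\,\o(\{N(f)>\lambda\})$, the covering by shadows requires you to apply the box condition to slightly dilated boxes and then return to the original ones inside the level set; the comparison $\o(S(a^*))\lesssim\o(S(a))$ for a dilate $S(a^*)$ is precisely where the doubling of $\hat\o$ enters, and you should say so explicitly, since for a general radial weight this step fails. With those two points made precise, your proof is sound.
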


The following characterizations of doubling weights can be found in  \cite{Pe}.

\begin{otherl} \label{property of doub w}
Suppose $\o$ is a radial weight. Then the following conditions are equivalent:
\begin{enumerate}[(i)]
  \item  $\o \in \hat \d$;
  \item  there exists  a positive constant  $\beta$ depending only on  $\o$ such that
  \begin{align}\label{1023-1}
  \frac{\hat{\o}(r)}{(1-r)^\beta}\lesssim \frac{\hat{\o}(t)}{(1-t)^\beta}
  \end{align}
  for all  $0\leq r\leq t<1$;
  \item  there exists a positive constant   $\gamma$  depending only on  $\o$  such that
  \begin{align}\label{1023-2}
  \int_0^t \frac{\o (s)}{(1-s)^\gamma}ds\lesssim \frac{\hat \o(t)}{(1-t)^\gamma}
  \end{align}
  for all   $0\leq t<1$;
  \item there exists a nonnegative constant  $\lambda$ depending only on  $\o$  such that
 \begin{align}\label{1023-3}
  \int_\D \frac{\o(z)dA(z)}{|1-\overline{\xi}z|^{\lambda+1}}\lesssim \frac{\hat{\o}(\xi)}{(1-|\xi|)^\lambda}
  \end{align}
  for all $\xi\in\D$;
  \item  there exists a positive constant $\eta$ depending only on  $\o$  such that
\begin{align}\label{1023-4}
  \int_0^1 s^x\o(s)ds\lesssim \left(\frac{y}{x}\right)^\eta \int_0^1 s^y \o(s)ds
  \end{align}
   for all  $0<x\leq y<\infty$.
\end{enumerate}
\end{otherl}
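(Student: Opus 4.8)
The plan is to establish the cycle of implications (i)$\Rightarrow$(ii)$\Rightarrow$(iii)$\Rightarrow$(iv)$\Rightarrow$(i) and to treat (v) separately through a moment--tail correspondence. The guiding observation is that $\hat{\o}(r)=\int_r^1\o(s)\,ds$ is decreasing and that each of (ii)--(v) only asserts the \emph{existence} of a suitable exponent, so that enlarging $\beta,\gamma,\lambda,\eta$ is harmless and I may match up the exponents freely when passing from one condition to the next. First I would prove (i)$\Leftrightarrow$(ii). Writing the doubling hypothesis as $\hat{\o}(r)\le C\,\hat{\o}(1-\tfrac{1-r}{2})$ and iterating $n$ times gives $\hat{\o}(r)\le C^{n}\hat{\o}(1-2^{-n}(1-r))$. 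For $0\le r\le t<1$ I would take $n=\lceil\log_2\frac{1-r}{1-t}\rceil$, so that $1-2^{-n}(1-r)\ge t$; monotonicity of $\hat{\o}$ then yields $\hat{\o}(r)\lesssim C^{n}\hat{\o}(t)\thickapprox\big(\tfrac{1-r}{1-t}\big)^{\log_2 C}\hat{\o}(t)$, which is exactly (ii) with $\beta=\log_2 C$. The converse is immediate on specializing (ii) to $t=\frac{1+r}{2}$.

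Next, for (ii)$\Rightarrow$(iii) I would split $[0,t]$ into dyadic annuli $t_{k+1}\le s\le t_k$ with $1-t_k\thickapprox 2^{k}(1-t)$, so that $1-s\thickapprox 2^k(1-t)$ on each. Then $\int_{t_{k+1}}^{t_k}\frac{\o(s)}{(1-s)^\gamma}\,ds\lesssim \big(2^k(1-t)\big)^{-\gamma}\hat{\o}(t_{k+1})$, and the doubling property gives $\hat{\o}(t_{k+1})\lesssim C^{k}\hat{\o}(t)$. Summing the geometric series $\sum_k (C\,2^{-\gamma})^k$, which converges once $\gamma>\log_2 C$, produces (iii). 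For the return arc (iii)$\Rightarrow$(i) I would put $t=\frac{1+r}{2}$ in (iii) and bound its left-hand side from below by $\frac{1}{(1-r)^\gamma}\int_r^{(1+r)/2}\o(s)\,ds=\frac{\hat{\o}(r)-\hat{\o}(\frac{1+r}{2})}{(1-r)^\gamma}$, using $1-s\le 1-r$ on $[r,\frac{1+r}2]$; comparing with the right-hand side $\thickapprox(1-r)^{-\gamma}\hat{\o}(\frac{1+r}2)$ gives $\hat{\o}(r)-\hat{\o}(\frac{1+r}{2})\lesssim\hat{\o}(\frac{1+r}{2})$, i.e. (i).

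To close the loop through (iv) I would exploit that $\o$ is radial: after a rotation I may take $\xi=|\xi|\ge0$, and Lemma \ref{well-known estimate} evaluates the angular integral, giving $\int_\D\frac{\o(z)\,dA(z)}{|1-\overline\xi z|^{\lambda+1}}\thickapprox\int_0^1\frac{\o(\rho)}{(1-|\xi|\rho)^{\lambda}}\,d\rho$ for $\lambda>0$. Using $1-|\xi|\rho\thickapprox\max(1-\rho,1-|\xi|)$ I would split the $\rho$-integral at $\rho=|\xi|$: the part $\rho<|\xi|$ equals, up to constants, the integral in (iii) (with $\gamma=\lambda$), while the part $\rho>|\xi|$ equals $(1-|\xi|)^{-\lambda}\hat{\o}(|\xi|)$. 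Hence (iii) and (iv) are equivalent with $\gamma=\lambda$, and since I am free to take $\lambda$ large this matches the $\gamma$ produced above, completing the cycle.

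Finally, for (v) I would integrate by parts to obtain the exact identity $\omega_x:=\int_0^1 s^x\o(s)\,ds=x\int_0^1 s^{x-1}\hat{\o}(s)\,ds$ and then prove the correspondence $\omega_x\thickapprox\hat{\o}(1-\tfrac1x)$ for $x\ge1$: the lower bound is unconditional (monotonicity of $\hat{\o}$), and the upper bound follows from (ii) together with $\int_0^1 s^{x-1}(1-s)^\beta\,ds\thickapprox x^{-\beta-1}$. Inserting this into (ii) with $r=1-\frac1x\le t=1-\frac1y$ converts the polynomial comparison into $\omega_x\lesssim(\tfrac yx)^\beta\omega_y$, which is (v) with $\eta=\beta$ (the range $0<x<1$ being trivial, since there all moments are $\thickapprox\|\o\|_{L^1}$). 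I expect the reverse implication (v)$\Rightarrow$(i) to be the main obstacle: the correspondence $\omega_x\thickapprox\hat{\o}(1-\frac1x)$ that makes the forward direction transparent already presupposes doubling, so one must instead argue that the polynomial control of the moments forbids any concentration of $\hat{\o}$-mass between $r$ and $\frac{1+r}2$. Controlling the attendant dyadic error sum without a self-referential absorbing constant is the delicate point, and is where the genuine content of the equivalence with (v) resides.
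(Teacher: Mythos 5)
Your cycle (i)$\Rightarrow$(ii)$\Rightarrow$(iii)$\Rightarrow$(iv)$\Rightarrow$(i) is correct: the iteration of the doubling inequality, the dyadic decomposition for (ii)$\Rightarrow$(iii), the test $t=\frac{1+r}{2}$ for (iii)$\Rightarrow$(i), and the reduction of (iv) to (iii) via Lemma \ref{well-known estimate} and $1-|\xi|\rho\thickapprox\max(1-\rho,1-|\xi|)$ all check out, as does (ii)$\Rightarrow$(v) through the moment--tail correspondence $\omega_x:=\int_0^1 s^x\o(s)\,ds\thickapprox\hat{\o}(1-1/x)$. For the record, the paper does not prove this lemma at all --- it is quoted from the reference [Pe] --- but several of your steps reappear inside the paper's proof of Lemma \ref{Inf U}: the integration by parts in (\ref{7100}) is your (ii)$\Rightarrow$(iii), testing (\ref{1023-3}) at $|\xi|=t$ is your (iv)$\Rightarrow$(ii), and the maximization of $x^{\beta}s^x(1-s)^{\beta}$ is your (ii)$\Rightarrow$(v). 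The genuine gap is (v)$\Rightarrow$(i), which you explicitly leave open. This implication cannot be deferred: it is load-bearing in the paper, since the proof of Theorem \ref{2main}(i) derives precisely condition (v) from the double-integral hypothesis and then invokes this lemma to conclude $\op\in\hD$, and even the paper's own argument in Lemma \ref{Inf U} (``suppose (v) holds\dots then $\o\in\hD$'') appeals to the cited lemma for exactly this step.

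The implication is nonetheless provable, and the ``self-referential absorbing constant'' you worry about is exactly the right mechanism. Unconditionally $\hat{\o}(1-1/x)\le (1-1/x)^{-x}\omega_x\lesssim\omega_x$ for $x\ge 2$. For the converse bound on $\omega_y$, fix an integer $N$ and split
$$
\omega_y\le\sum_{n\ge N}\int_{1-2^{n+1}/y}^{1-2^{n}/y}s^y\o(s)\,ds+\hat{\o}\Bigl(1-\tfrac{2^N}{y}\Bigr)\le\sum_{n\ge N}e^{-2^{n}}\,\hat{\o}\Bigl(1-\tfrac{2^{n+1}}{y}\Bigr)+\hat{\o}\Bigl(1-\tfrac{2^N}{y}\Bigr),
$$
then estimate each summand by the unconditional bound followed by (v): $\hat{\o}(1-2^{n+1}/y)\lesssim\omega_{y/2^{n+1}}\lesssim 2^{(n+1)\eta}\omega_y$. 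Since $\sum_n e^{-2^{n}}2^{(n+1)\eta}<\infty$ for every fixed $\eta$, its tail from $n=N$ onward can be made smaller than $1/(2C_0)$, $C_0$ being the product of the two implied constants; as $\omega_y<\infty$, the term $\frac12\omega_y$ can be absorbed into the left-hand side, giving $\omega_y\le 2\hat{\o}(1-2^N/y)$ with $N=N(\o)$ fixed. Chaining $\hat{\o}(1-1/x)\lesssim\omega_x\lesssim(y/x)^{\eta}\omega_y\lesssim(y/x)^{\eta}\hat{\o}(1-2^N/y)$ and substituting $r=1-1/x$, $t=1-2^N/y$ yields (\ref{1023-1}) with $\beta=\eta$ up to the fixed factor $2^{N\eta}$, hence (ii) and (i). The point you missed is that the super-geometric decay $e^{-2^{n}}$ beats the geometric growth $2^{n\eta}$ for any fixed $\eta$, so the offending constant is pushed below $1$ simply by starting the dyadic sum far enough out; no circularity arises.
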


 Of course,   constants $\beta$, $\gamma$, $\lambda$ and $\eta$ in Lemma \ref{property of doub w} are  not unique. In fact, if   $\beta$  satisfies  (ii) in  Lemma \ref{property of doub w}, then any  constant  bigger than $\beta$ also satisfies    (ii).  The same phenomenon occurs for $\gamma$, $\lambda$ and $\eta$.
Now we give  the following  relation among these parameters. The infimum below is useful in  our results.

\begin{limma} \label{Inf U}
Suppose $\o \in \hat \d$. Then
\begin{align*}
&\inf\{\beta: \ \beta\  \text{satisfies  (ii) in Lemma \ref{property of doub w}}  \}\\
=&\inf \{\gamma: \ \gamma\  \text{satisfies (iii) in Lemma \ref{property of doub w}}  \}\\
=&\inf \{\lambda: \ \lambda \  \text{satisfies (iv) in Lemma \ref{property of doub w}}  \}\\
=&\inf \{\eta: \ \eta \  \text{satisfies (v) in Lemma \ref{property of doub w}}  \}.
\end{align*}
\end{limma}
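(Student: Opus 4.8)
The plan is to show that the four infima coincide by comparing, for each pair of the conditions in Lemma \ref{property of doub w}, the sets of admissible exponents and tracking how an exponent that works for one condition produces an admissible exponent for the next. Write $B$, $G$, $L$, $E$ for the four infima attached to $\beta$, $\gamma$, $\lambda$, $\eta$ respectively. I will establish $B=G$, $G=L$ and $E=B$, which together give the claim.

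First I would relate \eqref{1023-1} and \eqref{1023-2}. For $B\le G$, suppose $\gamma$ satisfies \eqref{1023-2}. For $0\le r\le t<1$, since $(1-s)^{-\gamma}\ge(1-r)^{-\gamma}$ on $[r,t]$,
$$
\frac{\hat\o(r)-\hat\o(t)}{(1-r)^\gamma}\le \int_r^t\frac{\o(s)}{(1-s)^\gamma}\,ds\le\int_0^t\frac{\o(s)}{(1-s)^\gamma}\,ds\lesssim\frac{\hat\o(t)}{(1-t)^\gamma},
$$
and because $(1-r)^\gamma/(1-t)^\gamma\ge1$ this yields $\hat\o(r)/(1-r)^\gamma\lesssim\hat\o(t)/(1-t)^\gamma$, i.e. \eqref{1023-1} with $\beta=\gamma$; hence $B\le\gamma$ and so $B\le G$. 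Conversely, if $\beta$ satisfies \eqref{1023-1} and $\gamma>\beta$, integration by parts gives
$$
\int_0^t\frac{\o(s)}{(1-s)^\gamma}\,ds=\hat\o(0)-\frac{\hat\o(t)}{(1-t)^\gamma}+\gamma\int_0^t\frac{\hat\o(s)}{(1-s)^{\gamma+1}}\,ds,
$$
and inserting $\hat\o(s)\lesssim\hat\o(t)(1-s)^\beta(1-t)^{-\beta}$ from \eqref{1023-1} and using $\gamma>\beta$ to evaluate the last integral shows \eqref{1023-2} holds for $\gamma$. Thus $G\le\beta$, giving $G\le B$, and hence $B=G$.

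Next, for $G=L$ I would pass to polar coordinates in \eqref{1023-3}. Taking $\lambda>0$ and, by rotation, $\xi=|\xi|=x$, the angular integral is evaluated by the first part of Lemma \ref{well-known estimate}, reducing the left side of \eqref{1023-3} to $\int_0^1\o(r)(1-xr)^{-\lambda}\,dr$ up to constants. Splitting at $r=x$ and using $1-xr\thickapprox 1-x$ for $r\ge x$ and $1-xr\thickapprox 1-r$ for $r\le x$ (valid for $x$ near $1$, the remaining range being harmless), the piece over $[x,1]$ is comparable to $\hat\o(x)(1-x)^{-\lambda}$ and the piece over $[0,x]$ to $\int_0^x\o(r)(1-r)^{-\lambda}\,dr$. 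Hence \eqref{1023-3} with exponent $\lambda$ is equivalent to \eqref{1023-2} with $\gamma=\lambda$; the admissible positive exponents coincide and $G=L$.

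Finally, for $E=B$ I would use a moment--tail comparison. Since $\o\in\hat\d$, condition \eqref{1023-1} holds for some fixed $\beta_0$, and from it one obtains $\int_0^1 s^x\o(s)\,ds\thickapprox\hat\o(1-1/x)$ for $x\ge1$: the lower bound comes from restricting the integral to $[1-1/x,1]$ together with $(1-1/x)^x\thickapprox1$, while the upper bound follows by decomposing $[0,1-1/x]$ into the rings $1-s\in[2^j/x,2^{j+1}/x)$, on which $s^x\lesssim e^{-2^j}$, and summing the iterated doubling bound $\hat\o(1-2^{j+1}/x)\lesssim2^{(j+1)\beta_0}\hat\o(1-1/x)$. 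With this comparison, putting $r=1-1/x$, $t=1-1/y$ (so $y/x=(1-r)/(1-t)$) turns \eqref{1023-4} with exponent $\eta$ into $\hat\o(r)/(1-r)^\eta\lesssim\hat\o(t)/(1-t)^\eta$, which is \eqref{1023-1} with $\beta=\eta$; thus the admissible exponents coincide and $E=B$. I expect the main obstacle to be this moment--tail comparison, whose upper estimate needs the dyadic decomposition combined with iterated doubling; the polar reduction for $G=L$ and the integration-by-parts bookkeeping near $r,t\to1$ for $B=G$ are more routine but must be handled with care.
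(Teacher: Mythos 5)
Your proof is correct. The overall strategy---comparing the admissible exponent sets pairwise and funnelling everything through condition (ii)---matches the paper's, and your $G\le B$ step is the same integration by parts as \eqref{7100}. But several individual steps take a genuinely different route. For $B\le G$ you pass from (iii) to (ii) directly, with the \emph{same} exponent, using only the monotonicity of $(1-s)^{-\gamma}$ on $[r,t]$; the paper instead detours through (iv), citing the proof in \cite{Pe} for the inclusion of the $\gamma$-set in the $\lambda$-set and then testing \eqref{1023-3} at a point $\xi$ with $|\xi|=t$ to recover (ii). For $G=L$ your polar-coordinate computation yields a two-sided equivalence of (iii) and (iv) at each fixed positive exponent, which is sharper than the paper's two one-sided arguments. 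Most notably, for $E=B$ you isolate the single comparison $\int_0^1 s^x\o(s)\,ds\thickapprox\hat{\o}(1-1/x)$ for $x\ge 1$ (needing only that $\o\in\hat{\d}$, which is the standing hypothesis) and read off both implications from it; the paper proves the two directions separately (the moment upper bound via the maximum of $x^{\beta+\varepsilon}s^x(1-s)^{\beta+\varepsilon}$ combined with \eqref{7100}, which costs an $\varepsilon$ in the exponent, and the converse via the dyadic points $t_n=1-2^n(1-t)$, essentially your ring decomposition in different clothing). Your organization thus shows that the admissible sets for (ii) and (v) literally coincide, not merely that their infima agree. The only delicate points---the factor $r$ from $dA$ in the polar reduction and the range of $x$ bounded away from $1$---are correctly flagged as harmless and are absorbed by the doubling hypothesis.
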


\begin{proof}
Checking that proof of Lemma \ref{property of doub w}  \cite{Pe},  we get
$$
\{\gamma: \ \gamma\  \text{satisfies (iii) in Lemma \ref{property of doub w}}\}\subseteq  \{\lambda: \ \lambda \  \text{satisfies (iv) in Lemma \ref{property of doub w}}  \},
$$
which yields
\begin{align}\label{1702}
&\inf \{\lambda: \ \lambda \  \text{satisfies (iv) in Lemma \ref{property of doub w}}  \} \nonumber \\
\leq &\inf \{\gamma: \ \gamma\  \text{satisfies (iii) in Lemma \ref{property of doub w}}\}.
\end{align}

Now suppose $\beta$  satisfies  (ii) in Lemma \ref{property of doub w}. Then for any $\epsilon>0$,  $\beta+\epsilon$ also  satisfies  the same property. Hence
 $$
\hat{\o}(0) \lesssim \frac{\hat{\o}(t)}{(1-t)^{\beta+\epsilon}}
$$
for all  $0\leq t<1$. Consequently,
\begin{align}
\int_0^t \frac{\o(s) }{(1-s)^{\beta+\epsilon}}ds
=& \hat{\o}(0)-\frac{\hat{\o}(t)}{(1-t)^{\beta+\epsilon}} +(\beta+\epsilon)\int_0^t \frac{\hat{\o}(s)}{(1-s)^{\beta+\epsilon+1}}ds \nonumber  \\
\lesssim & \frac{\hat{\o}(t)}{(1-t)^{\beta+\epsilon}}+(\beta+\epsilon) \frac{\hat{\o}(t)}{(1-t)^{\beta}} \int_0^t \frac{1}{(1-s)^{\epsilon+1}}ds \nonumber  \\
\lesssim & \left(1+\frac{\beta+\epsilon}{\epsilon}\right) \frac{\hat{\o}(t)}{(1-t)^{\beta+\epsilon}}, \label{7100}
\end{align}
which means that  when $\beta$  satisfies  (ii) in Lemma \ref{property of doub w},
$$
\beta+\epsilon \in \{\gamma: \ \gamma\  \text{satisfies (iii) in Lemma \ref{property of doub w}}  \}
$$
for any $\epsilon>0$.  Thus
\begin{align}\label{1703}
&\inf \{\gamma: \ \gamma\  \text{satisfies (iii) in Lemma \ref{property of doub w}}  \} \nonumber \\
 \leq & \inf\{\beta: \ \beta\  \text{satisfies  (ii) in Lemma \ref{property of doub w}}  \}.
\end{align}

Next let $\lambda$ satisfy  (iv) in Lemma \ref{property of doub w}. Without loss of generality, we can assume $\lambda>0$. In fact, if $\lambda=0$, it is enough to consider the case of $\lambda+\epsilon$ for any $\epsilon>0$.
For $0\leq r\leq t<1$, take $\xi \in \D$ such that $|\xi|=t$. It follows from  (\ref{1023-3}) and Lemma \ref{well-known estimate}  that
\begin{align*}
 \frac{\hat{\o}(t)}{(1-t)^\lambda} \gtrsim \int_\D \frac{\o(z)dA(z)}{|1-\overline{\xi}z|^{\lambda+1}}\gtrsim \int_r^1 \frac{\o(s)}{(1-st)^\lambda}ds\gtrsim \frac{\hat{\o}(r)}{(1-r^2)^\lambda},
\end{align*}
which gives that $\lambda \in \{\beta: \ \beta\  \text{satisfies  (ii) in Lemma \ref{property of doub w}}  \}$. Hence
\begin{align}\label{1704}
&\inf\{\beta: \ \beta\  \text{satisfies  (ii) in Lemma \ref{property of doub w}}  \} \nonumber \\
 \leq & \inf \{\lambda: \ \lambda \  \text{satisfies (iv) in Lemma \ref{property of doub w}}  \}.
\end{align}

By    (\ref{1702}), (\ref{1703}) and (\ref{1704}), we get
\begin{align}
  &\inf\{\beta: \ \beta\  \text{satisfies  (ii) in Lemma \ref{property of doub w}}  \} \nonumber \\
=&\inf \{\gamma: \ \gamma\  \text{satisfies (iii) in Lemma \ref{property of doub w}}  \} \label{1705} \\
=&\inf \{\lambda: \ \lambda \  \text{satisfies (iv) in Lemma \ref{property of doub w}}  \} \nonumber.
\end{align}

Let $\beta$ satisfy  (ii) in Lemma \ref{property of doub w}, so is $\beta+\varepsilon$ for any $\varepsilon>0$.  Consider  $2<x\leq y<\infty$ first. For $0<s<1$ the function $f_{x, \beta+\varepsilon}(s)=x^{\beta+\varepsilon} s^x(1-s)^{\beta+\varepsilon}$ takes its maximum at $s=x/(x+\beta+\varepsilon)$.
Thus when $s\in(0,1)$,
$$
x^{\beta+\varepsilon} s^x(1-s)^{\beta+\varepsilon}\leq  \left(\frac{x}{x+\beta+\varepsilon}\right)^x\left(\frac{x(\beta+\varepsilon)}{x+\beta+\varepsilon}\right)^{\beta+\varepsilon} \leq  (\beta+\varepsilon)^{\beta+\varepsilon}.
$$
Combining this with (\ref{7100}) and  (\ref{1023-1}), we deduce
\begin{align*}
&x^{\beta+\varepsilon} \int_0^1 s^x\o(s)ds\\
=&x^{\beta+\varepsilon} \int_{1-\frac{1}{x}}^1 s^x\o(s)ds +x^{\beta+\varepsilon} \int_0^{1-\frac{1}{x}}s^x\o(s)ds\\
\lesssim & \frac{\hat{\o}(1-\frac{1}{x})}{(1-(1-\frac{1}{x}))^{\beta+\varepsilon}}  +\int_0^{1-\frac{1}{x}}x^{\beta+\varepsilon}s^x(1-s)^{\beta+\varepsilon} \frac{\o(s)}{(1-s)^{\beta+\varepsilon}}ds\\
\lesssim &  \frac{\hat{\o}(1-\frac{1}{x})}{(1-(1-\frac{1}{x}))^{\beta+\varepsilon}}  +\int_0^{1-\frac{1}{x}} \frac{\o(s)}{(1-s)^{\beta+\varepsilon}}ds\\
\lesssim  & \frac{\hat{\o}(1-\frac{1}{x})}{(1-(1-\frac{1}{x}))^{\beta+\varepsilon}}
\lesssim \frac{\hat{\o}(1-\frac{1}{y})}{(1-(1-\frac{1}{y}))^{\beta+\varepsilon}}.
\end{align*}
Also,
\begin{align*}
 \frac{\hat{\o}(1-\frac{1}{y})}{(1-(1-\frac{1}{y}))^{\beta+\varepsilon}}
\thickapprox y^{\beta+\varepsilon}\int_{1-\frac{1}{y}}^1 s^y\o(s)ds
\lesssim y^{\beta+\varepsilon} \int_0^1 s^y\o(s)ds.
\end{align*}
Thus for $2<x\leq y<\infty$, one gets
\begin{equation}\label{1706}
 \int_0^1 s^x\o(s)ds \lesssim \left(\frac{y}{x}\right)^{\beta+\varepsilon} \int_0^1 s^y\o(s)ds.
 \end{equation}
When $0<x \leq y\leq 2$, (\ref{1706}) also  holds because
$$
\int_0^1 s^x\o(s)ds \thickapprox \int_0^1 s^y \o(s)ds\thickapprox \int_0^1 \o(s)ds.
 $$
When $0<x \leq 2<y<\infty$,  there is a small enough positive constant $c$ with $2<2+c<y$. Then
\begin{align*}
\int_0^1 s^x\o(s)ds\thickapprox \int_0^1 s^{2+c}\o(s)ds&\lesssim \left(\frac{y}{2+c}\right)^{\beta+\varepsilon}\int_0^1 s^y\o(s)ds\\
&\lesssim \left(\frac{y}{x}\right)^{\beta+\varepsilon}\int_0^1 s^y\o(s)ds.
\end{align*}
Consequently, $\beta+\varepsilon \in \{\eta: \ \eta \  \text{satisfies (v) in Lemma \ref{property of doub w}}  \}$. Then
\begin{align}\label{1707}
&\inf \{\eta: \ \eta \  \text{satisfies (v) in Lemma \ref{property of doub w}}  \} \nonumber \\
 \leq & \inf\{\beta: \ \beta\  \text{satisfies  (ii) in Lemma \ref{property of doub w}}  \}.
\end{align}
Conversely, suppose (v) in Lemma \ref{property of doub w} holds for some positive constant $\eta$. Then $\o\in\hD$ and hence there exits a positive constant $C=C(\o)>1$ such that
$$\hat{\o}(r)\leq C \hat{\o}\left(\frac{r+1}{2}\right), $$
for all $r\in(0,1)$.  Let $t\in(\frac{3}{4},1)$ and  $t_n=1-2^n(1-t)$, $n=0,1,\cdots,N-1$. Here $N=N(t)$ is the largest positive integer  such that $t_{N-1}> 0$. We set $t_N=0$.
Then
\begin{align*}
\int_0^t s^\frac{1}{1-t}\o(s)ds
&\thickapprox\sum_{n=0}^{N-1}\int_{t_{n+1}}^{t_n} s^\frac{1}{1-t}\o(s)ds\\
&\lesssim \sum_{n=0}^{N-1}\left(t_n^\frac{1}{1-t}C^{n}\hat{\o}(t) \right)\\
&\lesssim\left(\sum_{n=0}^{N-1}2^{-2^n}C^{n}\right)\hat{\o}(t)
\lesssim \hat{\o}(t),
\end{align*}
where we use
\begin{align*}
t_n^\frac{1}{1-t}
&=\left(\big(1-2^n(1-t)\big)^\frac{1}{2^n(1-t)}\right)^{2^n}\lesssim  2^{-2^n}
\end{align*}
for all $n=0,1,\cdots,N$ and $t\in(\frac{3}{4},1)$.
Hence,
\begin{align*}
\int_0^1 s^\frac{1}{1-t}\o(s)ds
=\int_0^t  s^\frac{1}{1-t}\o(s)ds +\int_t^1 s^\frac{1}{1-t}\o(s)ds
\lesssim \hat{\o}(t).
\end{align*}
Hence, for  $\frac{4}{3}\leq x\leq y<\infty$, we get
\begin{align*}
\frac{\hat{\o}(1-\frac{1}{x})}{(1-(1-\frac{1}{x}))^\eta}
&\lesssim x^\eta\int_0^1 s^x\o(s)ds\lesssim y^\eta\int_0^1 s^y\o(s)ds\\
&=\frac{\int_0^1 s^\frac{1}{1-(1-\frac{1}{y})}\o(s)ds}{(1-(1-\frac{1}{y}))^\eta}
\lesssim \frac{\hat{\o}(1-\frac{1}{y})}{(1-(1-\frac{1}{y}))^\eta},
\end{align*}
which gives
 $$
  \frac{\hat{\o}(r)}{(1-r)^\eta}\lesssim \frac{\hat{\o}(t)}{(1-t)^\eta}
$$
  for all  $1/4\leq r\leq t<1$. Joining this with some  elementary estimates, we see that $\eta$ satisfies  (\ref{1023-1}) for all $0\leq r\leq t<1$. This yields
\begin{align}\label{1708}
& \inf\{\beta: \ \beta\  \text{satisfies  (ii) in Lemma \ref{property of doub w}}  \}  \nonumber \\
 \leq & \inf \{\eta: \ \eta \  \text{satisfies (v) in Lemma \ref{property of doub w}}  \}.
\end{align}

By (\ref{1705}), (\ref{1707}) and (\ref{1708}), we get the desired result.
\end{proof}

For $\o \in \hat \d$, denoted by $U(\o)$ the infimum in Lemma \ref{Inf U}.
Generally speaking,  (\ref{1023-1}), (\ref{1023-2}), (\ref{1023-3}) and (\ref{1023-4}) are not true when $\beta=\gamma=\lambda=\eta=U(\o)$.
For example, when $\o(z)=1$, (\ref{1023-2}) and (\ref{1023-3}) do not hold when $\gamma=\lambda=U(\o)=1$.

For a radial weight $\o$, by \cite[Lemma B]{PRS},   $\o\in\check{\mathcal{D}}$ if and only if there exists  $\alpha=\alpha(\o)>0$ such that
\begin{equation}\label{proper L(w)}
\frac{\hat{\o}(t)}{(1-t)^\alpha}\lesssim \frac{\hat{\o}(r)}{(1-r)^\alpha}
\end{equation}
 for all $0\leq r \leq t<1$.
Let $L(\o)$ be the supremum of the set of these parameters  $\alpha$.
There also  exists $\o\in \check{\mathcal{D}}$ such that (\ref{proper L(w)}) does not hold when $\alpha=L(\o)$.
For example, set  $\o(t)=(1-t)\left(2\log\frac{e}{1-t}-1\right)$.  Then  $\hat{\o}(t)= (1-t)^2\log\frac{e}{1-t}$ and $L(\o)=2$
, but (\ref{proper L(w)}) does not hold for $\alpha=2$ and this $\hat{\o}$.

Note that   $\d= \hat{\d} \cap \check{\d}$. Then for any $\o \in \d$, both $U(\o)$ and  $L(\o)$ are well defined. It is also clear that  $0<L(\o)\leq U(\o)<\infty.$

\subsection{Double integral estimates for $B_p(\o)$ spaces}  In this subsection, we give a complete description of radial weights  $\o$ such that double integral estimates for $B_p(\o)$ spaces hold.

For real parameters $p$ and  nonnegative functions  $\o$ on  $\D$,  we will write  $\o_{[p]}(z)=(1-|z|^2)^p \o(z)$ and $dA_p(z)=(1-|z|^2)^p dA(z)$  for convenience.
Now  we give Theorem \ref{1main},  one side of  double integral estimates for $B_p(\o)$ which  always holds. It is worth mentioning that  the weight function $\o$ in the following theorem is not necessarily radial.
\begin{theor} \label{1main}
Suppose  $p>0$,  $\tau>-1$ and  $\o$ is  a nonnegative function on $\D$. Let  $\sigma$ be a real number  such that $\o_{[\sigma]}$ is a weight. Then
{\small
\begin{eqnarray}\label{0213-1}
\int_{\D}\int_{\D}\frac{|f(z)-f(\zeta)|^p}{|1-\overline{\zeta}z|^{4+\tau+\sigma}}\o(\zeta)dA_\tau(z)d A_\sigma(\zeta)
\gtrsim  \ind |f'(z)|^p \o(z) dA_{p-2}(z)
\end{eqnarray}
}
for all $f\in H(\D)$.
\end{theor}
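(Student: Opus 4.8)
The plan is to prove (\ref{0213-1}) by establishing a pointwise lower bound for the inner integral in $z$ and then integrating in $\zeta$ against the measure $(1-|\zeta|^2)^\sigma\o(\zeta)\,dA(\zeta)$. Writing
$$
I(\zeta)=\ind \frac{|f(z)-f(\zeta)|^p}{|1-\overline{\zeta}z|^{4+\tau+\sigma}}(1-|z|^2)^\tau\,dA(z),
$$
the left-hand side of (\ref{0213-1}) equals $\int_\D I(\zeta)(1-|\zeta|^2)^\sigma\o(\zeta)\,dA(\zeta)$; the integrand is nonnegative, so Tonelli applies and no radial hypothesis on $\o$ is required. It therefore suffices to prove the pointwise estimate
$$
I(\zeta)\gtrsim |f'(\zeta)|^p(1-|\zeta|^2)^{p-2-\sigma}\qquad(\zeta\in\D),
$$
because multiplying by $(1-|\zeta|^2)^\sigma\o(\zeta)$ and integrating then recovers exactly $\ind|f'(\zeta)|^p(1-|\zeta|^2)^{p-2}\o(\zeta)\,dA(\zeta)$.

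To obtain the pointwise bound I would fix $r\in(0,1)$ and discard all of the $z$-integral except the contribution of the pseudo-hyperbolic disk $\Delta(\zeta,r)$. There the standard comparisons $1-|z|^2\thickapprox 1-|\zeta|^2\thickapprox|1-\overline{\zeta}z|$ hold with constants depending only on $r$, so
$$
I(\zeta)\gtrsim (1-|\zeta|^2)^{-4-\sigma}\int_{\Delta(\zeta,r)}|f(z)-f(\zeta)|^p\,dA(z).
$$
I would then apply the change of variables $z=\vp_\zeta(w)$, under which $\Delta(\zeta,r)$ becomes $\{|w|<r\}$ and $dA(z)=\frac{(1-|\zeta|^2)^2}{|1-\overline{\zeta}w|^4}\,dA(w)$. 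Setting $G=f\circ\vp_\zeta-f(\zeta)$ gives $G(0)=0$ and $|G'(0)|=|f'(\zeta)|(1-|\zeta|^2)$; since $|1-\overline{\zeta}w|\le 1+r$ on $\{|w|<r\}$, the Jacobian is bounded below by a constant multiple of $(1-|\zeta|^2)^2$, so
$$
\int_{\Delta(\zeta,r)}|f(z)-f(\zeta)|^p\,dA(z)\gtrsim (1-|\zeta|^2)^2\int_{|w|<r}|G(w)|^p\,dA(w).
$$

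The crux of the argument, and the step I expect to be the main obstacle (particularly for $0<p<1$, where point evaluations are no longer controlled by duality), is the elementary lemma asserting that for analytic $G$ with $G(0)=0$,
$$
|G'(0)|^p\lesssim \int_{|w|<r}|G(w)|^p\,dA(w).
$$
I would prove this by passing to $H(w)=G(w)/w$, which is analytic with $H(0)=G'(0)$, so that $|H|^p$ is subharmonic. Applying the sub-mean-value inequality on circles of radius $\rho$ and integrating $\rho$ over the annulus $r/2<\rho<r$ turns the value $|H(0)|^p=|G'(0)|^p$ into an area integral of $|H|^p$ over $\{r/2<|w|<r\}$, where $|H(w)|^p\le (2/r)^p|G(w)|^p$; this removes the singular factor and yields the claim. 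Combining the three displayed inequalities gives $\int_{\Delta(\zeta,r)}|f(z)-f(\zeta)|^p\,dA(z)\gtrsim (1-|\zeta|^2)^{p+2}|f'(\zeta)|^p$, hence $I(\zeta)\gtrsim (1-|\zeta|^2)^{p-2-\sigma}|f'(\zeta)|^p$, and integrating in $\zeta$ completes the proof. I note that $\tau>-1$ is not actually needed for this direction, since the estimate uses only the values of the integrand on $\Delta(\zeta,r)$.
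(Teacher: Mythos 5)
Your proof is correct, but it takes a more elementary route than the paper's. The paper proves the same pointwise lower bound $J_1(\zeta)\gtrsim(1-|\zeta|^2)^{p-2}|f'(\zeta)|^p$, but it does so by changing variables $z=\vp_\zeta(u)$ over the \emph{whole} disk and then invoking the Littlewood--Paley lower estimate of Theorem \ref{1Bao etac} (from \cite{BWZ}) for Bergman spaces with the nonradial weights $\o_\zeta(u)=(1-|u|^2)^\tau/|1-\ol{\zeta}u|^{\tau-\sigma}$, after which it restricts to $\Delta(\zeta,1/2)$ and applies the sub-mean-value property to $|f'|^p$. You instead discard everything outside $\Delta(\zeta,r)$ at the outset and reduce to the elementary inequality $|G'(0)|^p\lesssim\int_{|w|<r}|G|^p\,dA$ for analytic $G$ with $G(0)=0$, proved via subharmonicity of $|G(w)/w|^p$ --- which is valid for all $p>0$, so your worry about $0<p<1$ is unfounded; this step is sound. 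What your approach buys is self-containedness: it bypasses the external Littlewood--Paley machinery entirely, and it makes transparent the paper's own remark that $\tau>-1$ plays no role in this direction. What it gives up is the intermediate full-disk estimate (\ref{18-2}), $J_1(\zeta)\gtrsim\int_\D|f'(z)|^p(1-|\zeta|^2)^\tau|1-\ol{\zeta}z|^{-2\tau-4}\,dA_{\tau+p}(z)$, which the paper extracts from this same argument and reuses in the proof of Theorem \ref{2main}(i); your localized version does not yield that stronger statement, though it is not needed for Theorem \ref{1main} itself. All exponent bookkeeping in your argument checks out.
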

\begin{proof}
For $f\in H(\D)$ and $\zeta\in \D$, by a change of variable $z=\vp_\zeta(u)$, we get
\begin{align}
J_1(\zeta):&=(1-|\zeta|^2)^\sigma\int_{\D}\frac{|f(z)-f(\zeta)|^p}{|1-\overline{\zeta}z|^{4+\tau+\sigma}} d A_\tau(z)  \nonumber\\
&=\frac{1}{(1-|\zeta|^2)^2}\int_\D \frac{ |f\circ\vp_\zeta(u)-f\circ\vp_\zeta(0)|^p }{|1-\ol{\zeta}u|^{\tau-\sigma}}dA_\tau (u).   \label{0213-2}
\end{align}
  From Proposition 4.5 and Lemma 4.30 in \cite{Zhu},
\begin{align}\label{0213-3}
|1-\ol{\zeta}u|\thickapprox |1-\ol{\zeta}\eta|, 1-|u|^2\thickapprox 1-|\eta|^2\thickapprox |1-\ol u \eta|,
\end{align}
 for all $\zeta\in \D$, and all  $u$ and $\eta$ in $\D$ satisfying  $\rho(u,\eta)<1/2$.
 The comparison constants in (\ref{0213-3}) are independent of $\zeta$, $u$ and $\eta$.  These facts allow us to apply   a Littlewood-Paley estimate for Bergman spaces  with nonradial weights $\o_\zeta(u)=(1-|u|^2)^\tau/ {|1-\ol{\zeta}u|^{\tau-\sigma}}$ (see  Theorem \ref{1Bao etac} and its proof in \cite{BWZ}). We get
\begin{align}\label{0213-4}
&\int_\D \frac{ |f\circ\vp_\zeta(u)-f\circ\vp_\zeta(0)|^p }{|1-\ol{\zeta}u|^{\tau-\sigma}}dA_\tau (u) \nonumber \\
\geq &C \int_\D \frac{ |(f\circ\vp_\zeta)^\p (u)|^p }{|1-\ol{\zeta}u|^{\tau-\sigma}}dA_{\tau+p} (u),
\end{align}
where $C$ is a positive constant independent of $\zeta$ and $f$.
Bearing in mind (\ref{0213-2}), (\ref{0213-3}),  (\ref{0213-4}), the change of variable with $u=\vp_\zeta(z)$ and the sub-mean value property for $|f|^p$, we deduce
\begin{align*}
J_1(\zeta)&\gtrsim \frac{1}{(1-|\zeta|^2)^2}\int_\D \frac{ |(f\circ\vp_\zeta)^\p (u)|^p}{ |1-\ol{\zeta}u|^{\tau-\sigma}}dA_{\tau+p} (u)\\
&\thickapprox \int_\D |f^\p(z)|^p\frac{(1-|\zeta|^2)^\sigma }{|1-\ol{\zeta}z|^{\sigma+\tau+4}} dA_{\tau+p}(z) \\
&\gtrsim (1-|\zeta|^2)^{p-4}\int_{\Delta(\zeta, 1/2)} |f^\p(z)|^p dA(z) \\
&\gtrsim (1-|\zeta|^2)^{p-2}|f^\p(\zeta)|^p
\end{align*}
for all $\zeta \in \D$. This implies  (\ref{0213-1}).  The proof is complete.
\end{proof}
The conditions  of $\tau$  and $\sigma$ in Theorem \ref{1main} are only used to ensure the convergence of the integrals in the left-hand of (\ref{0213-1}).

Now we give the other side of double integral estimates for $B_p(\o)$ spaces as follows.

\begin{theor}\label{2main1}
Suppose $p>0$ and $\op$  is a  radial weight. Then the following conditions are equivalent:
\begin{enumerate}[(i)]
  \item   there exist real numbers $\sigma$ and $\tau$ such that
  \begin{eqnarray}\label{18-1}
\int_{\D}\int_{\D}\frac{|f(z)-f(\zeta)|^p}{|1-\overline{\zeta}z|^{4+\tau+\sigma}}\o(\zeta)dA_\tau(z)A_\sigma(\zeta)
\lesssim \ind |f'(z)|^p  \o(z) dA_{p-2}(z)
\end{eqnarray}
for all $f\in H(\D)$;
  \item   $\op\in\hD$.
\end{enumerate}
\end{theor}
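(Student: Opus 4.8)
The plan is to route both implications through a single \emph{unconditional} reduction. For $\sigma,\tau$ in the range where the left-hand integral converges, I would first show that
\begin{align*}
\int_{\D}\int_{\D}\frac{|f(z)-f(\zeta)|^p}{|1-\ol{\zeta}z|^{4+\tau+\sigma}}\o(\zeta)\,dA_\tau(z)\,dA_\sigma(\zeta)\thickapprox\ind|f'(z)|^p\,d\nu(z),\qquad f\in H(\D),
\end{align*}
where
\begin{align*}
d\nu(z)=(1-|z|^2)^{\tau+p}\left(\ind\frac{(1-|\zeta|^2)^\sigma\o(\zeta)}{|1-\ol{\zeta}z|^{4+\tau+\sigma}}\,dA(\zeta)\right)dA(z).
\end{align*}
This follows exactly as in the proof of Theorem \ref{1main}: the change of variable $z=\vp_\zeta(u)$ gives the identity (\ref{0213-2}) for $J_1(\zeta)$, and then applying the \emph{two-sided} Littlewood--Paley estimates (Theorems \ref{1Bao etac} and \ref{2Bao etac}) to $g=f\circ\vp_\zeta$ with the nonradial weight $\o_\zeta(u)=(1-|u|^2)^\tau/|1-\ol{\zeta}u|^{\tau-\sigma}$ yields $J_1(\zeta)\thickapprox(1-|\zeta|^2)^\sigma\ind|f'(z)|^p(1-|z|^2)^{\tau+p}|1-\ol{\zeta}z|^{-(4+\tau+\sigma)}\,dA(z)$. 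The key point is that the hypothesis of Theorem \ref{2Bao etac} for $\o_\zeta$ is \emph{precisely} the Forelli--Rudin estimate of Lemma \ref{Forelli-Rudin type  estimates}, applied uniformly in $\zeta$, while the lower estimate (Theorem \ref{1Bao etac}) uses only the local comparability (\ref{0213-3}). Integrating in $\zeta$ against $\o\,dA$ and using Fubini's theorem produces the displayed comparison.

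Once this is in place, taking $g=f'$ (which ranges over all of $H(\D)$) shows that estimate (\ref{18-1}) in (i) is equivalent to
\begin{align*}
\ind|g|^p\,d\nu(z)\lesssim\ind|g|^p\op(z)\,dA(z)\thickapprox\|g\|_{A^p_{\op}}^p,\qquad g\in H(\D),
\end{align*}
that is, to the statement that $d\nu$ is a $p$-Carleson measure for the weighted Bergman space $A^p_{\op}$. Thus the whole theorem reduces to showing that, for this particular $\nu$, the measure $d\nu$ is a $p$-Carleson measure for $A^p_{\op}$ if and only if $\op\in\hD$.

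For (ii)$\Rightarrow$(i) I would assume $\op\in\hD$ and invoke Theorem \ref{CM for Apw}: it then suffices to verify the Carleson-box condition $\nu(S(a))\lesssim\int_{S(a)}\op\,dA$. Writing $(1-|\zeta|^2)^\sigma\o(\zeta)=(1-|\zeta|^2)^{\sigma+2-p}\op(\zeta)$ and estimating the inner integral in the definition of $\nu$ by the doubling estimate (\ref{1023-3}) (for an exponent $\lambda\ge U(\op)$, which is what pins down the admissible range of $\sigma,\tau$ through Lemma \ref{Inf U}), the quantity $\nu(S(a))$ collapses, after integrating in $z$ over $S(a)$ and using $\int_{S(a)}\op\,dA\thickapprox(1-|a|)\widehat{\op}(|a|)$, to something controlled by $\int_{S(a)}\op\,dA$; the doubling properties (\ref{1023-1})--(\ref{1023-4}) are used to absorb the various powers of $(1-|a|)$. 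This establishes (i) for every admissible pair.

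The implication (i)$\Rightarrow$(ii) is where I expect the main difficulty, precisely because Theorem \ref{CM for Apw} presupposes $\op\in\hD$ and so cannot be run backwards. Here I would test the Carleson condition $\ind|g|^p\,d\nu\lesssim\ind|g|^p\op\,dA$ on the family $g_a(z)=(1-\ol{a}z)^{-b}$, $a\in\D$, with $b$ large. Both sides are computable by Fubini together with Lemma \ref{Forelli-Rudin type  estimates} and the basic estimates of Lemma \ref{well-known estimate}: the right-hand side is $\ind\op(z)|1-\ol{a}z|^{-bp}\,dA(z)$, while the left-hand side, after evaluating the inner $z$-integral, reduces to a comparable weighted integral of $\op$ against a different kernel. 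Matching the growth in $a$ should then force one of the equivalent doubling conditions of Lemma \ref{property of doub w}, with condition (v) (equivalently (\ref{1023-3})) emerging most directly, whence $\op\in\hD$. The delicate part is to carry this out for an \emph{arbitrary} admissible pair $(\sigma,\tau)$ rather than a conveniently chosen one, and to confirm that the inequality produced is genuinely \emph{equivalent} to doubling and not merely implied by it: a crude test on the indicator of $S(a)$ only yields the doubling condition when $\sigma+2-p<0$, so the sharper kernel test functions $g_a$ are essential.
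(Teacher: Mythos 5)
Your reduction of (\ref{18-1}) to the statement that a measure of the form $d\nu(z)=(1-|z|^2)^{\tau+p}\bigl(\int_\D(1-|\zeta|^2)^\sigma\o(\zeta)|1-\ol{\zeta}z|^{-(4+\tau+\sigma)}dA(\zeta)\bigr)dA(z)$ is a $p$-Carleson measure for $A^p_{\op}$ is sound in spirit, and your treatment of (ii)$\Rightarrow$(i) (verify the Carleson box condition for $\nu$ via Theorem \ref{CM for Apw}, with the admissible range of $\sigma,\tau$ controlled by $U(\op)$) is essentially the paper's argument, modulo the unexecuted $\nu(S(a))$ computation. Two caveats there: your assertion that the hypothesis of Theorem \ref{2Bao etac} for $\o_\zeta(u)=(1-|u|^2)^\tau|1-\ol{\zeta}u|^{\sigma-\tau}$ is ``precisely'' Lemma \ref{Forelli-Rudin type  estimates} is only correct when $\sigma<\tau$ (the lemma requires the exponent $t=\tau-\sigma$ to be positive); for $\sigma\ge\tau$ the paper instead discards the factor $|1-\ol{\zeta}u|^{\sigma-\tau}\le 2^{\sigma-\tau}$ and lands on a \emph{larger} comparison measure, so your claimed two-sided identity with this exact $\nu$ needs a separate argument in that range. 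This is harmless, since for sufficiency you may simply choose $\sigma<\tau$, and for necessity only the lower bound (Theorem \ref{1main}) is used.

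The genuine gap is in (i)$\Rightarrow$(ii), and you have correctly located it but not closed it. Testing the Carleson inequality on $g_a(z)=(1-\ol{a}z)^{-b}$ is circular here: the right-hand side becomes $\int_\D\op(z)|1-\ol{a}z|^{-bp}dA(z)$, and bounding that by $(1-|a|)^{1-bp}\widehat{\op}(|a|)$ --- which is what you would need in order to read off a doubling statement --- is itself condition (iv) of Lemma \ref{property of doub w} for $\op$, i.e.\ exactly the conclusion you are trying to reach; without it the right-hand side may be dominated by mass of $\op$ far from $\partial\D$ and the tested inequality carries no boundary information. The paper avoids this by testing with $f'(z)=z^n$: radiality of $\o$ makes both sides pure moments, and the kernel lower bound $\int_0^1 t^{np+1}(1-t)^{p+\tau}(1-tr)^{-2\tau-3}dt\gtrsim r^{(np+1)/2}(1-r)^{p-\tau-2}$ (obtained by restricting $t$ to $[\sqrt{r},\tfrac{1+\sqrt{r}}{2}]$) converts the double integral into the moment of $\op$ at \emph{half} the exponent, yielding $\int_0^1 r^{(np+1)/2}\op(r)dr\le M\int_0^1 r^{np+1}\op(r)dr$. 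Iterating this ``exponent-doubling'' inequality gives condition (v) of Lemma \ref{property of doub w} and hence $\op\in\hD$. That square-root localisation in the radial variable is the missing idea; your boundary-localised test family does not exploit the radiality of $\o$ and does not obviously produce any of the equivalent conditions of Lemma \ref{property of doub w}.
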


To understand well the existence  of parameters $\sigma$ and $\tau$ in Theorem \ref{2main1}, we prove the following result which implies Theorem \ref{2main1}.

\begin{theor}\label{2main}
Suppose $p>0$ and $\o$ is a radial nonnegative function on $\D$. Then the following statements hold:
\begin{itemize}
  \item [(i)] if $\op$  is a  weight and there exist real numbers $\sigma$ and $\tau$ such that
 (\ref{18-1}) holds for all $f\in H(\D)$, then $\op\in\hD$;
  \item [(ii)] if $\op\in\hD$, then (\ref{18-1}) holds for all $f\in H(\D)$ when
  $$\min\{\sigma,\tau\}>p-2,  \,\,\,\tau>\max\{U(\op)-p-1, -1\}.$$
\end{itemize}
\end{theor}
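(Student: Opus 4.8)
The plan is to recast the two-sided content of (\ref{18-1}) as a Carleson-measure embedding and then read off $\op\in\hD$ from the characterizations of doubling weights already recorded. Writing $J_1(\zeta)$ as in the proof of Theorem \ref{1main}, that theorem already supplies the lower bound $J_1(\zeta)\gtrsim\ind|f^\p(z)|^p\frac{(1-|\zeta|^2)^\sigma(1-|z|^2)^{\tau+p}}{|1-\ol\zeta z|^{\sigma+\tau+4}}\,dA(z)$; the matching upper bound comes from the reverse Littlewood--Paley estimate, Theorem \ref{2Bao etac}, applied after the change of variable $z=\vp_\zeta(u)$ to the nonradial weight $\o_\zeta(u)=(1-|u|^2)^\tau/|1-\ol\zeta u|^{\tau-\sigma}$. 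The hypothesis of Theorem \ref{2Bao etac} for $\o_\zeta$ is verified uniformly in $\zeta$ by the Forelli--Rudin estimate Lemma \ref{Forelli-Rudin type  estimates}, and this is precisely where $\min\{\sigma,\tau\}>p-2$ and $\tau>-1$ enter. Consequently $J_1(\zeta)\thickapprox\ind|f^\p(z)|^p\frac{(1-|\zeta|^2)^\sigma(1-|z|^2)^{\tau+p}}{|1-\ol\zeta z|^{\sigma+\tau+4}}\,dA(z)$ with constants independent of $\zeta$. Multiplying by $\o(\zeta)$, integrating in $\zeta$ and applying Fubini, (\ref{18-1}) is recast as the embedding $\ind|f^\p|^p\,d\nu\lesssim\ind|f^\p|^p\op\,dA$, where $d\nu(z)=(1-|z|^2)^{\tau+p}G(z)\,dA(z)$ and $G(z)=\ind\frac{(1-|\zeta|^2)^\sigma\o(\zeta)}{|1-\ol\zeta z|^{\sigma+\tau+4}}\,dA(\zeta)$. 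Since (\ref{18-1}) depends on $f$ only through $f^\p$, which runs over all of $H(\D)$, and since $\ind|f^\p|^p\op\,dA=\|f^\p\|_{A^p_{\op}}^p$, this says exactly that $\nu$ is a $p$-Carleson measure for $A^p_{\op}$.

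For (ii), assume $\op\in\hD$. Then Theorem \ref{CM for Apw} reduces the embedding to the single box condition $\nu(S(a))\lesssim\int_{S(a)}\op\,dA$ for all $a$. I would compute $\nu(S(a))$ by Fubini, estimate the inner integral over $S(a)$ by Lemma \ref{well-known estimate}, and control the outer $\zeta$-integral by a condition-(\ref{1023-3})-type estimate for doubling weights with exponent $\tau+p+1$; this integral converges and yields $\nu(S(a))\lesssim(1-|a|)\hat{\op}(|a|)\thickapprox\op(S(a))$ exactly when $\tau+p+1>U(\op)$, i.e. $\tau>U(\op)-p-1$, which together with $\tau>-1$ is the stated range (the value $U(\op)$ being the common infimum of Lemma \ref{Inf U}). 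It is worth stressing that the naive pointwise domination $(1-|z|^2)^{\tau+p}G(z)\lesssim\op(z)$ is \emph{false} for general doubling weights, so it is the Carleson-measure reformulation, rather than a pointwise bound, that makes the argument succeed.

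For (i), only the always-valid lower bound of Theorem \ref{1main} is needed: combined with (\ref{18-1}) it gives $\ind|f^\p|^p\,d\nu\lesssim\ind|f^\p|^p\op\,dA$ for all $f$, so $\nu$ is again a $p$-Carleson measure for $A^p_{\op}$, now as a \emph{consequence} of the hypothesis (this uses only $\tau>-1$ and that $\op$ is a weight, no doubling assumption). Testing the embedding with the standard concentrated functions $g_a(z)=\bigl((1-|a|^2)/(1-\ol a z)\bigr)^{M}$ for large $M$ forces a box-type lower bound; unwinding $\nu(S(a))$ then shows that its non-local part (the contribution of $\zeta$ far from $a$) dominates a condition-(\ref{1023-3})-type integral for $\op$, and by mimicking the implication (\ref{1023-3})$\Rightarrow$(\ref{1023-1}) carried out in the proof of Lemma \ref{Inf U} one extracts the doubling of $\hat{\op}$, that is $\op\in\hD$.

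The main obstacle is the necessity direction (i). The delicate point is that the \emph{local} part of $\nu(S(a))$, namely the contribution of $\zeta\in S(a)$, is bounded by $\op(S(a))$ for \emph{every} radial weight and hence carries no information about doubling; the doubling of $\hat{\op}$ is forced solely by the tail $\zeta$ away from $a$, so one must isolate this tail cleanly and match it to a characterization in Lemma \ref{property of doub w}. A secondary technical hurdle is the uniform-in-$\zeta$ verification of the hypothesis of Theorem \ref{2Bao etac} for the family of nonradial weights $\o_\zeta$, which is exactly what the Forelli--Rudin estimate Lemma \ref{Forelli-Rudin type  estimates} is set up to handle.
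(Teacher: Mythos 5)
Your part (ii) is essentially the paper's argument: the same upper bound for the inner integral $J_1(\zeta)$ via Theorem \ref{2Bao etac} (with Lemma \ref{Forelli-Rudin type  estimates} verifying its hypothesis uniformly in $\zeta$ for the nonradial weights $\o_\zeta$), the same recasting as a Carleson embedding for $A^p_{\op}$, and the same box computation via Theorem \ref{CM for Apw}, with $\tau>U(\op)-p-1$ controlling the contribution of $\zeta$ far from $a$ and $\min\{\sigma,\tau\}>p-2$ the contribution of $\zeta$ near $a$. The genuine gap is in part (i). Testing $\ind|f'|^p\,d\nu\lesssim\ind|f'|^p\op\,dA$ with $f'=g_a=\bigl((1-|a|^2)/(1-\ol{a}z)\bigr)^{M}$ does not produce the box inequality $\nu(S(a))\lesssim(1-|a|)\widehat{\op}(|a|)$ your sketch relies on, because for a radial weight not yet known to be doubling one only has
\begin{align*}
\ind|g_a|^p\op\,dA\thickapprox(1-|a|)\widehat{\op}(|a|)+(1-|a|)^{Mp}\int_0^{|a|}\frac{\op(r)}{(1-r)^{Mp-1}}\,dr,
\end{align*}
and the second term is precisely a quantity of type (\ref{1023-2}) whose control by $(1-|a|)\widehat{\op}(|a|)$ is \emph{equivalent} to the doubling you are trying to prove. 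Writing $\Phi_\gamma(a)=(1-|a|)^{\gamma}\int_0^{|a|}\op(r)(1-r)^{-\gamma}dr$, the non-local part of $\nu(S(a))$ you propose as a lower bound is comparable to $(1-|a|)\Phi_{\tau+p+1}(a)$, so the tested inequality reads $\Phi_{\tau+p+1}(a)\lesssim\widehat{\op}(|a|)+\Phi_{Mp-1}(a)$. Since $(1-r)\ge(1-|a|)$ on the range of integration, $\Phi_\gamma(a)$ is nonincreasing in $\gamma$; hence for every choice of $M$ the term $\Phi_{Mp-1}$ either dominates the left-hand side outright or is majorized by it without a small constant, and in neither case can it be absorbed. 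Your own (correct) observation that the local part of $\nu(S(a))$ carries no information about doubling applies with equal force to the local part of the \emph{right-hand} side of the tested inequality, and this is what the concentrated-test-function strategy cannot escape.

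The paper avoids this circularity by testing with the global functions $f'(z)=z^n$ rather than with localized ones. Integrating the kernel in polar coordinates and restricting the radial variable to $[\sqrt r,(1+\sqrt r)/2]$ yields the moment inequality
\begin{align*}
\int_0^1 r^{\frac{np+1}{2}}\op(r)\,dr\le C\int_0^1 r^{np+1}\op(r)\,dr
\end{align*}
for all positive integers $n$, which compares the $x$-th moment of $\op$ with its $2x$-th moment; iterating this $k\thickapprox\log_2(y/x)$ times gives $\int_0^1 r^x\op(r)\,dr\lesssim(y/x)^{\log_2 C}\int_0^1 r^y\op(r)\,dr$, i.e.\ condition (v) of Lemma \ref{property of doub w}, whence $\op\in\hD$. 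If you want to complete your part (i), you should replace the test family $\{g_a\}$ by the monomials and argue through the moment characterization (v) rather than through (\ref{1023-2}) or (\ref{1023-3}).
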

\begin{proof}
(i) Let  $\op$  be  a  weight and there exist real numbers $\sigma$ and $\tau$ such that (\ref{18-1}) holds for all $f\in H(\D)$. This forces  $\tau>-1$
 and $\o_{[\sigma]}$ is a weight.
If $\sigma>\tau$, then
\begin{eqnarray*}
 \frac{(1-|z|)^{\sigma}(1-|\zeta|)^{\sigma}\o(\zeta)}{|1-\overline{\zeta}z|^{4+2\sigma}}
 &\leq& \frac{(1-|z|)^{\tau}(1-|\zeta|)^{\sigma}\o(\zeta)}{|1-\overline{\zeta}z|^{4+\tau+\sigma}} \\
&\leq&  \frac{(1-|z|)^{\tau}(1-|\zeta|)^{\tau}\o(\zeta)}{|1-\overline{\zeta}z|^{4+2\tau}}
\end{eqnarray*}
for all $z$, $\zeta \in \D$.  Thus, without loss of generality,  we can assume that  $\sigma=\tau$. Checking the proof of Theorem \ref{1main}, we get that
\begin{align}
J_1(\zeta)&=(1-|\zeta|^2)^\sigma\int_{\D}\frac{|f(z)-f(\zeta)|^p}{|1-\overline{\zeta}z|^{4+\tau+\sigma}} d A_\tau(z)  \nonumber\\
&\gtrsim \int_\D |f^\p(z)|^p\frac{(1-|\zeta|^2)^\tau dA_{\tau+p}(z)}{|1-\ol{\zeta}z|^{2\tau+4}}   \label{18-2}
\end{align}
for all $\zeta\in \D$. It follows from (\ref{18-1}), (\ref{18-2}) and the Fubini theorem that
\begin{align}
&\int_{\D}|f^\p(z)|^p   \ind  \frac{\o(\zeta) }{|1-\overline{\zeta} z|^{2\tau +4}} dA_\tau(\zeta)dA_{p+\tau}(z)  \nonumber \\
\lesssim & \ind |f^\p(z)|^p  \o(z) dA_{p-2}(z) \label{18-3}
\end{align}
for all $f\in H(\D)$. Take  $f^\p(z)=z^n$ in (\ref{18-3}), where $n=1, 2, \cdots$. Bear in mind  $2\tau+3>0$ and Lemma \ref{well-known estimate}. Then
\begin{eqnarray}
&~& \int_0^1 r^{np+1}(1-r)^{p-2}\o(r)dr \nonumber \\
&\gtrsim&  \int_0^1  t^{np+1} (1-t)^{p+\tau}  \int_0^1 \frac{\o(r)(1-r)^\tau r }{(1-tr)^{2\tau +3}} dr  dt \nonumber \\
&\thickapprox&  \int_0^1  \o(r)(1-r)^\tau   \left(\int_0^1 \frac{t^{np+1} (1-t)^{p+\tau} }{(1-tr)^{2\tau +3}} dt \right) dr. \label{18-4}
\end{eqnarray}
Also,
\begin{eqnarray*}
\int_0^1 \frac{t^{np+1} (1-t)^{p+\tau} }{(1-tr)^{2\tau +3}} dt
&\geq&  \int_{\sqrt{r}}^\frac{1+\sqrt{r}}{2}  \frac{t^{np+1}(1-t)^{p+\tau} }{(1-tr)^{2\tau +3}} dt\\
&\gtrsim& r^\frac{np+1}{2}(1-r)^{p-\tau-2}
\end{eqnarray*}
for  all $r\in [0, 1)$ and for all positive integers $n$. By  this and   (\ref{18-4}), there exists a constant $M\in(1,\infty)$ such that
\begin{equation} \label{18-5}
   \int_0^1  r^\frac{np+1}{2} \op(r) dr  \leq M \int_0^1 r^{np+1}\op(r)dr
\end{equation}
for all positive integers $n$. For $b\in \mathbb{R}$, let $E(b)$ be the integer with $E(b)\leq b <E(b)+1$. Next we use (v) in Lemma \ref{property of doub w} to show $\op\in \hat{\mathcal D}$.

Let $\frac{p+1}{2}\leq x \leq y<\infty$. Write $n_0=E(\frac{2x-1}{p})$ and $k=E(\log_2 \frac{2y}{n_0p})+1$ for convenience.
Then
$$
\frac{n_0p+1}{2}\leq x< \frac{(n_0+1)p+1}{2}, \  \text{and} \ \ y<2^{k-1}n_0p.
 $$
Combining these with (\ref{18-5}), we deduce
\begin{eqnarray}
\int_0^1 r^x\op(r)dr
&\leq& \int_0^1 r^\frac{n_0 p+1}{2}\op(r)dr \nonumber \\
&\leq& M^k  \int_0^1 r^{2^{k-1} n_0 p +1}\op(r)dr \nonumber \\
&\leq& M \left(\frac{2y}{n_0p}\right)^{\log_2 M} \int_0^1 r^y\op(r)dr \nonumber \\
&\leq& C_M  \left(\frac{y}{x}\right)^{\log_2 M}\int_0^1 r^y\op(r)dr,  \label{18-6}
\end{eqnarray}
where  $C_M=M\sup\limits_{n_0\geq 1}(\frac{n_0p+p+1}{n_0p})^{\log_2 M}$ is independent of $x$ and $y$.

Let  $0< x\leq y\leq  \frac{p+1}{2}$.  Clearly,
\begin{align*}
\int_0^1 r^x\op(r)dr
&\thickapprox  \int_0^1 r^y\op(r)dr \\
&\lesssim  \left(\frac{y}{x}\right)^{\log_2M } \int_0^1 r^y\op(r)dr.
\end{align*}

Let  $0<x<\frac{p+1}{2} < y <\infty$. Using (\ref{18-6}), we get
\begin{align*}
\int_0^1 r^x\op(r)dr
&\thickapprox\int_0^1 r^{\frac{p+1}{2}}\op(r)dr \\
&\lesssim C_M \left(\frac{y}{x}\right)^{\log_2M } \int_0^1 r^{y}\op(r)dr.
\end{align*}

Consequently,
$$
\int_0^1 r^x\op(r)dr \lesssim  \left(\frac{y}{x}\right)^{\log_2M }  \int_0^1 r^{y}\op(r)dr
$$
for all $0<x\leq y<\infty$. Hence   Lemma    \ref{property of doub w} yields  $\op\in \hat{\d}$.

(ii)  Suppose $\op\in \hD$,
\begin{equation}\label{parameter}
\min\{\sigma,\tau\}>p-2, \ \text{and}\ \   \tau>\max\{U(\op)-p-1, -1\}.
\end{equation}
Of course, $\op$ is a weight.  Then $\o_{[\sigma]}$ is also a weight.  Next we  show that (\ref{18-1}) holds for all $f\in H(\D)$.

Let $\sigma\geq \tau$.  Following the proof of Theorem \ref{1main} and using Theorem \ref{2Bao etac},
we obtain
\begin{align}
&\int_{\D}\int_{\D}\frac{|f(z)-f(\zeta)|^p}{|1-\overline{\zeta}z|^{4+\tau+\sigma}}\o(\zeta)dA_\tau(z)A_\sigma(\zeta) \nonumber \\
=& \ind  \frac{\o(\zeta)}{(1-|\zeta|^2)^2} dA(\zeta)\int_\D \frac{ |f\circ\vp_\zeta(u)-f\circ\vp_\zeta(0)|^p }{|1-\ol{\zeta}u|^{\tau-\sigma}}dA_\tau (u) \nonumber \\
\leq & 2^{\sigma-\tau} \ind  \frac{\o(\zeta)}{(1-|\zeta|^2)^2} dA(\zeta)\int_\D  |f\circ\vp_\zeta(u)-f\circ\vp_\zeta(0)|^p dA_\tau (u) \nonumber \\
\lesssim &  \ind  \frac{\o(\zeta)}{(1-|\zeta|^2)^2} dA(\zeta)\int_\D  |(f\circ\vp_\zeta(u))'|^p dA_{\tau+p} (u) \nonumber \\
\thickapprox & \int_\D |f^\p(z)|^p\left(\int_\D\frac{ (1-|\zeta|^2)^\tau \o(\zeta)dA(\zeta)}{|1-\ol{\zeta}z|^{2\tau+4}}\right)dA_{\tau+p}(z). \label{18-7}
\end{align}

Let $\sigma< \tau$. For $\zeta\in \D$, write $\eta_\zeta(u)=|1-\ol{\zeta}u|^{\sigma-\tau}(1-|u|^2)^\tau$. Note that $\sigma>p-2>-2$.  Clearly, there exists $s_0\in[-1,0)$ and $t_0>0$ such that if  $s>s_0$ and $t>t_0$, then
$$
s+\tau>-1, \  2+s+t> 0, \ t>\tau, \ \sigma+s>-2.
$$
It follows from Lemma \ref{Forelli-Rudin type  estimates} that
\begin{align*}
\int_\D \frac{\eta_\zeta(u)(1-|u|^2)^sdA(u)}{|1-z\ol{u}|^{2+s+t}}
&=\int_\D \frac{(1-|u|^2)^{s+\tau}dA(u)}{|1-z\ol{u}|^{2+s+t} |1-\ol{\zeta}u|^{\tau-\sigma}}  \\
&\leq C \frac{\eta_\zeta(z)}{(1-|z|^2)^t},
\end{align*}
where $C$ is a positive constant independent of $\zeta$. Using Theorem \ref{2Bao etac} and checking its proof in \cite{BWZ}, we get that there exits another positive constant $C$ independent of $\zeta$ such that
\begin{align*}
&\ \int_\D \frac{ |f\circ\vp_\zeta(u)-f\circ\vp_\zeta(0)|^p }{|1-\ol{\zeta}u|^{\tau-\sigma}}dA_\tau (u)  \\
& \leq C \ind \frac{|(f\circ\vp_\zeta(u))'|^p (1-|u|^2)^p } {|1-\ol{\zeta}u|^{\tau-\sigma}}   dA_\tau (u)  \\
&= C \int_\D |f^\p(z)|^p\frac{(1-|\zeta|^2)^{\sigma+2} }{|1-\ol{\zeta}z|^{\sigma+\tau+4}} dA_{\tau+p}(z).
\end{align*}
This yields
\begin{align}
&\int_{\D}\int_{\D}\frac{|f(z)-f(\zeta)|^p}{|1-\overline{\zeta}z|^{4+\tau+\sigma}}\o(\zeta)dA_\tau(z)A_\sigma(\zeta) \nonumber \\
\lesssim & \int_\D |f^\p(z)|^p\left(\int_\D\frac{ (1-|\zeta|^2)^\sigma  \o(\zeta)dA(\zeta)}{|1-\ol{\zeta}z|^{\sigma+\tau+4}}\right)dA_{\tau+p}(z). \label{18-8}
\end{align}

Write  $x=\min\{\sigma,\tau\}$.    It follows from (\ref{18-7}) and (\ref{18-8}) that
\begin{align}
&\int_{\D}\int_{\D}\frac{|f(z)-f(\zeta)|^p}{|1-\overline{\zeta}z|^{4+\tau+\sigma}}\o(\zeta)dA_\tau(z)A_\sigma(\zeta)   \nonumber\\
\lesssim & \int_\D |f^\p(z)|^p\left(\int_\D\frac{ (1-|\zeta|^2)^x\o(\zeta)dA(\zeta)}{|1-\ol{\zeta}z|^{\tau+x+4}}\right)dA_{\tau+p}(z)\nonumber\\
:= &\int_\D |f^\p(z)|^p d\mu(z).  \label{18-9}
\end{align}
By (\ref{parameter}), $\tau+x+3>p>0$.
For any $a\in\D\backslash \{0\}$, Lemma \ref{well-known estimate} yields
\begin{align}
\mu(S(a))=&\int_{S(a)}\left(\int_\D\frac{ (1-|\zeta|^2)^x\o(\zeta)dA(\zeta)}{|1-\ol{\zeta}z|^{\tau+x+4}}\right)dA_{\tau+p}(z) \nonumber \\
\thickapprox &(1-|a|)\int_{|a|}^1 (1-r)^{\tau+p}\int_0^{|a|}\frac{(1-t)^x \o(t)dt}{(1-rt)^{\tau+x+3}} dr \nonumber \\
&+ (1-|a|)\int_{|a|}^1 (1-r)^{\tau+p}\int_{|a|}^1 \frac{(1-t)^x \o(t)dt}{(1-rt)^{\tau+x+3}} dr \nonumber \\
: = &(1-|a|) (I_1(a)+I_2(a)). \label{bu 1}
\end{align}
If $|a|<r<1$ and $0<t<|a|$, then
$$1>\frac{1-t}{1-rt}>\frac{1-|a|}{1-r|a|}>\frac{1-|a|}{1-|a|^2}>\frac{1}{2}.$$
Note that  $\tau>U(\op)-p-1$.  By Lemma \ref{property of doub w}, we have
\begin{eqnarray}
I_1(a)&\thickapprox & \int_{|a|}^1 (1-r)^{p+\tau} \int_0^{|a|}  \frac{\o(t)(1-t)^{p-2}}{(1-t)^{\tau+p+1}} dt   dr \nonumber \\
&\lesssim &   \frac{\widehat{\op}(a)}{(1-|a|)^{\tau+p+1}} \int_{|a|}^1 (1-r)^{p+\tau}dr \nonumber \\
&\thickapprox& \widehat{\op}(a). \label{bu 2}
\end{eqnarray}
Due to   $x>p-2$ and $\tau+p>-1$,
$$\int_{0}^1 \frac{(1-r)^{\tau+p}}{(1-rt)^{\tau+x+3}}  dr\thickapprox \frac{1}{ (1-t)^{x+2-p}}$$
for all $t\in (0, 1)$. By this and the Fubini theorem, one gets
\begin{align*}
I_2(a)=&\int_{|a|}^1 \o(t)(1-t)^{x}\int_{|a|}^1 \frac{(1-r)^{\tau+p}}{(1-rt)^{\tau+x+3}}  dr dt\\
\lesssim& \int_{|a|}^1 \o(t)(1-t)^{p-2}\int_{0}^1 \frac{(1-r)^{\tau+p}(1-t)^{x+2-p}}{(1-rt)^{\tau+x+3}}  dr dt\\
\thickapprox& \widehat{\op}(a).
\end{align*}
Note that
$$
\int_{S(a)}\o_{[p-2]}(z)dA(z)\thickapprox (1-|a|)\widehat{\op}(a)
$$
for all $a\in\D\backslash \{0\}$.
Consequently,
$$
\sup_{a\in\D\backslash \{0\}} \frac{\mu(S(a))}{\int_{S(a)}\o_{[p-2]}(z)dA(z)}<\infty.
$$
It follows from  (\ref{18-9}) and Theorem \ref{CM for Apw} that
\begin{align*}
&\int_{\D}\int_{\D}\frac{|f(z)-f(\zeta)|^p}{|1-\overline{\zeta}z|^{4+\tau+\sigma}}\o(\zeta)dA_\tau(z)A_\sigma(\zeta)   \nonumber\\
\lesssim &
\int_\D |f^\p(z)|^p\left(\int_\D\frac{ (1-|\zeta|^2)^x\o(\zeta)dA(\zeta)}{|1-\ol{\zeta}z|^{\tau+x+4}}\right)dA_{\tau+p}(z)\\
\lesssim &  \int_\D |f^\p(z)|^p\op(z)dA(z)
\end{align*}
for all $f\in H(\D)$.
The proof is complete.
\end{proof}

For a nonnegative function $\eta$ on [0, 1), we say that $\eta$ is essentially decreasing on  [0, 1) if $\eta(t_1)\gtrsim \eta (t_2)$ for all $0\leq t_1<t_2<1$. Also, $\eta$ is said to be essentially increasing on  [0, 1) if $\eta(t_1)\lesssim  \eta (t_2)$ for all $0\leq t_1<t_2<1$.

Note that $\d \subsetneqq \hD$. For $0<p<\infty$ and a radial nonnegative function $\o$ on $\D$, if we assume  $\op\in\mathcal \d$, then the range of parameters $\sigma$ or  $\tau$ in (ii) of Theorem \ref{2main} should be larger. Because of this observation, we give  the following result.

\begin{prop}\label{R sigma tau}
Suppose $p>0$ and $\o$ is a radial nonnegative function on $\D$. If $\op\in\d$, then  (\ref{18-1}) holds for all $f\in H(\D)$ when
\begin{equation}\label{07231}
\tau>-1, \sigma>-2, \min\{\sigma,\tau\}> p-2-L(\op), \tau> U(\op)-p-1,
\end{equation}
 and $\o_{[\sigma]}$ is a  weight.
\end{prop}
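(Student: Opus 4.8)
The plan is to follow the proof of Theorem~\ref{2main}(ii) and to localize the single place where the hypothesis $\min\{\sigma,\tau\}>p-2$ was spent, upgrading it by means of the lower index $L(\op)$. First I would observe that the reduction of the left-hand side of (\ref{18-1}) to a Bergman--Carleson testing integral $\int_\D|f'(z)|^p\,d\mu(z)$, i.e.\ the chain ending in (\ref{18-9}) with
\[
d\mu(z)=\left(\int_\D\frac{(1-|\zeta|^2)^x\o(\zeta)\,dA(\zeta)}{|1-\ol\zeta z|^{\tau+x+4}}\right)dA_{\tau+p}(z),\qquad x=\min\{\sigma,\tau\},
\]
remains valid under (\ref{07231}) with no change. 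Indeed, the two cases $\sigma\ge\tau$ and $\sigma<\tau$ of that reduction rely only on the Littlewood--Paley estimate of Theorem~\ref{2Bao etac} and the Forelli--Rudin estimate of Lemma~\ref{Forelli-Rudin type  estimates}, whose applicability needs merely $\tau>-1$ and $\sigma>-2$; the position of $\min\{\sigma,\tau\}$ relative to $p-2$ is irrelevant here. Hence, by Theorem~\ref{CM for Apw} and the relation $\int_{S(a)}\op\,dA\thickapprox(1-|a|)\widehat{\op}(a)$, it suffices to prove $\mu(S(a))\lesssim(1-|a|)\widehat{\op}(a)$ for all $a\in\D\setminus\{0\}$.

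Next I would reuse the splitting $\mu(S(a))\thickapprox(1-|a|)\big(I_1(a)+I_2(a)\big)$ from (\ref{bu 1}). The inner term $I_1(a)$, where the $t$-integration runs over $[0,|a|]$, is bounded by $\widehat{\op}(a)$ exactly as in (\ref{bu 2}), using only $\tau>U(\op)-p-1$; this is unchanged. The outer term $I_2(a)$ is the crux. When $x>p-2$ the bound $I_2(a)\lesssim\widehat{\op}(a)$ is obtained precisely as in Theorem~\ref{2main}(ii). The genuinely new range is $p-2-L(\op)<x\le p-2$, where the old argument collapses: the $r$-integral $\int_0^1(1-r)^{\tau+p}(1-rt)^{-(\tau+x+3)}\,dr$ no longer blows up as $t\to1$, so enlarging the domain of $r$ to $[0,1]$ throws away a power of $(1-|a|)$ one cannot afford to lose.

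The repair is to keep the lower endpoint $r=|a|$. Setting $\delta=p-2-x\in[0,L(\op))$ and using $1-rt\thickapprox(1-r)+(1-t)$, I would split the inner integral at $r=t$ to get, for $t\in[|a|,1)$,
\[
G(t):=\int_{|a|}^1\frac{(1-r)^{\tau+p}}{(1-rt)^{\tau+x+3}}\,dr\ \lesssim\ (1-|a|)^{\delta}
\]
when $\delta>0$ (with a harmless factor $\log\frac{2}{1-|a|}$ when $\delta=0$), the gain $(1-|a|)^{\delta}$ coming exactly from the endpoint $|a|$. Together with $(1-t)^x\o(t)\thickapprox(1-t)^{-\delta}\op(t)$ this yields
\[
I_2(a)\ \lesssim\ (1-|a|)^{\delta}\int_{|a|}^1\op(t)(1-t)^{-\delta}\,dt .
\]
Finally I would invoke $\op\in\check{\d}$: choosing $\alpha$ with $\delta<\alpha<L(\op)$, inequality (\ref{proper L(w)}) applied to $\op$ gives $\widehat{\op}(t)\lesssim\widehat{\op}(a)\big((1-t)/(1-|a|)\big)^{\alpha}$ for $t\ge|a|$, and a dyadic summation over the points $1-2^{-n}(1-|a|)$ converts this into $\int_{|a|}^1\op(t)(1-t)^{-\delta}\,dt\lesssim(1-|a|)^{-\delta}\widehat{\op}(a)$ (the series $\sum_n 2^{n(\delta-\alpha)}$ converging because $\delta<\alpha$). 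The factors $(1-|a|)^{\pm\delta}$ cancel, so $I_2(a)\lesssim\widehat{\op}(a)$, completing the Carleson bound.

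I expect the main obstacle to be the sharp evaluation $G(t)\lesssim(1-|a|)^\delta$: one must resist the crude bound $G(t)\lesssim1$, since it is precisely the retained factor $(1-|a|)^\delta$ that cancels the $(1-|a|)^{-\delta}$ produced by the $\check{\d}$-condition, and this cancellation is what encodes why $L(\op)$ governs how far below $p-2$ one may push $\min\{\sigma,\tau\}$. A secondary technical point is the convergence in the $\check{\d}$ step, which requires $\delta<L(\op)$ so that $\alpha$ can be chosen strictly between $\delta$ and $L(\op)$.
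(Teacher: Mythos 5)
Your overall strategy coincides with the paper's: the reduction to the Carleson testing condition $\mu(S(a))\lesssim(1-|a|)\widehat{\op}(a)$ is indeed unaffected by where $\min\{\sigma,\tau\}$ sits relative to $p-2$, the term $I_1(a)$ is handled exactly as in Theorem \ref{2main}, and the new range $p-2-L(\op)<x\le p-2$ is conquered by trading the deficit $\delta=p-2-x$ against a decay exponent $\alpha<L(\op)$ supplied by $\op\in\check{\d}$ through (\ref{proper L(w)}). Your execution of the $I_2$ estimate (a pointwise-in-$t$ bound on the inner $r$-integral $G(t)$ followed by a dyadic summation) is a harmless repackaging of the paper's (Fubini, then integration by parts and the essential decrease of $\widehat{\op}(t)/(1-t)^{L(\op)-\varepsilon}$); both hinge on the same cancellation of the factors $(1-|a|)^{\pm\delta}$.

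The one flaw is at the endpoint $\delta=0$, i.e.\ $\min\{\sigma,\tau\}=p-2$, which belongs to the stated range. There the contribution to $G(t)$ from $r\in[|a|,t]$ is $\int_{|a|}^t(1-r)^{-1}\,dr=\log\frac{1-|a|}{1-t}$, which is \emph{not} bounded by $\log\frac{2}{1-|a|}$ uniformly in $t\in[|a|,1)$ (it blows up as $t\to1^-$); and even if one granted a uniform factor $\log\frac{2}{1-|a|}$, the resulting bound $I_2(a)\lesssim\widehat{\op}(a)\log\frac{2}{1-|a|}$ would not be the required $\widehat{\op}(a)$, so the parenthetical is not ``harmless'' as written. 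The repair is immediate and stays within your own scheme: keep the $t$-dependent bound $G(t)\lesssim 1+\log\frac{1-|a|}{1-t}$ and feed it into the same dyadic decomposition at $t_n=1-2^{-n}(1-|a|)$; the logarithm is comparable to $n$ on the $n$-th block and $\sum_n n\,2^{-n\alpha}<\infty$, so one still obtains $I_2(a)\lesssim\widehat{\op}(a)$. (The paper sidesteps this by treating $x=p-2$ separately, bounding $I_{21}(a)\lesssim\int_{|a|}^1(1-r)^{-1}\widehat{\op}(r)\,dr\lesssim\widehat{\op}(a)$ directly from (\ref{proper L(w)}).) With that endpoint corrected, your argument is complete and equivalent to the paper's.
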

\begin{proof}
Checking  the proof of (ii) of Theorem \ref{2main} step by step, we see that  (\ref{bu 1}) and (\ref{bu 2}) hold. Also, $I_2(a)\lesssim  \widehat{\op}(a)$ for all $a\in\D\backslash \{0\}$  when $\min\{\sigma,\tau\}> p-2$.
Hence, it suffices to prove  $I_2(a)\lesssim  \widehat{\op}(a)$ for all $a\in\D\backslash \{0\}$  when $p-2-L(\op)<\min\{\sigma,\tau\}\leq p-2$.
Recall that  $x=\min\{\sigma,\tau\}$. Write
\begin{align}
I_2(a)
&=\int_{|a|}^1 \o(t)\int_{|a|}^t \frac{(1-r)^{\tau+p}(1-t)^{x}}{(1-rt)^{\tau+x+3}}  dr dt \nonumber \\
&\ \ \ \ + \int_{|a|}^1 \o(t)\int_t^1 \frac{(1-r)^{\tau+p}(1-t)^{x}}{(1-rt)^{\tau+x+3}}  dr dt \nonumber \\
&:=I_{21}(a)+I_{22}(a). \label{B1}
\end{align}
Clearly,
\begin{equation}
I_{22}(a)
\lesssim \int_{|a|}^1 \o(t)(1-t)^{-\tau-3}\int_t^1 (1-r)^{\tau+p} dr dt
\thickapprox \widehat{\op}(a).  \label{B2}
\end{equation}
The Fubini theorem yields
\begin{align*}
I_{21}(a)
&=\int_{|a|}^1 \int_r^1  \frac{(1-r)^{\tau+p}(1-t)^{x}\o(t)}{(1-rt)^{\tau+x+3}}  dt dr  \\
&\lesssim \int_{|a|}^1 \int_r^1  (1-r)^{p-x-3}(1-t)^{x}\o(t)  dt dr.
\end{align*}
If  $x=p-2$, due to  $\op\in \check{\mathcal{D}}$, it follows from (\ref{proper L(w)}) that
\begin{align} \label{B3}
I_{21}(a)\lesssim \int_{|a|}^1 (1-r)^{-1}\widehat{\op}(r)dr\lesssim \widehat{\op}(a).
\end{align}
If $p-2-L(\op)<x<p-2$, we can choose  a small enough positive number $\varepsilon$ such that
$p-2-L(\op)+\varepsilon<x$ and
$\frac{\widehat{\op}(t)}{(1-t)^{L(\op)-\varepsilon}}$
is essentially decreasing. An integration by parts gives that
\begin{align*}
&\int_{|a|}^1 \o(t)(1-t)^x dt\\
=&(1-|a|)^{x-p+2}\widehat{\op}(a)-(x-p+2)\int_{|a|}^1 \widehat{\op}(t)(1-t)^{x-p+1}dt.
\end{align*}
Consequently,
\begin{align}
I_{21}(a)
&\lesssim \int_{|a|}^1 \int_r^1  (1-r)^{p-x-3}(1-t)^{x}\o(t)  dt dr \nonumber \\
&\thickapprox\int_{|a|}^1 \o(t)(1-t)^x dt\int_{|a|}^t(1-r)^{p-x-3}dr \nonumber  \\
&\lesssim  (1-|a|)^{p-x-2} \int_{|a|}^1 \o(t)(1-t)^x dt \nonumber  \\
&\lesssim  \widehat{\op}(a)+(1-|a|)^{p-x-2} \int_{|a|}^1 \frac{\widehat{\op}(t)(1-t)^{L(\op)-\varepsilon}}{(1-t)^{L(\op)-\varepsilon}(1-t)^{p-x-1}}dt \nonumber  \\
&\lesssim \widehat{\op}(a). \label{B4}
\end{align}
Joining (\ref{B1}), (\ref{B2}), (\ref{B3}) and (\ref{B4}), we see that $I_2(a)\lesssim  \widehat{\op}(a)$ for all $a\in\D\backslash \{0\}$  when $p-2-L(\op)<\min\{\sigma,\tau\}\leq p-2$. The proof is finished.
\end{proof}

Theorem \ref{1main} and Theorem  \ref{2main1} give  complete descriptions  of nonnegative radial weight functions $\omega$ such that the double integral estimates  for $B_p(\o)$  hold. Theorem \ref{2main}
and Proposition \ref{R sigma tau} explain further   parameters $\sigma$ and $\tau$ in (\ref{18-1}).

As stated in Section 1 (cf. \cite[Theorem 2.2]{BP}), when $p>1$ and $\alpha\leq 1/2$, there is a  double integral characterization for $B_p(\alpha)$. Theorem \ref{Bpa} below covers this case exactly.
In fact, Theorem \ref{Bpa}  gives that this  double integral characterization for $B_p(\alpha)$ exists for all possible   $p$ and $\alpha$.

\begin{theor}\label{Bpa}
Suppose
$p>\max\{0,2\alpha\}$, $\beta>\max\{-1,-2\alpha-1\}$ and
$$\tau> \max\{-1, -2+2\alpha,-1-2\alpha\}.$$
Then
\begin{align*}
&\int_{\D}\int_{\D}\frac{|f(z)-f(\zeta)|^p}{|1-\overline{\zeta}z|^{3+\beta+\tau+2\alpha}} (1-|\zeta|^2)^{\beta} (1-|z|^2)^\tau dA(z)d A(\zeta)\\
\thickapprox & \ind |f'(z)|^p  (1-|z|^2)^{p-1-2\alpha} dA(z)
\end{align*}
for all $f\in H(\D)$.
\end{theor}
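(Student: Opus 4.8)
The plan is to read Theorem \ref{Bpa} as the special case of the general double integral estimate (\ref{18-1}) obtained by choosing the radial weight $\o(z)=(1-|z|^2)^{1-2\alpha}$, so that $B_p(\o)=B_p(\alpha)$ and $\op(z)=(1-|z|^2)^{p-1-2\alpha}$. With this choice the right-hand side of the asserted equivalence is exactly $\ind|f'(z)|^p\op(z)\,dA(z)=\ind|f'(z)|^p\o(z)\,dA_{p-2}(z)$. To match the left-hand side I would set $\sigma=\beta+2\alpha-1$; then $\o(\zeta)\,dA_\sigma(\zeta)=(1-|\zeta|^2)^\beta\,dA(\zeta)$ and the kernel exponent satisfies $4+\tau+\sigma=3+\beta+\tau+2\alpha$, so the double integral in Theorem \ref{Bpa} coincides with the one appearing in Theorem \ref{1main} and in (\ref{18-1}). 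Thus the whole statement reduces to checking that both inequalities hold under the stated hypotheses on $p$, $\beta$, $\tau$, $\alpha$.

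Since $p>\max\{0,2\alpha\}\geq 2\alpha$, the exponent $p-1-2\alpha$ exceeds $-1$, so $\op$ is a standard weight and in particular $\op\in\d$, with $\widehat{\op}(r)\thickapprox(1-r)^{p-2\alpha}$. Reading off the definitions of $U(\op)$ (via (ii) in Lemma \ref{property of doub w} and Lemma \ref{Inf U}) and of $L(\op)$ (via (\ref{proper L(w)})) from this comparison gives $U(\op)=L(\op)=p-2\alpha$. The lower estimate ($\gtrsim$) then follows directly from Theorem \ref{1main}, whose hypotheses $p>0$, $\tau>-1$, and ``$\o_{[\sigma]}$ is a weight'' hold because $\o_{[\sigma]}(\zeta)=(1-|\zeta|^2)^\beta$ with $\beta>-1$. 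For the upper estimate ($\lesssim$) I would invoke Proposition \ref{R sigma tau}, which applies once the four conditions in (\ref{07231}) are verified.

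The remaining work, which is really the heart of the argument, is to confirm that the hypotheses of Theorem \ref{Bpa} are exactly what (\ref{07231}) demands after substituting $\sigma=\beta+2\alpha-1$ and $U(\op)=L(\op)=p-2\alpha$. Indeed $\sigma>-2$ becomes $\beta>-1-2\alpha$; the requirement $\min\{\sigma,\tau\}>p-2-L(\op)=2\alpha-2$ splits into $\beta>-1$ (the $\sigma$ part) and $\tau>-2+2\alpha$; and $\tau>U(\op)-p-1$ becomes $\tau>-1-2\alpha$. Together with $\tau>-1$, these reproduce precisely $\beta>\max\{-1,-2\alpha-1\}$ and $\tau>\max\{-1,-2+2\alpha,-1-2\alpha\}$, so all conditions of Proposition \ref{R sigma tau} are met. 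I expect the only delicate points to be the evaluation of $U(\op)$ and $L(\op)$ and this parameter bookkeeping; once they are in place, Theorem \ref{1main} and Proposition \ref{R sigma tau} close the two directions. It is worth noting that the cruder range in Theorem \ref{2main}(ii) would force $\sigma>p-2$, that is $\beta>p-1-2\alpha$, which is strictly stronger than the stated hypothesis; hence the use of Proposition \ref{R sigma tau}, and thereby of $L(\op)$, is essential to obtain the full range of $\beta$ and $\tau$ claimed here.
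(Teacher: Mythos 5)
Your proposal is correct and follows exactly the paper's own route: the paper likewise sets $\o(z)=(1-|z|^2)^{1-2\alpha}$ and $\sigma=\beta-1+2\alpha$, notes $U(\op)=L(\op)=p-2\alpha$, and concludes by applying Theorem \ref{1main} for the lower bound and Proposition \ref{R sigma tau} for the upper bound. Your explicit verification that the hypotheses on $\beta$ and $\tau$ translate precisely into condition (\ref{07231}) is the bookkeeping the paper leaves implicit, and it is carried out correctly.
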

\begin{proof}
Take  $\o(z)=(1-|z|^2)^{1-2\alpha}$ and  $\sigma=\beta-1+2\alpha$.
Then $U(\op)=L(\op)=p-2\alpha$.
Applying   Theorem \ref{1main} and Proposition \ref{R sigma tau} to this $\o$,   we obtain
\begin{align*}
&\int_{\D}\int_{\D}\frac{|f(z)-f(\zeta)|^p}{|1-\overline{\zeta}z|^{4+\tau+\sigma}} (1-|\zeta|^2)^{1-2\alpha+\sigma} (1-|z|^2)^\tau dA(z)d A(\zeta)\\
\thickapprox & \ind |f'(z)|^p  (1-|z|^2)^{p-1-2\alpha} dA(z)
\end{align*}
for all $f\in H(\D)$.  The  desired result follows.
\end{proof}

\section{Geometric characterizations of functions in $B_p(\o)$ spaces}

In this section, based on   double integral estimates for $B_p(\o)$ spaces, we give some  geometric descriptions  of functions in $B_p(\o)$ spaces when $0<p<\infty$ and  $\op\in\hD$.

For any $z$, $\zeta$ in $\D$, denote by $\beta(z, \zeta)$ the distance between $z$ and $\zeta$ in the Bergman metric. It is well known that
$$
\beta(z, \zeta)=\frac{1}{2}\log \frac{1+\rho(z, \zeta)}{1-\rho(z, \zeta)}.
$$
Suppose $z\in \D$ and $r>0$. Let $D(z, r)=\{\zeta\in \D:  \beta(z, \zeta)<r\}$  and let $|D(z, r)|$ be the area of $D(z, r)$ with respect to the  measure $dA$. For $f\in H(\D)$, denote by  $|f(D(z, r))|$  the area of the image of $D(z, r)$ under $f$. Recall that
$$
O_r(f)(z)=\sup \{|f(z)-f(\zeta)|: \ \ \zeta\in  D(z, r)\}
$$
is  the oscillation of $f$ at $z$ in the Bergman metric.  The mean oscillation of $f$ at $z$ in the Bergman metric is
$$
MO_r(f)(z)=\frac{1}{|D(z, r)|} \int_{D(z, r)} |f(\zeta)-\widehat{f_r}(z)|dA(\zeta),
$$
where $\widehat{f_r}$ is the averaging function given by
$$
\widehat{f_r}(z)= \frac{1}{|D(z, r)|} \int_{D(z, r)} f(u)dA(u).
$$

Applying Theorem \ref{2main}, we obtain  the following conclusion which generalizes Theorem B in \cite{Zhu1} from the Besov space $B_p$, $p>1$, to the  Besov type space $B_p(\o)$  for   $0<p<\infty$ and  $\op\in\hD$.
The proof of Theorem B in \cite{Zhu1} used the H\"older inequality. Theorem \ref{8main1} below includes the case of $0<p<1$ for which the H\"older inequality is invalid.

\begin{theor}\label{8main1}
Suppose $f\in H(\D)$,  $0<r<\infty$, $0<p<\infty$, and $\op\in\hD$. Then the following conditions are equivalent:
\begin{enumerate}[(i)]
\item $f\in B_p(\o)$;
\item the function  $z\mapsto |f(D(z, r))|^{1/2}$  belongs to   $L^p(\D, wd\lambda)$;
\item $O_r(f) \in L^p(\D, wd\lambda)$;
\item $MO_r(f) \in L^p(\D, wd\lambda)$.
\end{enumerate}
\end{theor}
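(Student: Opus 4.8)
The plan is to show that all four conditions are equivalent to the finiteness of $\int_{\D}|f'(z)|^p\o(z)\,dA_{p-2}(z)$ (condition (i)), and in fact that the associated quantities are all comparable. Since $O_r(f)$, $MO_r(f)$ and $|f(D(\cdot,r))|$ are unchanged when a constant is added to $f$, and the seminorm $\int_{\D}|f'|^p\o\,dA_{p-2}$ has the same property, I may work with seminorms throughout and ignore the $|f(0)|$ term. The whole argument is organized around the auxiliary quantity
$$
M_\rho(f)(\zeta)=\frac{1}{|D(\zeta,\rho)|}\int_{D(\zeta,\rho)}|f(z)-f(\zeta)|^p\,dA(z),\qquad \zeta\in\D,\ \rho>0,
$$
and the key \emph{bridge estimate} that I will establish first: for every $\rho>0$,
$$
\int_{\D}M_\rho(f)(\zeta)\,\o(\zeta)\,d\lambda(\zeta)\thickapprox \int_{\D}|f'(z)|^p\o(z)\,dA_{p-2}(z).
$$
Note that $\o$ sits at the base point $\zeta$ on the left, so once this is in hand the three geometric conditions will follow from purely pointwise comparisons (in $\zeta$) of $O_r$, $MO_r$ and $|f(D(\cdot,r))|^{1/2}$ against $M_\rho(f)^{1/p}$, integrated against $\o\,d\lambda$ with no need to move the weight around.

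For the bridge estimate, the upper bound is where Theorem \ref{2main} enters, and it is the only place the hypothesis $\op\in\hD$ is used. Fixing admissible $\sigma,\tau$ as in part (ii) of Theorem \ref{2main}, I restrict the inner $z$-integral of the double integral in (\ref{18-1}) to the pseudohyperbolic disk $\Delta(\zeta,\rho)$; there $|1-\ol\zeta z|\thickapprox 1-|\zeta|^2\thickapprox 1-|z|^2$, so the exponents of $(1-|\cdot|^2)$ collapse and this localized piece is comparable to $\int_{\D}M_\rho(f)(\zeta)\o(\zeta)\,d\lambda(\zeta)$. Since the full double integral dominates its localized piece and is itself $\lesssim \int_{\D}|f'|^p\o\,dA_{p-2}$ by Theorem \ref{2main}, the upper bound follows. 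For the lower bound I use that $z\mapsto f(z)-f(\zeta)$ is analytic: a Cauchy estimate gives $(1-|\zeta|^2)|f'(\zeta)|\lesssim O_{\rho'}(f)(\zeta)$ for any $\rho'$, and subharmonicity of $|f(z)-f(\zeta)|^p$ gives $O_{\rho'}(f)(\zeta)^p\lesssim M_\rho(f)(\zeta)$ for $\rho'<\rho$; chaining and integrating against $\o\,d\lambda$ yields $\int_{\D}|f'|^p\o\,dA_{p-2}\lesssim \int_{\D}M_\rho(f)\,\o\,d\lambda$. As this holds for every $\rho>0$, the bridge estimate is radius-free.

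With the bridge estimate in place, each geometric condition reduces to two pointwise inequalities valid for all $p>0$. For (iii), subharmonicity of $|f(z)-f(\zeta)|^p$ in $z$ gives $O_r(f)(\zeta)^p\lesssim M_R(f)(\zeta)$ for $R>r$, while trivially $M_r(f)(\zeta)\le O_r(f)(\zeta)^p$; integrating and applying the bridge at radii $r$ and $R$ squeezes $\int O_r(f)^p\o\,d\lambda$ between two quantities each $\thickapprox\int|f'|^p\o\,dA_{p-2}$. For (iv), the triangle inequality gives $MO_r(f)(\zeta)\le 2O_r(f)(\zeta)$ (hence the upper bound through (iii)), and a Cauchy-type estimate on $D(\zeta,r)$ with the average subtracted gives $(1-|\zeta|^2)|f'(\zeta)|\lesssim MO_r(f)(\zeta)$ for the lower bound. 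For (ii), writing $|f(D(\zeta,r))|\thickapprox\int_{D(\zeta,r)}|f'|^2\,dA$, subharmonicity of $|f'|^2$ (or Landau's theorem, if $|f(D(\zeta,r))|$ is read as the genuine image area) gives $(1-|\zeta|^2)^2|f'(\zeta)|^2\lesssim|f(D(\zeta,r))|$ for the lower bound, while the pointwise bound $(1-|w|^2)|f'(w)|\lesssim O_R(f)(\zeta)$ on $D(\zeta,r)$ gives $|f(D(\zeta,r))|^{1/2}\lesssim O_R(f)(\zeta)$ for the upper bound. Every one of these is an inequality between quantities evaluated at the \emph{same} base point $\zeta$, so integrating against $\o\,d\lambda$ is immediate.

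The main obstacle, and the reason this extends Zhu's theorem below $p=1$, is that the classical proof compares $MO_r$ to the $L^p$-mean $M_r(f)^{1/p}$ through Jensen's inequality, which requires $p\ge1$. I avoid this entirely: the upper bound for $MO_r$ is routed through $MO_r\le 2O_r$ together with the subharmonicity bound $O_r^p\lesssim M_R(f)$, both valid for every $p>0$. The second delicate point is the interaction with the weight: because $\o$ need not be essentially constant on Bergman disks, no naive averaging of $\o$ over $D(\zeta,r)$ is available, and the genuine weight behavior is precisely what is packaged into Theorem \ref{2main}. The proposal sidesteps this by pinning the weight at the base point in every pointwise comparison and invoking Theorem \ref{2main} only once, in the upper half of the bridge estimate; the remaining care is bookkeeping of the various Bergman radii, which is harmless since the bridge holds for all $\rho>0$ with comparison constants depending only on $\rho$, $p$ and $\o$.
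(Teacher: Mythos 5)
Your proof is correct, and while it rests on exactly the same essential inputs as the paper's --- Theorem \ref{2main}(ii) (equivalently (\ref{18-1})) for the hard direction, subharmonicity of $|g|^p$ for all $p>0$, the pointwise lower bounds $(1-|z|^2)|f'(z)|\lesssim |f(D(z,r))|^{1/2}$, $\lesssim MO_r(f)(z)$, and the inequality $MO_r\le 2O_r$ --- the organization is genuinely different and buys something. The paper proves a cycle of implications and, for $(i)\Rightarrow(ii)$, re-enters the proof of Theorem \ref{2main} to invoke the Carleson-measure estimate (\ref{7111}) for the measure $d\mu$; your version extracts a single two-sided ``bridge'' $\int_\D M_\rho(f)\,\o\,d\lambda \thickapprox \int_\D |f'|^p\o\,dA_{p-2}$ by localizing the double integral of (\ref{18-1}) to a Bergman disk (where $|1-\ol\zeta z|\thickapprox 1-|\zeta|^2\thickapprox 1-|z|^2$ collapses all the kernel exponents), and then every one of (ii)--(iv) reduces to pointwise comparisons at a single base point, so the weight never has to be moved. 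In particular your $(i)\Rightarrow(ii)$, via $|f(D(\zeta,r))|^{1/2}\lesssim O_R(f)(\zeta)$ and the already-established oscillation bound, is self-contained and avoids (\ref{7111}) entirely; the bridge formulation also makes the independence of the radius $r$ transparent. One small imprecision to fix: $|f(D(\zeta,r))|$ is only bounded \emph{above} by $\int_{D(\zeta,r)}|f'|^2\,dA$ (the latter counts multiplicities), so the lower bound $(1-|\zeta|^2)^2|f'(\zeta)|^2\lesssim |f(D(\zeta,r))|$ cannot come from subharmonicity of $|f'|^2$ alone and must be the Landau-type estimate you mention parenthetically (the paper cites Axler for exactly this); state that as the actual argument rather than as an alternative.
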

\begin{proof}
$(i)\Rightarrow (ii)$.  Let $f\in B_p(\o)$.  By the proof of Theorem \ref{2main}, there exist real numbers $x$ and $\tau$ such that
\begin{align}
&\int_\D |f^\p(z)|^p\left(\int_\D\frac{ (1-|\zeta|^2)^x\o(\zeta)dA(\zeta)}{|1-\ol{\zeta}z|^{\tau+x+4}}\right)dA_{\tau+p}(z) \nonumber \\
\lesssim &  \int_\D |f^\p(z)|^p\op(z)dA(z)<\infty.  \label{7111}
\end{align}
Note that for $z\in \D$ the integral $\int_{D(z, r)}|f'(\zeta)|^2 dA(\zeta)$, familiar from the theory of the classical Dirichlet space, is equal to the area of
$f(D(z, r))$ counting multiplicities. The quantity $|f(D(z, r))|$  disregards  multiplicities. Hence
\begin{align}\label{7112}
|f(D(z, r))|\leq \int_{D(z, r)}|f'(\zeta)|^2 dA(\zeta).
\end{align}
It is well known that  $|D(z, r)|\thickapprox (1-|z|^2)^2$ for all $z\in \D$. For $\zeta \in D(z, r)$,  then    $D(\zeta, r)\subseteq D(z, 2r)$,   and
$1-|z|\thickapprox 1-|\zeta| \thickapprox |1-\overline{z} \zeta|$. Hence,
\begin{align}
|f'(\zeta)|^p & \lesssim \frac{1}{|D(\zeta, r)|} \int_{D(\zeta, r)} |f'(u)|^p  dA(u) \nonumber \\
 & \lesssim \frac{1}{|D(z, r)|} \int_{D(z, 2r)} |f'(u)|^p  dA(u) \label{7113}
\end{align}
for all $\zeta \in D(z, r)$.  Using (\ref{7113}), we  deduce
\begin{align*}
&\left[\int_{D(z, r)}|f'(\zeta)|^2 dA(\zeta)\right]^{p/2}\\
  \leq  & |D(z, r)|^{p/2} \sup \{|f'(\zeta)|^p: \ \ \zeta \in D(z, r)\} \nonumber \\
 \lesssim   & (1-|z|)^{p-2}  \int_{D(z, 2r)} |f'(u)|^p  dA(u) \nonumber \\
 \thickapprox & \int_{D(z, 2r)} \frac {|f'(u)|^p (1-|u|^2)^{\tau+p} (1-|z|^2)^{x+2} }{|1-\overline{z} u|^{4+\tau+x}}dA(u)  \nonumber
\end{align*}
for all $z\in \D$. By  this estimate, the Fubini theorem and  (\ref{7111}), we get
\begin{align*}
&\ind \left[\int_{D(z, r)}|f'(\zeta)|^2 dA(\zeta)\right]^{p/2} \o(z) d\lambda(z)\\
\lesssim   &  \ind \ind  \frac {|f'(u)|^p (1-|u|^2)^{\tau+p} (1-|z|^2)^{x+2} }{|1-\overline{z} u|^{4+\tau+x}}dA(u)  \o(z) d\lambda(z)\\
\thickapprox & \ind   |f'(u)|^p (1-|u|^2)^{\tau+p} \ind    \frac { (1-|z|^2)^{x} \o(z) }{|1-\overline{z} u|^{4+\tau+x}}   dA(z)dA(u)\\
< & \infty.
\end{align*}
Combining this with (\ref{7112}), we get that  the function  $z\mapsto |f(D(z, r))|^{1/2}$  belongs to   $L^p(\D, wd\lambda)$.

$(ii)\Rightarrow (i)$. It follows from \cite[p. 322]{Ax} that $(1-|z|^2)|f'(z)|\lesssim |f(D(z, r))|^{1/2}$ for all $z\in \D$. Thus this  implication is true.

$(i)\Rightarrow (iii)$.  Let $f\in B_p(\o)$. By Theorem \ref{2main},  there exist real numbers $\sigma$ and $\tau$ such that
\begin{eqnarray}\label{710-1}
\int_{\D}\int_{\D}\frac{|f(z)-f(\zeta)|^p}{|1-\overline{\zeta}z|^{4+\tau+\sigma}}\o(\zeta)dA_\tau(z)A_\sigma(\zeta)
\lesssim \ind |f'(z)|^p  \o(z) dA_{p-2}(z)
\end{eqnarray}
For $z$ and $\zeta$ in $\D$, the subharmonicity of the function $u \mapsto |f(u)-f(z)|^p$ on $\D$ yields
$$
|f(\zeta)-f(z)|^p \leq \frac{C}{|D(\zeta, r)|} \int_{D(\zeta, r)} |f(u)-f(z)|^p dA(u),
$$
where $C$ is a positive constant depending only on $r$ (cf. \cite[Proposition 4.13]{Zhu}).       If  $\zeta\in  D(z, r)$,  then $D(\zeta, r)\subseteq D(z, 2r)$,   and
$1-|z|\thickapprox 1-|\zeta| \thickapprox |1-\overline{z} \zeta|$.  Consequently,
\begin{align*}
[O_r(f)]^p(z)&\lesssim \frac{1}{|D(z, 2r)|} \int_{D(z, 2r)} |f(u)-f(z)|^p dA(u) \\
& \thickapprox  \int_{D(z, 2r)} |f(u)-f(z)|^p \frac{(1-|z|^2)^{2+\sigma}(1-|u|^2)^{\tau}}{|1-z\overline{u}|^{4+\tau+\sigma}}dA(u) \\
& \lesssim \int_\D  |f(u)-f(z)|^p \frac{(1-|z|^2)^{2+\sigma}(1-|u|^2)^{\tau}}{|1-z\overline{u}|^{4+\tau+\sigma}}dA(u) .
\end{align*}
Combining this with (\ref{710-1}), we get  $O_r(f) \in L^p(\D, wd\lambda)$.

$(iii)\Rightarrow (iv)$. Let $O_r(f) \in L^p(\D, wd\lambda)$.  For $z\in \D$, a direct calculation gives
\begin{align*}
MO_r(f)(z)& \leq \frac{1}{|D(z, r)|^2} \int_{D(z, r)}\int_{D(z, r)} |f(\zeta)-f(u)|dA(\zeta)dA(u)\\
&\leq \frac{2}{|D(z, r)|} \int_{D(z, r)} |f(\zeta)-f(z)|dA(\zeta)\\
&\leq 2 O_r(f)(z).
\end{align*}
Hence $MO_r(f) \in L^p(\D, wd\lambda)$.

$(iv)\Rightarrow (i)$.  By \cite[p. 330]{Zhu1},  $(1-|z|^2)|f'(z)|\lesssim MO_r(f)(z)$ for all $z\in \D$. Then the desired result holds.  We finish the proof.
\end{proof}

\section{Hankel type operators related to  $B_p(\o)$ spaces with radial B\'ekoll\'e-Bonami weights}

In this section, applying double integral estimates for Besov type spaces $B_p(\o)$, we characterize the boundedness of  Hankel type operators related to  $B_p(\o)$  with radial B\'ekoll\'e-Bonami weights. This result is  new for $B_p(\alpha)$ when  $1<p<\infty$ and $1/2<\alpha <p/2$. Our result on $B_2(\o)$ also improves  a previous result from the literature. The compactness of these Hankel type operators  is  also considered.

For $p>1$ and $s>-1$, the B\'ekoll\'e-Bonami  class $\B_{p, s}$ consists of  nonnegative and  integrable  functions  $\eta$ on $\D$ with the property  that there exists a positive constant $C$ satisfying
{\small
$$
\left(\int_{S(a)}\eta(z)  dA(z) \right) \left(\int_{S(a)} \left(\frac{\eta(z)}{(1-|z|^2)^s}\right) ^{-\frac{p'}{p}} dA_s(z) \right)^{\frac{p}{p'}}
\leq C (A_s (S(a)))^p
$$}
for all Carleson boxes $S(a)$.  Here and in what follows,   $p'$ is the real number with   $1/p+1/p'=1$.   D. B\'ekoll\'e and A. Bonami \cite{BB} proved that $\eta\in \B_{p,s}$
if and only if the Bergman projection
$$
P_s f(z)= (s +1)\ind \frac{f(\zeta)}{(1-\overline{\zeta}z)^{s+2}} dA_s (\zeta)
$$
is bounded from $L^p(\D, \eta dA)$  to $A^p_\eta$. Note that weights in $\B_{p,s}$ are  not necessarily radial.  From Proposition 6 in \cite{PPR}, any radial weight in $\B_{p,s}$  belongs to $\d$. See \cite{Bek, BB} for these weights.

For $s>-1$, by the Bergman projection $P_s$, it is  possible
to define a (small) Hankel type operator $h_{s, f}$ on $\mathcal P$ by
$$
h_{s, f} g=\overline{P_{s}(f\overline{g})},  \ \ \  g\in \mathcal P.
$$
We refer to \cite{Po, Zhu} for more results on Hankel type operators.

Let  $p>0$ and let $\o_{[p-2]}$ be a weight. Denote by $S_p(\o)$ the Sobolev type space of smooth functions $u: \D\rightarrow \C$ such that
$$
\|u\|_{S_p(\o)}=|u(0)|+ \left(\int_{\mathbb{D}} |\nabla u(z)|^p (1-|z|^2)^{p-2}\omega(z)dA(z)\right)^{1/p}<\infty.
$$
Also, if  $\o(z)=(1-|z|^2)^{1-2\alpha}$, then we write $S_p(\o)$ as $S_p(\alpha)$.
Clearly, $B_p(\o)$ is a subset of  all analytic functions in  $S_p(\o)$.
 Denote by $\mathcal{P}$ the class of  polynomials on $\D$. If $w$ is radial, it is well known that  $\mathcal{P}$ is  dense in $B_p(\o)$ (cf. \cite{Mer}).
For $s>-1$, consider  the operator $\widetilde{P}_{s}$ given by
 $$
 \widetilde{P}_{s}u(z)=u(0)+\ind \frac{\partial u}{\partial w}(w) \frac{1-(1-\overline{w}z)^{1+s}}{\overline{w}(1-\overline{w}z)^{1+s}}(1-|w|^2)^s dA(w).
 $$
 Then one can define a  (small) Hankel type operator on $\mathcal P$ with certain  symbol $f$  by
$$
h_f^{s} g=\overline{\widetilde{P}_{s}(f\overline{g})},  \ \ \  g\in \mathcal P.
$$
 As explained by D. Blasi and J. Pau \cite[p. 402]{BP}, $\widetilde{P}_{1-2\alpha}$ defines a projection from $S_p(\alpha)$ to $B_p(\alpha)$.
 For $f$ analytic in $\D$,
the boundedness of $h_f^{1-2\alpha}$ from $B_p(\alpha)$ to $S_p(\alpha)$  was also  studied in  \cite{BP}. Considering  the same topic  with weights  $\o$, one should define a Hankel type operator by the  projection from
$S_p(\o)$ to $B_p(\o)$. From this way, the results  in Section 2 will not be used in the study. Our  purpose of this section is to present how to apply double integral estimates for  $B_p(\o)$ obtained in Section 2. Thus  we still focus on the operator $h_f^{s}$ in this  section. In particular, some special cases of our main result  can complete or improve some results before.

\subsection{The boundedness of Hankel type operators  from  $B_p(\o)$  to $S_p(\o)$}  In this subsection, we investigate the boundedness of $h^s_f:  B_p(\o) \to S_p(\o)$ with analytic symbol $f$.

We first recall Proposition 5 in \cite{PRS} as follows.
\begin{otherp}\label{w yiwan}
Let $0<p<\infty$, $\o\in \hD$ and write $\tilde{w}(r)=\hat{\o}(r)/(1-r)$ for all $0\leq r<1$.  Then $\o \in \check{\d}$ if and only if $\|f\|_{A^p_\o}\thickapprox \|f\|_{A^p_{\tilde{\o}}}$ for all $f\in H(\D)$.
\end{otherp}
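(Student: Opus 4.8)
The plan is to reduce the equivalence of norms to a comparison of the tail integrals $\hat\o$ and $\widehat{\tilde{\o}}$, where $\widehat{\tilde{\o}}(s)=\int_s^1\tilde{\o}(r)\,dr=\int_s^1\frac{\hat\o(r)}{1-r}\,dr$. The two underlying ingredients are that, for a radial weight $w$, one has $\|f\|_{A^p_w}^p\thickapprox\int_0^1 r\,w(r)M_p^p(r,f)\,dr$ with $M_p^p(r,f)=\frac1{2\pi}\int_0^{2\pi}|f(re^{i\theta})|^p\,d\theta$ nondecreasing in $r$, and the two-sided moment estimate for doubling weights: if $w\in\hD$ then $\int_0^1 r^x w(r)\,dr\thickapprox\hat w(1-1/x)$ for $x\ge1$. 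First I would prove this moment estimate. The lower bound is immediate since $r^x\gtrsim1$ on $[1-1/x,1]$, while for the upper bound I would split $[0,1-1/x]$ dyadically in the variable $1-r$ and use (ii) of Lemma \ref{property of doub w} to dominate $\hat w(1-2^k/x)$ by $2^{k\beta}\hat w(1-1/x)$, which is summable against the decay $r^x\le e^{-2^k}$ on the $k$-th block.

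Next I would analyse $\widehat{\tilde{\o}}$. Using only $\o\in\hD$ one always has $\widehat{\tilde{\o}}(s)\gtrsim\hat\o(s)$: restricting the integral to $[s,(1+s)/2]$, where $\int_s^{(1+s)/2}\frac{dr}{1-r}=\log2$ and $\hat\o(r)\gtrsim\hat\o(s)$ by doubling, already yields the bound. Conversely, if $\o\in\check{\d}$ then by (\ref{proper L(w)}) the ratio $\hat\o(r)/(1-r)^\alpha$ is essentially decreasing for some $\alpha>0$, whence
\[
\widehat{\tilde{\o}}(s)\lesssim\frac{\hat\o(s)}{(1-s)^\alpha}\int_s^1(1-r)^{\alpha-1}\,dr\thickapprox\hat\o(s),
\]
so $\o\in\check{\d}$ forces $\widehat{\tilde{\o}}\thickapprox\hat\o$; in particular $\tilde{\o}$ is integrable, and the same splitting as above gives $\widehat{\tilde{\o}}(r)\lesssim\widehat{\tilde{\o}}((1+r)/2)$, i.e.\ $\tilde{\o}\in\hD$.

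For the ``if'' direction I would transfer $\widehat{\tilde{\o}}\thickapprox\hat\o$ to the norms. By Fubini, for a radial weight $w$,
\[
\int_0^1 r w(r)M_p^p(r,f)\,dr=|f(0)|^p\!\int_0^1\! r w(r)\,dr+\int_{(0,1)}\Big(\int_t^1 r w(r)\,dr\Big)\,dM_p^p(t,f),
\]
and since $\int_t^1 r\o(r)\,dr\thickapprox\hat\o(t)$ while $\int_t^1 r\tilde{\o}(r)\,dr\thickapprox\widehat{\tilde{\o}}(t)$, the equivalence $\widehat{\tilde{\o}}\thickapprox\hat\o$ passes through the representation (the total masses and the increments $dM_p^p$ being common to both) to yield $\|f\|_{A^p_\o}\thickapprox\|f\|_{A^p_{\tilde{\o}}}$ for all $f\in H(\D)$.

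For the converse, assume $\|f\|_{A^p_\o}\thickapprox\|f\|_{A^p_{\tilde{\o}}}$ for all $f$. Testing on $f=1$ shows $\tilde{\o}$ is integrable, hence $\tilde{\o}\in\hD$ by the argument above, and testing on the monomials $z^n$ together with the moment estimate gives $\widehat{\tilde{\o}}(1-1/(np))\thickapprox\hat\o(1-1/(np))$ for every $n$; doubling of $\hat\o$ and $\widehat{\tilde{\o}}$ then interpolates this to $\widehat{\tilde{\o}}(s)\thickapprox\hat\o(s)$, in particular $\widehat{\tilde{\o}}(s)\lesssim\hat\o(s)$, for all $s\in[0,1)$. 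Writing $g=\widehat{\tilde{\o}}$, this reads $g'(s)=-\hat\o(s)/(1-s)\le-c\,g(s)/(1-s)$ a.e.\ for some $c>0$; integrating this differential inequality from $s$ to $t$ yields $g(t)/(1-t)^c\lesssim g(s)/(1-s)^c$, and since $g\thickapprox\hat\o$ the function $\hat\o(t)/(1-t)^c$ is essentially decreasing, which is precisely (\ref{proper L(w)}) and gives $\o\in\check{\d}$. The step I expect to be the main obstacle is exactly this converse: because $\widehat{\tilde{\o}}\gtrsim\hat\o$ holds for \emph{every} $\o\in\hD$, the norm equivalence is informative only through the reverse bound $\widehat{\tilde{\o}}\lesssim\hat\o$, and extracting it demands genuinely two-sided (not merely one-sided) control of $\|z^n\|_{A^p_w}^p$ by $\hat w(1-1/(np))$ --- where the doubling hypothesis is indispensable --- followed by the passage from this integral decay estimate to the pointwise $\check{\d}$ condition via the differential inequality.
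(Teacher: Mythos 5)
Your proposal is correct, but there is nothing in the paper to compare it against: the paper does not prove this proposition, it simply recalls it as Proposition 5 of \cite{PRS}, so what you have written is a self-contained proof of a cited result. Your argument follows the natural (and, as far as I can tell, essentially the standard) route: reduce $\|f\|_{A^p_w}^p$ to the tail function $\hat w$ via the monotonicity of $M_p^p(r,f)$ and the moment estimate $\int_0^1 r^x w(r)\,dr\thickapprox \hat w(1-1/x)$; observe that $\widehat{\tilde{\o}}\gtrsim\hat\o$ holds for every $\o\in\hD$, so that the content of the norm equivalence lies entirely in the reverse bound; obtain $\widehat{\tilde{\o}}\lesssim\hat\o$ from $\o\in\check{\d}$ by integrating $(1-r)^{\alpha-1}$ against the essentially decreasing ratio in (\ref{proper L(w)}); and, conversely, extract $\widehat{\tilde{\o}}\lesssim\hat\o$ from the monomial test functions and convert it into (\ref{proper L(w)}) through the differential inequality $\widehat{\tilde{\o}}\,'(s)=-\hat\o(s)/(1-s)\le -c\,\widehat{\tilde{\o}}(s)/(1-s)$. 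That last step is the one genuinely nontrivial idea and it is exactly the right mechanism for recovering a $\check{\d}$ exponent from an integral comparison. Two minor points would need a sentence in a full write-up, though neither is a gap: the dyadic-block proof of the moment estimate requires $x\ge 2$, the range $1\le x\le 2$ being handled by $\int_0^1 r^x w\thickapprox\hat w(0)\thickapprox\hat w(1-1/x)$ via finitely many applications of doubling; and the logarithmic integration of the differential inequality needs $\widehat{\tilde{\o}}(s)>0$ for all $s<1$, which holds because a nontrivial $\o\in\hD$ has $\hat\o>0$ on $[0,1)$ (iterate the doubling inequality toward the boundary).
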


\noindent{\bf  Remark }.
In fact, by the proof of the  proposition above in \cite{PRS},  if  $\o \in \d$,  then $\tilde{\o}\in\mathcal{R}$. Here $\mathcal{R}$ is the regular class of weights consisting  of all $\o\in\d$ such that
$$\hat{\o}(r)\thickapprox (1-r)\o(r), \,\,\,r\in(0,1).$$
Also,   if  $\o \in \d$, then $\hat{\tilde{\o}}(r)\thickapprox \hat{\o}(r)$ for all $0\leq r<1$. Then it is also clear that
$U(\o)=U(\tilde  \o)$  and $L(\o)=L(\tilde  \o)$.

We also need the following result  (cf. \cite[Lemma 3.1]{Pe}).
\begin{otherl}\label{test functions}
Let $0<p<\infty$ and $\o \in \hD$. Then there is real number $\lambda_0=\lambda_0(\o)$ such that for any $\lambda \geq \lambda_0$ and each $a\in \D$ the function $F_{a, p}(z)=\left(\frac{1-|a|^2}{1-\overline{a}z}\right)^{(\lambda+1)/p}$
is analytic in $\D$ and satisfies
$$
|F_{a, p}(z)| \thickapprox 1, \ \ z\in S(a), \ \ a\in \D,
$$
and
$$
\|F_{a, p}\|^p_{A^p_\o}\thickapprox \int_{S(a)} \o dA, \ \ a\in \D.
$$
\end{otherl}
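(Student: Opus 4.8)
The plan is to verify the three assertions in turn, treating analyticity and the pointwise bound as routine and reserving the real work for the norm comparison. First I would check analyticity: since $a\in\D$ forces $|\overline a z|<1$, the quantity $1-\overline a z$ stays in the open right half-plane, so $(1-\overline a z)^{-(\lambda+1)/p}$ is well defined and analytic via the principal branch; hence $F_{a,p}\in H(\D)$ for every real $\lambda$. For the pointwise estimate, I would invoke the standard geometric fact that $|1-\overline a z|\thickapprox 1-|a|\thickapprox 1-|a|^2$ for all $z\in S(a)$, with comparison constants independent of $a$. This immediately gives $\frac{1-|a|^2}{|1-\overline a z|}\thickapprox 1$ on $S(a)$, and raising to the fixed power $(\lambda+1)/p$ yields $|F_{a,p}(z)|\thickapprox 1$ there (the constants now depend on $\lambda$ and $p$, but these are fixed throughout).

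Next I would rewrite the norm as a single integral of the type governed by Lemma \ref{property of doub w}(iv). Since $|F_{a,p}(z)|^p=\bigl(\tfrac{1-|a|^2}{|1-\overline a z|}\bigr)^{\lambda+1}$, one has
$$\|F_{a,p}\|^p_{A^p_\o}=(1-|a|^2)^{\lambda+1}\ind\frac{\o(z)}{|1-\overline a z|^{\lambda+1}}\,dA(z).$$
The lower bound is then immediate: restricting the integral to $S(a)$ and using $|1-\overline a z|\thickapprox 1-|a|^2$ there gives $\|F_{a,p}\|^p_{A^p_\o}\gtrsim \int_{S(a)}\o\,dA$. The upper bound is where the hypothesis $\o\in\hD$ enters decisively. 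I would choose $\lambda_0=\lambda_0(\o)$ so that (\ref{1023-3}) holds; since that condition is stable under increasing the parameter (as noted in the paragraph following Lemma \ref{property of doub w}), it then holds for every $\lambda\geq\lambda_0$. Applying (\ref{1023-3}) with $\xi=a$ yields
$$(1-|a|^2)^{\lambda+1}\ind\frac{\o(z)}{|1-\overline a z|^{\lambda+1}}\,dA(z)\lesssim (1-|a|^2)^{\lambda+1}\frac{\hat\o(a)}{(1-|a|)^{\lambda}}\thickapprox (1-|a|)\hat\o(a).$$
Finally I would combine this with the elementary identity $\int_{S(a)}\o\,dA\thickapprox(1-|a|)\hat\o(a)$, valid for radial doubling weights, to close the upper bound and obtain $\|F_{a,p}\|^p_{A^p_\o}\thickapprox\int_{S(a)}\o\,dA$.

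I do not expect a serious obstacle here; the one point requiring care is the correct choice of $\lambda_0$, which must be taken large enough that (\ref{1023-3}) holds and must be seen to persist for all larger $\lambda$, this being exactly the upward stability of the parameter in Lemma \ref{property of doub w}. The comparison $\int_{S(a)}\o\,dA\thickapprox(1-|a|)\hat\o(a)$ should be justified by passing to polar coordinates and using the doubling property of $\hat\o$ to handle $a$ near the origin; everything else reduces to the bookkeeping of comparison constants, which remain uniform in $a$ because $\lambda$ and $p$ are fixed once and for all.
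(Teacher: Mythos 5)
Your proof is correct. The paper does not prove this lemma at all---it is quoted verbatim from \cite[Lemma 3.1]{Pe} (note the ``otherl'' environment and the ``cf.'' citation)---and your argument is precisely the standard one behind that reference: the pointwise comparison $|1-\overline{a}z|\thickapprox 1-|a|$ on $S(a)$ gives the lower bound, while the upper bound follows from condition (\ref{1023-3}) of Lemma \ref{property of doub w} (whose upward stability in $\lambda$ the paper itself records just after that lemma) together with $\int_{S(a)}\o\,dA\thickapprox(1-|a|)\hat\o(a)$. All steps check out, including the handling of $a$ near the origin via the doubling property.
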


Now  we give the following lemma.

\begin{limma}\label{1021-1}
Suppose  $1<p<\infty$, $-1<s<\infty$,  and  $\eta$ is a radial weight in  $\mathcal{B}_{p, s}$. Let  $f\in A_{\eta}^p$ and let $h_{s,f}$ be  a bounded operator from $B_p(\eta_{[2-p]})$ to $L_\eta^p$.
Then
$$
\sup_{a\in\D} (1-|a|^2)|f(a)|<\infty.
$$
\end{limma}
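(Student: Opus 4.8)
The plan is to combine the duality $(L^p_\eta)^{*}=L^{p'}_\eta$ with the reproducing property of $P_s$, testing the boundedness hypothesis on a family of kernel powers whose exponents are split so that the resulting Hankel pairing reproduces $f(a)$ \emph{exactly}. First I would record that, since $(1-|z|^2)^{p-2}(\eta_{[2-p]})(z)=\eta(z)$, one has
$$
\|g\|_{B_p(\eta_{[2-p]})}^p\thickapprox|g(0)|^p+\ind |g'(z)|^p\eta(z)\,dA(z).
$$
Because $\eta$ is a radial weight in $\mathcal{B}_{p,s}$, Proposition 6 of \cite{PPR} gives $\eta\in\d\subset\hD$, and by the B\'ekoll\'e--Bonami duality \cite{BB} the dual weight $\nu:=\eta^{-p'/p}(1-|\cdot|^2)^{sp'}$ lies in $\mathcal{B}_{p',s}$, hence is radial and belongs to $\d\subset\hD$ as well. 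It then suffices to produce, for each $a\in\D$ (only $|a|$ near $1$ matters), a pair $g_a\in B_p(\eta_{[2-p]})$ and $\psi_a\in L^{p'}_\eta$ whose $\eta$--pairing $\langle u,v\rangle_\eta=\ind u\overline{v}\,\eta\,dA$ with $h_{s,f}g_a$ recovers $f(a)$.

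The heart of the argument is the reproducing step. I would fix exponents $\beta_1,\beta_2>0$ with $\beta_1+\beta_2=s+2$ and put
$$
g_a(z)=\left(\frac{1-|a|^2}{1-\overline{a}z}\right)^{\beta_1},\qquad
\phi_a(z)=\left(\frac{1-|a|^2}{1-\overline{a}z}\right)^{\beta_2},\qquad
\psi_a=\overline{\phi_a}\,\frac{(1-|\cdot|^2)^s}{\eta}.
$$
Using that $\eta$ is real, that $P_s$ is self-adjoint for $\langle\cdot,\cdot\rangle_s$, and that $\phi_a$ is analytic (so $P_s\phi_a=\phi_a$), together with the reproducing formula for $A^p_s$ and $\beta_1+\beta_2=s+2$, I expect
$$
\langle h_{s,f}g_a,\psi_a\rangle_\eta
=\overline{\ind f(\zeta)\,\overline{g_a(\zeta)\phi_a(\zeta)}\,(1-|\zeta|^2)^s\,dA(\zeta)}
=\frac{1}{s+1}(1-|a|^2)^{s+2}\,\overline{f(a)}.
$$
Hence, by the boundedness hypothesis and H\"older's inequality,
$$
(1-|a|^2)^{s+2}|f(a)|\lesssim \|h_{s,f}g_a\|_{L^p_\eta}\,\|\psi_a\|_{L^{p'}_\eta}
\lesssim \|g_a\|_{B_p(\eta_{[2-p]})}\,\|\psi_a\|_{L^{p'}_\eta}.
$$

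It then remains to estimate the two norms and to invoke the weight relations. A direct computation gives $\|g_a\|_{B_p(\eta_{[2-p]})}^p\thickapprox(1-|a|^2)^{\beta_1 p}\ind \eta(\zeta)|1-\overline{a}\zeta|^{-(\beta_1+1)p}dA(\zeta)$, which by (\ref{1023-3}) of Lemma \ref{property of doub w} applied to $\eta$ equals $\thickapprox \widehat{\eta}(a)(1-|a|)^{1-p}$, provided $(\beta_1+1)p-1>U(\eta)$. Likewise, writing out $\|\psi_a\|_{L^{p'}_\eta}^{p'}$ and recognizing the integrand as $|\phi_a|^{p'}\nu$, the same lemma applied to $\nu$ yields $\|\psi_a\|_{L^{p'}_\eta}\thickapprox(\widehat{\nu}(a))^{1/p'}(1-|a|)^{1/p'}$, provided $\beta_2 p'-1>U(\nu)$. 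Multiplying, the powers of $(1-|a|)$ cancel, leaving $(1-|a|^2)^{s+2}|f(a)|\lesssim(\widehat{\eta}(a))^{1/p}(\widehat{\nu}(a))^{1/p'}$. Finally, combining the $\mathcal{B}_{p,s}$ inequality over Carleson boxes with its H\"older reverse (so the two sides are comparable) gives $\widehat{\eta}(a)\,\widehat{\nu}(a)^{p-1}\thickapprox(1-|a|)^{p(s+1)}$; raising to the power $1/p$ shows $(\widehat{\eta}(a))^{1/p}(\widehat{\nu}(a))^{1/p'}\thickapprox(1-|a|)^{s+1}$, whence $(1-|a|^2)^{s+2}|f(a)|\lesssim(1-|a|)^{s+1}$, that is, $\sup_{a\in\D}(1-|a|^2)|f(a)|<\infty$.

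The main obstacle I anticipate is arranging the exponents so that \emph{both} doubling estimates apply simultaneously: one needs $\beta_1+\beta_2=s+2$ together with $(\beta_1+1)p-1>U(\eta)$ and $\beta_2 p'-1>U(\nu)$. Summing the lower bounds, this admissible window is nonempty exactly when $\tfrac{U(\eta)+1}{p}-1+\tfrac{U(\nu)+1}{p'}<s+2$, which holds once $U(\eta)<s+2$ and $U(\nu)<s+2$ (using $\tfrac1p+\tfrac1{p'}=1$); verifying these two bounds as consequences of the B\'ekoll\'e--Bonami membership of $\eta$ and of its dual weight $\nu$, and confirming that $\nu$ genuinely inherits the doubling class with the stated exponents, is the delicate point, together with the routine integrability checks needed to justify Fubini's theorem, the self-adjointness identity, and the reproducing formula.
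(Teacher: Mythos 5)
Your overall strategy --- test the boundedness of $h_{s,f}$ against a kernel-type function $g_a$, pair the result with a dual element via H\"older, and use the reproducing property of $P_s$ to recover the value of $f$ at $a$ --- is exactly the strategy of the paper. However, your specific implementation has a genuine gap at the point you yourself flag as delicate: the exponent window for $\beta_1,\beta_2$ can be empty, and the sufficient conditions you propose for its nonemptiness are not consequences of the hypotheses. Because you reproduce $f(a)$ itself, you are locked into the budget $\beta_1+\beta_2=s+2$, while the two doubling estimates force $\beta_1>\frac{U(\eta)+1}{p}-1$ and $\beta_2>\frac{U(\nu)+1}{p'}$. You claim these are compatible once $U(\eta)<s+2$ and $U(\nu)<s+2$, but neither bound follows from $\eta\in\mathcal{B}_{p,s}$: for instance $\eta(z)=(1-|z|^2)^{\alpha}$ is a radial $\mathcal{B}_{p,s}$ weight precisely when $-1<\alpha<sp+p-1$, so taking $s=0$ and $p>2$ one can have $U(\eta)=\alpha+1$ anywhere in $(2,p)$, violating $U(\eta)<s+2=2$. (For pure power weights the window happens to survive via the finer sum condition, but in general, using $\widehat{\eta}(a)\,\widehat{\nu}(a)^{p-1}\thickapprox(1-|a|)^{(s+1)p}$, your compatibility requirement is equivalent to $U(\eta)-L(\eta)<p$, which is not implied by $\eta\in\mathcal{B}_{p,s}$ when $s>0$, since that class admits oscillating radial weights whose exponent spread approaches $(s+1)p$.) There are also two smaller points to tidy: the term $|g_a(0)|^p$ in $\|g_a\|_{B_p(\eta_{[2-p]})}^p$ must be absorbed (it is, but only because of the same constraint on $\beta_1$), and $P_sf=f$ needs the justification the paper gives via $P_sf=\overline{h_{s,f}1}\in A^p_\eta$.

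The paper removes the obstruction by a differentiation trick rather than by restricting the weight: it tests against $g_a(z)=z^{n+1}/(1-\overline{a}z)^{n+1}$ and reproduces the \emph{derivative} $f^{(n+1)}(a)$ through an inserted kernel $(1-a\overline{w})^{-(\lambda+2)}dA_\lambda(w)$. This decouples the two exponents: $n$ is chosen large enough that $(n+2)p-1>U(\eta)$, and $\lambda$ is chosen large enough (independently) that the dual integral against $\tilde{\eta}^{-p'/p}$ converges, using the regularity of $\tilde{\eta}\in\mathcal{R}$. The conclusion $\sup_a(1-|a|^2)^{n+2}|f^{(n+1)}(a)|<\infty$ is then equivalent to the desired bound on $f$ itself. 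If you replace your fixed-budget pairing by this derivative version (equivalently, take $\beta_1=n+1$ and a dual kernel of order $\lambda+2$ with $\beta_1+\beta_2$ no longer tied to $s+2$), your argument goes through for every radial $\eta\in\mathcal{B}_{p,s}$.
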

\begin{proof}
Bear in mind that any radial weight in $\B_{p,s}$  belongs to $\d$.
By  Proposition \ref{w yiwan},  we have
\begin{align}
\|h_{s,f}g\|_{L^p(\D, \eta dA)}
&=\|P_s(f\overline{g})\|_{L^p(\D, \eta dA)} \nonumber\\
&=\|P_s(f\overline{g})\|_{A_\eta^p} \nonumber\\
&\thickapprox \|P_s(f\overline{g})\|_{A_{\tilde{\eta}}^p}
=\|h_{s,f}g\|_{L^p(\D,  \tilde{\eta}dA)} \label{19-2}
\end{align}
for all  $g\in B_p(\eta_{[2-p]})$.
For any $a\in \D$, set
$$
g_a(z)=\frac{z^{n+1}}{(1-\overline{a}z)^{n+1}}, \ \ \ z\in \D,
$$
where $n$ is a positive integer.
By Lemma \ref{test functions},  if $n$ is large enough,
\begin{align}
\|g_a\|_{B_p(\eta_{[2-p]})}\lesssim \frac{(1-|a|)^\frac{1}{p}\hat{\eta}(a)^\frac{1}{p}}{(1-|a|)^{n+2}}  \label{19-1}
\end{align}
for all $a\in \D$. Note that   $h_{s,f}$ is  bounded from $B_p(\eta_{[2-p]})$ to $L_\eta^p$, and $P_sf=\overline{h_{s,f}1}$ . Hence  $P_sf$  belongs to  $A^p_\eta$.   Then  $P_sf=f$. For any fixed $\lambda>-1$, we obtain
\begin{align*}
&f^{(n+1)}(a)\\
=&(s+1)\left( \int_\D  \frac{f(z)(1-|z|^2)^sdA(z)}{(1-a\overline{z})^{s+2}} \right)^{(n+1)}\\
=&C(n,s)(s+1)\int_\D \frac{\overline{z}^{n+1} f(z)}{(1-a\overline{z})^{n+1}}
  \frac{1}{(1-a\overline{z})^{2+s}} (1-|z|^2)^sdA(z)\\
=& C(n,s,\lambda) (s+1)\int_\D \frac{\overline{z}^{n+1} f(z)}{(1-a\overline{z})^{n+1}}
    \left(\int_\D \frac{dA_\lambda(w)}{(1-w\overline{z})^{2+s}(1-a\overline{w})^{\lambda+2}} \right)
   dA_s(z)\\
=&C(n,s,\lambda)  \int_\D \left((s+1)\int_\D \frac{\overline{z}^{n+1} f(z)}{(1-a\overline{z})^{n+1}}\frac{dA_s(z)}{(1-w\overline{z})^{2+s}}\right)
 \frac{dA_\lambda(w)}{(1-a\overline{w})^{\lambda+2}}\\
=&C(n,s,\lambda) \int_\D \overline{h_{s,f}(g_a)(w)}\frac{dA_\lambda(w)}{(1-a\overline{w})^{\lambda+2}},
\end{align*}
where $C(n,s)$ is a constant depending on $n$ and $s$,  and $C(n,s,\lambda)$ is a constant depending on $n$, $s$ and $\lambda$.
It follows from   H\"older's inequality, the boundedness  of $h_{s, f}$,  (\ref{19-2}) and  (\ref{19-1}) that
{\small
\begin{align}
&(1-|a|^2)^{n+2}|f^{(n+1)}(a)|  \nonumber\\
\lesssim &(1-|a|^2)^{n+2}\|h_{s,f}g_a\|_{L^p(\D,  \tilde{\eta}dA)}
 \left(\int_\D \frac{(1-|w|^2)^{{p'}\lambda}d A(w)}
{\tilde{\eta}(w)^\frac{{p'}}{p}|1-a\overline{w}|^{(\lambda+2){p'}}}\right)^\frac{1}{{p'}} \nonumber\\
\lesssim& \|h_{s,f}\|_{B_p(\eta_{[2-p]})\to L^p(\D, \eta dA)}(1-|a|)^{\frac{1}{p}}{\hat{\eta}}(a)^\frac{1}{p}
\left(\int_0^1  \frac{(1-r)^{{p'}\lambda}d r}
{\tilde{\eta}(r)^\frac{{p'}}{p}(1-|a|r)^{(\lambda+2){p'}-1}}\right)^\frac{1}{{p'}}  \label{1020-1}
\end{align} }
for all $a\in \D$.
Since $\tilde{\eta}\in\mathcal{R}$, there exist $-1<\alpha,\beta<\infty$ such that
$\frac{\tilde{\eta}(t)}{(1-t)^\alpha}$ and $\frac{\tilde{\eta}(t)}{(1-t)^\beta}$
are essentially  increasing and essentially  decreasing respectively (cf. \cite[p.  11]{PR2014}).
Note that $-\frac{p'\beta}{p}-2p'+2=(\beta+2)(1-p')<0$.
If $\lambda$ is large enough, then
\begin{align*}
&\int_0^{|a|}  \frac{(1-r)^{{p'}\lambda}d r}{{\tilde{\eta}}(r)^{p'/p} (1-|a|r)^{(\lambda+2){p'}-1}}\\
\lesssim& \left(\frac{(1-|a|)^\beta}{{\tilde{\eta}}(|a|)}\right)^{p'/p}
  \int_0^{|a|}  \frac{(1-r)^{{p'}\lambda-\frac{{p'}\beta}{p}}d r}{(1-|a|r)^{(\lambda+2){p'}-1}}   \\
\lesssim& \left(\frac{(1-|a|)^\beta}{{\tilde{\eta}}(|a|)}\right)^{p'/p}
  \int_0^{|a|}  (1-r)^{-\frac{{p'}\beta}{p}-2{p'}+1}d r \\
\lesssim& \frac{(1-|a|)^{-2{p'}+2}}{{\tilde{\eta}}(a)^{p'/p}},
\end{align*}
and
\begin{align*}
\int_{|a|}^1  \frac{(1-r)^{{p'}\lambda}d r}{{\tilde{\eta}}(r)^{p'/p} (1-|a|r)^{(\lambda+2){p'}-1}}
\leq& \left(\frac{(1-|a|)^\alpha}{{\tilde{\eta}}(|a|)}\right)^{p'/p}
  \int_{|a|}^1  \frac{(1-r)^{{p'}\lambda-\frac{{p'}\alpha}{p}}d r}{(1-|a|r)^{(\lambda+2){p'}-1}}   \\
\lesssim& \frac{(1-|a|)^{-2{p'}+2}}{{\tilde{\eta}}(a)^{p'/p}}
\end{align*}
for all $a\in \D$.
Combining this with   (\ref{1020-1}), we get
$$
\sup_{a\in\D} (1-|a|^2)^{n+2}|f^{(n+1)}(a)|<\infty,
$$
which is equivalent to
$$
\sup_{a\in\D}(1-|a|^2)|f(a)|<\infty.
$$
The proof is complete.
\end{proof}

The following theorem is the main result in this section and  the double integral estimates for Besov type spaces will be used in its proof.

\begin{theor}\label{3main}
Suppose $1<p<\infty$, $-1<s<\infty$, $f\in H(\D)$ and $\eta\in \B_{p, s}$ is a radial weight. Let $\eta,p,s$ satisfy  further one of the following conditions:
\begin{enumerate}[(a)]
  \item  $U(\eta)<p-1$,  $p\leq 2$,  and
         $U(\eta)-\frac{L(\eta)}{p-1}<p-1+\frac{sp}{p-1}$;
  \item  $U(\eta)<p-1$, $p\geq 2 $,  and $U(\eta)-L(\eta)<ps+1$;
  \item  $p-1\leq U(\eta)<ps+p$, \ $s>0$, and $L(\eta)>p-1-ps$.
\end{enumerate}
Then the following conditions are equivalent:
\begin{enumerate}[(i)]
\item  $h_{s,f}: B_p(\eta_{[2-p]}) \to L^p(\D, \eta dA)$ is a  bounded operator;
\item $h_F^{s}: B_p(\eta_{[2-p]}) \to S_p(\eta_{[2-p]}) $  is a  bounded operator, where $F$ is in $H(\D)$ satisfying  $F'=f$ on $\D$;
\item $d\mu(z)=|f(z)|^p\eta(z) dA(z)$ is a $p$-Carleson measure for $B_p(\eta_{[2-p]})$.
\end{enumerate}
\end{theor}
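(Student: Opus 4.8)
The plan is to prove the three statements equivalent through the cycle $(iii)\Rightarrow(i)$, $(i)\Leftrightarrow(ii)$, and the hard implication $(i)\Rightarrow(iii)$. Only the last of these will actually call on the double integral estimates of Section 2 and on the case hypotheses (a)--(c); the first two are comparatively soft and use only the B\'ekoll\'e--Bonami property and a differentiation identity.

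For $(iii)\Rightarrow(i)$ I would invoke the defining property of $\mathcal{B}_{p,s}$ directly. Since $\eta\in\mathcal{B}_{p,s}$, by \cite{BB} the Bergman projection $P_s$ is bounded on $L^p(\D,\eta\, dA)$, so
$$\|h_{s,f}g\|_{L^p(\D,\eta dA)}^p=\|P_s(f\overline g)\|_{A^p_\eta}^p\lesssim\int_\D|f\overline g|^p\eta\,dA=\int_\D|g|^p\,d\mu\lesssim\|g\|_{B_p(\eta_{[2-p]})}^p,$$
the final step being exactly (iii). No double integral estimate and none of (a)--(c) are needed here. The equivalence $(i)\Leftrightarrow(ii)$ rests on a differentiation identity for $\widetilde P_s$: differentiating its kernel in $z$ kills the term $1-(1-\overline wz)^{1+s}$ and leaves $(\widetilde P_su)'=P_s(\partial u/\partial w)$. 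Applying this to $u=F\overline g$, where $\partial(F\overline g)/\partial w=F'\overline g=f\overline g$, gives $(\widetilde P_s(F\overline g))'=P_s(f\overline g)$. As $h_F^sg=\overline{\widetilde P_s(F\overline g)}$ is anti-analytic, $|\nabla(h_F^sg)|\thickapprox|(\widetilde P_s(F\overline g))'|=|h_{s,f}g|$ pointwise, so that $\|h_F^sg\|_{S_p(\eta_{[2-p]})}^p\thickapprox|F(0)g(0)|^p+\|h_{s,f}g\|_{L^p(\D,\eta dA)}^p$. Since point evaluation at the origin is bounded on $B_p(\eta_{[2-p]})$, the constant term is dominated by $\|g\|_{B_p(\eta_{[2-p]})}^p$, and the two boundedness assertions coincide.

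The main obstacle is $(i)\Rightarrow(iii)$, and this is where Section 2 enters. First, Lemma \ref{1021-1} converts the hypothesis into the growth bound $\sup_{a\in\D}(1-|a|^2)|f(a)|<\infty$. The strategy is then to reduce (iii) to the Carleson box estimate $\mu(S(a))\lesssim\hat\eta(a)(1-|a|)^{1-p}$. On the one hand, the double integral estimate of Theorem \ref{2main}, which applies because $(\eta_{[2-p]})_{[p-2]}=\eta\in\hat{\mathcal D}$ (any radial weight in $\mathcal{B}_{p,s}$ lies in $\mathcal D$), localizes $\|g\|_{B_p(\eta_{[2-p]})}^p$ into a form to which the measure characterization of Theorem \ref{CM for Apw} can be adapted, yielding the box estimate as the correct description of $p$-Carleson measures for $B_p(\eta_{[2-p]})$. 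On the other hand, one tests the operator bound (i) against the normalized functions $F_{a,p}$ of Lemma \ref{test functions}, using the reproducing identity $\langle P_s(f\overline g),h\rangle_{A^2_s}=\langle f,gh\rangle_{A^2_s}$ (valid for analytic $h$) together with the norm equivalence $\|\cdot\|_{A^p_\eta}\thickapprox\|\cdot\|_{A^p_{\tilde\eta}}$ of Proposition \ref{w yiwan} to bound $\mu(S(a))$ from below by the operator norm.

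The purpose of the three cases (a)--(c) is exactly to place the parameters $\sigma,\tau$ in the admissible ranges of Theorem \ref{2main} and Proposition \ref{R sigma tau}, and to keep the auxiliary kernel integrals estimated through Lemma \ref{well-known estimate} and Lemma \ref{Forelli-Rudin type  estimates} convergent; the split according to the size of $U(\eta)$ relative to $p-1$ and to $ps+p$ records precisely which of these estimates is binding and forces the sign of the relevant exponents. I expect the genuinely delicate step to be the lower bound extracting $\mu(S(a))$ from $P_s(f\overline{F_{a,p}})$, since it requires controlling the off-diagonal contribution of the weighted Bergman kernel over $S(a)$; this is the point that simultaneously needs the B\'ekoll\'e--Bonami duality for $P_s$ and the sharp exponent bookkeeping encoded in (a)--(c), and it is also where the interplay with $L(\eta)$ (via Proposition \ref{R sigma tau}) becomes essential.
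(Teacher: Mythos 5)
Your treatment of $(iii)\Rightarrow(i)$ and $(i)\Leftrightarrow(ii)$ matches the paper: boundedness of $P_s$ on $L^p(\D,\eta\,dA)$ from the B\'ekoll\'e--Bonami hypothesis for the first, and the computation $\partial(h_F^sg)/\partial\overline z=h_{s,f}g$, $\partial(h_F^sg)/\partial z=0$, hence $|\nabla h_F^sg|\thickapprox|h_{s,f}g|$, for the second. The problem is your plan for $(i)\Rightarrow(iii)$, which contains a genuine gap.

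You propose to reduce (iii) to the Carleson box estimate $\mu(S(a))\lesssim\hat\eta(a)(1-|a|)^{1-p}$ and to obtain that estimate by testing (i) against the functions $F_{a,p}$ of Lemma \ref{test functions}. This cannot work as stated: the simple box condition of Theorem \ref{CM for Apw} characterizes $p$-Carleson measures for the \emph{Bergman} space $A^p_\o$, not for the Besov type space $B_p(\eta_{[2-p]})$. For Besov/Dirichlet type spaces the box condition is necessary but in general far from sufficient (the correct characterizations are of Stegenga/Arcozzi--Rochberg--Sawyer type, as the paper itself notes by citing \cite{ARS}), so even if your testing argument produced the box bound, you would not have proved (iii). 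The double integral estimate of Theorem \ref{2main} does not ``localize'' the Besov norm into a Bergman norm in a way that restores a box characterization; in the paper Theorem \ref{CM for Apw} is applied to $|g'|^p$ inside the proof of Theorem \ref{2main}, which is a different statement.

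The idea you are missing is the paper's commutator-type decomposition. Since $P_sf=f$ (a consequence of (i) applied to $g=1$), one has the identity
\begin{equation*}
f(z)\overline{g(z)}-\overline{h_{s,f}g(z)}=(s+1)\int_\D \frac{f(w)\,\overline{g(z)-g(w)}}{(1-\overline{w}z)^{2+s}}\,dA_s(w),
\end{equation*}
and the whole of $(i)\Rightarrow(iii)$ reduces to the claim $\|f\overline g-\overline{h_{s,f}g}\|_{L^p(\D,\tilde\eta\,dA)}\lesssim\|g\|_{B_p(\tilde\eta_{[2-p]})}$; combined with the assumed boundedness of $h_{s,f}$ and Proposition \ref{w yiwan} this gives $\|fg\|_{L^p(\D,\eta\,dA)}\lesssim\|g\|_{B_p(\eta_{[2-p]})}$ directly, which \emph{is} the definition of a $p$-Carleson measure --- no box characterization is ever needed. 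The claim itself is proved by applying H\"older's inequality to the right-hand side of the identity (using $M_f=\sup_a(1-|a|^2)|f(a)|<\infty$ from Lemma \ref{1021-1} and $f\in A^p_{\tilde\eta}$ to absorb the powers of $|f(w)|$), which produces exactly a double integral of the form \eqref{18-1} in the variables $(z,w)$ with weight $\o=\tilde\eta_{[2-p]}$ and explicit exponents $\sigma,\tau$ determined by $p$, $s$, $U(\eta)$, $L(\eta)$; Proposition \ref{R sigma tau} then bounds it by $\|g\|^p_{B_p(\tilde\eta_{[2-p]})}$. The three cases (a)--(c) correspond to three different H\"older splittings needed to keep $\sigma$ and $\tau$ in the admissible range \eqref{07231} --- so your intuition about their role is right, but they enter through this error-term estimate, not through a test-function lower bound on $\mu(S(a))$.
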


\begin{proof}
$(i)\Leftrightarrow (ii)$.  For $g\in \mathcal P$, it is easy to check  that for any $z\in \D$,
$$
\frac{\partial h_F^{s}g}{\partial z}(z)=0
$$ and
\begin{align*}
\frac{\partial h_F^{s}g}{\partial \overline{z}}(z)&=(s+1)\overline{\ind \overline{g(w)}f(w)\frac{(1-|w|^2)^s}{(1-\overline{w}z)^{s+2}} dA(w)}\\
&=h_{s,f} g(z).
\end{align*}
Hence
$$
|\nabla  h_F^{s}g(z)|\thickapprox |h_{s,f} g(z)|
$$
for all $z\in \D$. Then the equivalence between (i) and (ii) follows.

$(iii)\Rightarrow (i)$.  Suppose $\mu$ is a $p$-Carleson measure for $B_p(\eta_{[2-p]})$. Since $\eta\in \B_{p,s}$, the Bergman projection $P_s: L^p(\D, \eta dA) \to A_\eta^p$ is bounded. Then, for any $g\in B_p(\eta_{[2-p]})$, we get
\begin{align*}
\|h_{s,f}g\|_{L^p(\D, \eta dA)}^p
&=\|P_s(f\overline{g})\|_{L^p(\D, \eta dA)}^p
\lesssim \|fg\|_{L^p(\D, \eta dA)}^p
\lesssim \|g\|_{B_p(\eta_{[2-p]})}^p.
\end{align*}
Thus $h_{s,f}$ is bounded.

$(i)\Rightarrow (iii)$.
Suppose $h_{s,f}:B_p(\eta_{[2-p]}) \to L^p(\D, \eta dA)$ is bounded.
Note that  any radial weight in $\B_{p,s}$  belongs to $\d$.  By (\ref{19-2}), Proposition \ref{w yiwan} and the remark after it, we have
$$\tilde{\eta}\in\mathcal{R}, \,\,\,L(\tilde{\eta})= L(\eta), \,\,\, U(\tilde{\eta})=U(\eta),$$
and
$$\|h_{s,f}\|_{B_p(\tilde{\eta}_{[2-p]}) \to L^p(\D, \tilde{\eta} dA) }\thickapprox \|h_{s,f}\|_{B_p(\eta_{[2-p]}) \to  L^p(\D, \eta dA)}. $$
Thus $h_{s,f}:\ B_p(\tilde{\eta}_{[2-p]})\to L^p(\D, \tilde{\eta} dA)$ is also a bounded operator.
Here,
$$\tilde{\eta}_{[2-p]}(z)=(1-|z|^2)^{2-p}\tilde{\eta}(z)=(1-|z|^2)^{2-p}\frac{\hat{\eta}(z)}{1-|z|},\,\,\,z\in\D.$$
We claim that
$$
\|f\overline{g}-\overline{h_{s,f}g}\|_{L^p(\D, \tilde{\eta} dA)}\lesssim \|g\|_{B_p(\tilde{\eta}_{[2-p]})}
$$
for all $g\in B_p(\tilde{\eta}_{[2-p]})$. By this claim, the boundedness of $h_{s,f}: B_p(\tilde{\eta}_{[2-p]}) \to L^p(\D, \tilde{\eta} dA)$, and Proposition \ref{w yiwan}, we see that
 $\mu$ is a $p$-Carleson measure for  $B_p(\eta_{[2-p]})$. Thus,  it suffices to prove the claim for case (a), case  (b) and case (c) respectively.
As explained in Lemma \ref{1021-1}, $P_sf=f$. This, together with (i), yields
\begin{align}\label{1021-2}
f(z)\overline{g(z)}-\overline{h_{s,f}g(z)}=(s+1)\int_\D \frac{f(w)\overline{(g(z)-g(w))}}{(1-\overline{w}z)^{2+s}}dA_s(w).
\end{align}
for any $z\in \D$.

Case (a).   Suppose
\begin{align}\label{i}
U(\eta)<p-1,\,\,\,\,  1<p\leq 2,\,\,\, U(\eta)-\frac{L(\eta)}{p-1}<p-1+\frac{sp}{p-1}.
\end{align}
For     $1<p<2$,    by  $L(\eta)\leq U(\eta)$ and $U(\eta)-\frac{L(\eta)}{p-1}<p-1+\frac{sp}{p-1}$, we get   $U(\eta)>\frac{(p-1)^2+sp}{p-2}$.
Note that  $U(\eta)<p-1$.  Then   $\frac{(p-1)^2+sp}{p-2}<p-1$; that is, $\frac{ps}{p-1}>-1$. Hence
$U(\eta)<p-1<p+\frac{ps}{p-1}$.
For  $p=2$, condition  (\ref{i}) also yields
$$U(\eta)<1+2s+L(\eta)\leq 1+2s+U(\eta)<2+2s=p+\frac{ps}{p-1}.$$
Thus for $1<p\leq 2$,
$$
U(\eta)<p-1<p+\frac{ps}{p-1}.
$$
Due to $s>-1$, it is clear that $U(\eta)<p+s$.
By a simple calculation, $U(\eta)<p-1$ is equivalent to
\begin{equation}\label{07212}
(2+s)p<4+(p-2+s)p-(U(\eta)-1)(p-1)+p-2;
\end{equation}
$U(\eta)<p+s$ is equivalent to
\begin{equation}\label{07213}
(p-2+s)p-(U(\eta)-1)(p-1)>U(\eta)-p-1;
\end{equation}
$U(\eta)<p+\frac{ps}{p-1}$ is equivalent to
\begin{equation}\label{07214}
(p-2+s)p-(U(\eta)-1)(p-1)>-1;
\end{equation}
$U(\eta)-\frac{L(\eta)}{p-1}<p-1+\frac{sp}{p-1}$ is equivalent to
\begin{equation}\label{07215}
(p-2+s)p-(U(\eta)-1)(p-1)>p-2-L(\eta).
\end{equation}
Note that $p>1$. By (\ref{07212}), (\ref{07213}), (\ref{07214}), (\ref{07215}) and $L(\eta)>0$,  there exits $\varepsilon>0$ such that
\begin{align} \label{07216}
(2+s)p\leq  4+(p-2+s)p-(U(\eta)-1+\varepsilon)(p-1)+p-2,
\end{align}
\begin{align}\label{07217}
(p-2+s)p-(U(\eta)-1+\varepsilon)(p-1)>\max\{U(\eta)-p-1,-1\},
\end{align}
and
\begin{align}\label{07218}
\min\left\{(p-2+s)p-(U(\eta)-1+\varepsilon)(p-1), p-2\right\}>p-2-L(\eta).
\end{align}
$U(\eta)>0$ and (\ref{07214}) yield
\begin{align}\label{07219}
p-2+s>\frac{(U(\eta)-1)(p-1)-1}{p}>-1
\end{align}
Because of  $\tilde{\eta}\in\mathcal{R}$, for any   positive number  $\varepsilon_1$,
$\frac{\tilde{\eta}(t)}{(1-t)^{U(\eta)-1+\varepsilon_1}}$ is essentially increasing on $[0,1)$. Choose $\varepsilon_1\leq \varepsilon$.   Combining this with  (\ref{07217}), we obtain
\begin{align}\label{072110}
&\ind \frac{(1-|w|^2)^{(p-2+s)p}}{\tilde{\eta}(w)^{p-1}}dA(w) \nonumber \\
\lesssim & \ind  (1-|w|^2)^{(p-2+s)p-(U(\eta)-1+\varepsilon_1)(p-1)} dA(w)
<\infty.
\end{align}
It is known  that $f=\overline{h_{s,f}1}\in A_{\eta}^p$. In fact, $f$ is also in $A_{\tilde{\eta}}^p$.   By  Lemma \ref{1021-1},
\begin{align}\label{19-3}
M_f:=\sup_{a\in\D}(1-|a|^2)|f(a)|<\infty.
\end{align}
By   $1<p\leq 2$,   (\ref{07219}), (\ref{072110}), (\ref{19-3}), the H\"older inequality and the essentially increasing property of  $\frac{\tilde{\eta}(t)}{(1-t)^{U(\eta)-1+\varepsilon}}$ on $[0,1)$, we deduce
{\small
\begin{align}
&\left|\int_\D \frac{f(w)\overline{(g(z)-g(w))}}{(1-\overline{w}z)^{2+s}}dA_s(w)\right|^p \nonumber \\
\leq& M_f^{(2-p)p} \left(\int_\D \frac{|f(w)|^{p-1}|g(z)-g(w)|}{|1-\overline{w}z|^{2+s}}dA_{p-2+s}(w)\right)^p \nonumber  \\
\leq& M_f^{(2-p)p} \|f\|_{A_{\tilde{\eta}}^p}^{p(p-1)} \int_\D  \frac{|g(z)-g(w)|^p(1-|w|^2)^{(p-2+s)p}}{|1-\overline{w}z|^{(2+s)p}{\tilde{\eta}}(w)^{p-1}}  dA(w) \nonumber \\
\lesssim&  M_f^{(2-p)p} \|f\|_{A_{\tilde{\eta}}^p}^{p(p-1)}  \int_\D \frac{|g(z)-g(w)|^p(1-|w|^2)^{(p-2+s)p-(U(\eta)-1+\varepsilon)(p-1)}}{|1-\overline{w}z|^{(2+s)p}}  dA(w).  \label{07203}
\end{align}
}
Then, it follows from (\ref{1021-2}), (\ref{07216}) and  (\ref{07203}) that
\begin{align}
& \|f\overline{g}-\overline{h_{s,f}g}\|_{L^p(\D, \tilde{\eta} dA)}^p \nonumber \\
\lesssim & \int_\D\int_\D \frac{|g(z)-g(w)|^p(1-|w|^2)^{(p-2+s)p-(U(\eta)-1+\varepsilon)(p-1)}}{|1-\overline{w}z|^{(2+s)p}}  {\tilde{\eta}}(z)dA(w) dA(z) \nonumber\\
\lesssim & \int_\D\int_\D \frac{|g(z)-g(w)|^p(1-|w|^2)^{(p-2+s)p-(U(\eta)-1+\varepsilon)(p-1)}}{|1-\overline{w}z|^{4+(p-2+s)p-(U(\eta)-1+\varepsilon)(p-1)+p-2}}  {\tilde{\eta}}(z)dA(w) dA(z). \label{072111}
\end{align}
 Bear  in mind (\ref{07217}) and (\ref{07218}).
Applying  Proposition \ref{R sigma tau} by setting  $\o(z)=\tilde{\eta}(z)(1-|z|^2)^{2-p}$, $\sigma=p-2$ and $\tau=(p-2+s)p-(U(\eta)-1+\varepsilon)(p-1)$, we see that
\begin{align*}
&\int_\D\int_\D \frac{|g(z)-g(w)|^p(1-|w|^2)^{(p-2+s)p-(U(\eta)-1+\varepsilon)(p-1)}}{|1-\overline{w}z|^{4+(p-2+s)p-(U(\eta)-1+\varepsilon)(p-1)+p-2}}  {\tilde{\eta}}(z)dA(w) dA(z)\\
\lesssim & \|g\|_{B_p({\tilde{\eta}}_{[2-p]})}^p.
\end{align*}
This, together with (\ref{072111}), yields that our claim holds.

 Case (b). The result for  $p=2$ has been proved in  case (a). Now  suppose
\begin{align}\label{ii}
U(\eta)<p-1,\,\,\,  p> 2,\,\,\,s>-1, \ \  U(\eta)-L(\eta)<ps+1.
\end{align}
 Note that $0<L(\eta)\leq U(\eta)<\infty$.
Then $ps> U(\eta)-L(\eta)-1\geq -1$, and hence   $U(\eta)<p-1<p+ps$. Also, $2U(\eta)<2p-2<2p+ps$.
By these inequalities and  (\ref{ii}), there exist real numbers $x$, $y$, and positive number $\varepsilon$  such that
\begin{align}\label{07221}
U(\eta)\leq p-1-\varepsilon,
\end{align}
\begin{align}\label{07222}
&\max\{U(\eta)-p+1+\varepsilon, \ \ 1-p\} \nonumber \\
<&  (p'+x)(p-1) \nonumber \\
<& \min\left\{ps+p+1-U(\eta),ps+1,ps-p+2+L(\eta)\right\},
\end{align}
and
\begin{align}\label{07223}
 2p-3-U(\eta)-\varepsilon<(y-(p'+x))(p-1)<2p-3-U(\eta)+L(\eta)-\varepsilon.
 \end{align}
 From  (\ref{07222}), $p'+x>-1$ and $ps-(p-1)(p'+x)>-1$. Using   H\"older's inequality, we get
\begin{align}\label{07224}
&\left|\int_\D \frac{f(w)\overline{(g(z)-g(w))}}{(1-\overline{w}z)^{2+s}}dA_s(w)\right|^p \nonumber \\
\leq&\left(\int_\D \frac{|f(w)|^{p'}}{|1-\overline{w}z|^y} dA_{p'+x}(w)\right)^{p-1} \int_\D \frac{|g(z)-g(w)|^pdA_{ps-(p-1)(p'+x)}(w)}{|1-\overline{w}z|^{(2+s)p-(p-1)y}}.
\end{align}
Note that $\frac{1}{\f{p-1}{p-2}}+\f{1}{p-1}=1$, $\f{p-1}{p-2}>1$ and $p-1>1$. It follows from the H\"older inequality again that
\begin{align}\label{07225}
&\left(\int_\D \frac{|f(w)|^{p'}}{|1-\overline{w}z|^y} dA_{p'+x}(w)\right)^{p-1} \nonumber \\
\leq& \|f\|_{A_{\tilde{\eta}}^p}^p
\left(\int_\D \left(\frac{(1-|w|^2)^{p'+x}}{|1-\ol{w}z|^y}\right)^\frac{p-1}{p-2}{\tilde{\eta}}(w)^{-\frac{1}{p-2}}dA(w)\right)^{p-2}.
\end{align}
Because of   $\tilde{\eta}\in\mathcal{R}$,   $\frac{\tilde{\eta}(t)}{(1-t)^{U(\eta)-1+\varepsilon}}$ is essentially increasing on $[0,1)$. Hence
\begin{align}\label{07226}
\sup\limits_{w\in\D}\frac{(1-|w|)^{U(\eta)-1+\varepsilon}}{\tilde{\eta}(w)}<\infty.
\end{align}
Due to  (\ref{07222}) and (\ref{07223}),  we get
\begin{align}\label{1104-2}
\frac{(p'+x) (p-1)-(U(\eta)-1+\varepsilon)}{p-2}>-1,
\end{align}
and
\begin{align}\label{1104-3}
\frac{y(p-1)}{p-2}-\frac{(p'+x) (p-1)-(U(\eta)-1+\varepsilon)}{p-2}>2,
\end{align}
By (\ref{07225}), (\ref{07226}), (\ref{1104-2}), (\ref{1104-3}) and Lemma \ref{well-known estimate}, one gets
\begin{align*}
&\left(\int_\D \frac{|f(w)|^{p'}}{|1-\overline{w}z|^y} dA_{p'+x}(w)\right)^{p-1}\\
\lesssim&\|f\|_{A_{\tilde{\eta}}^p}^p
\left(\int_\D \frac{(1-|w|^2)^{\frac{(p'+x)(p-1)-(U(\eta)-1+\varepsilon)}{p-2}}}{|1-\ol{w}z|^\frac{y(p-1)}{p-2}}dA(w)\right)^{p-2}\\
\lesssim& \frac{\|f\|_{A_{\tilde{\eta}}^p}^p }{(1-|z|)^{(y-(p'+x))(p-1)+(U(\eta)-1+\varepsilon)-2(p-2)}} .
\end{align*}
Combining this with (\ref{07224}), we obtain
\begin{align} \label{07227}
&\int_\D \left|\int_\D \frac{f(w)\overline{(g(z)-g(w))}}{(1-\overline{w}z)^{2+s}}dA_s(w)\right|^p \tilde{\eta}(z)dA(z) \nonumber \\
\lesssim & \|f\|_{A_{\tilde{\eta}}^p}^p \int_\D
\int_\D \frac{|g(z)-g(w)|^p}{|1-\overline{w}z|^{(2+s)p-(p-1)y}}dA_{ps-(p-1)(p'+x)}(w) \nonumber \\
&\times \tilde{\eta}(z)dA_{2(p-2)-(y-(p'+x))(p-1)-(U(\eta)-1+\varepsilon)}(z).
\end{align}
Set   $\o(z)=\tilde{\eta}(z)(1-|z|^2)^{2-p}$,  $\sigma =3(p-2)-(y-(p'+x))(p-1)-(U(\eta)-1+\varepsilon)$,  and $\tau =ps-(p-1)(p'+x)$. Note that (\ref{07221}) holds if and only if
$$(2+s)p-(p-1)y \leq 4+\sigma+\tau. $$
Then (\ref{07227}) gives
\begin{align} \label{07228}
&\int_\D \left|\int_\D \frac{f(w)\overline{(g(z)-g(w))}}{(1-\overline{w}z)^{2+s}}dA_s(w)\right|^p \tilde{\eta}(z)dA(z) \nonumber \\
\lesssim & \|f\|_{A_{\tilde{\eta}}^p}^p \int_\D \int_\D \frac{|g(z)-g(w)|^p}{|1-\overline{w}z|^{4+\sigma+\tau}}\o(z) dA_\tau(w)dA_\sigma(z).
\end{align}
By   (\ref{07222}) and (\ref{07223}), condition (\ref{07231}) in Proposition \ref{R sigma tau} holds. By (\ref{07223}),  $L(\eta)=L(\tilde{\eta})$,  and $\tilde{\eta}\in\mathcal{R}$, we get
\begin{align*}
\ind \o_{[\sigma]}(z) dA(z)&=\ind \tilde{\eta}(z)(1-|z|^2)^{2(p-2)-(y-(p'+x))(p-1)-(U(\eta)-1+\varepsilon)}   dA(z)  \\
&\lesssim \ind \tilde{\eta}(z)(1-|z|^2)^{-L(\eta)}  dA(z)<\infty,
\end{align*}
which means that $\o_{[\sigma]}$ is a weight. Thus,  Proposition \ref{R sigma tau} yields
\begin{align} \label{07229}
\int_\D \int_\D \frac{|g(z)-g(w)|^p}{|1-\overline{w}z|^{4+\sigma+\tau}}\o(z) dA_\tau(w)dA_\sigma(z)
\lesssim \|g\|_{B_p({\tilde{\eta}}_{[2-p]})}^p.
\end{align}
From (\ref{1021-2}), (\ref{07228}) and (\ref{07229}),  our claim holds.

 Case (c).   Note that
\begin{align*}
  U(\eta)<ps+p,\,\,\,p>1,\,\,\,s>0,\,\,\, L(\eta)>p-1-ps.
\end{align*}
Then there exits a real number $x$ satisfying
\begin{equation}\label{0724-1}
-1<x<\min\left\{\frac{ps-U(\eta)+1}{p-1}, \frac{ps-p+1}{p-1}, \f{ps-2p+2+L(\eta)}{p-1}\right\}.
\end{equation}
Clearly, there is another real number $y$ such that
\begin{equation}\label{0724-2}
0<y-x-2<\min  \left\{\frac{p}{p-1},  \f{L(\eta)}{p-1}\right\}.
\end{equation}
Note that  $p'+x>x>-1$ and $ps-p-(p-1)x>-1$.
 From   H\"older's inequality, (\ref{19-3}), (\ref{0724-2}),  and Lemma \ref{well-known estimate},  one gets
\begin{align}\label{0724-3}
&\left|\int_\D \frac{f(w)\overline{(g(z)-g(w))}}{(1-\overline{w}z)^{2+s}}dA_s(w)\right|^p \nonumber \\
\leq&\left(\int_\D \frac{|f(w)|^{p'}}{|1-\overline{w}z|^y} dA_{{p'}+x}(w)\right)^{p-1}
\int_\D \frac{|g(z)-g(w)|^p}{|1-\overline{w}z|^{(2+s)p-(p-1)y}}dA_{ps-p-(p-1)x}(w) \nonumber \\
\lesssim& M_f^p  \left(\int_\D \frac{dA_{x}(w)}{|1-\overline{w}z|^y} \right)^{p-1}
\int_\D \frac{|g(z)-g(w)|^p}{|1-\overline{w}z|^{(2+s)p-(p-1)y}}dA_{ps-p-(p-1)x}(w) \nonumber \\
\lesssim& M_f^p  (1-|z|)^{-(y-x-2)(p-1)} \int_\D \frac{|g(z)-g(w)|^p}{|1-\overline{w}z|^{(2+s)p-(p-1)y}}dA_{ps-p-(p-1)x}(w).
\end{align}
Set  $\tau=ps-p-(p-1)x$, $\sigma=p-2-(y-x-2)(p-1)$ and $\o(z)=\tilde{\eta}(z)(1-|z|^2)^{2-p}$. Then
$(2+s)p-(p-1)y=4+\tau+\sigma$. Also, (\ref{0724-1}) and (\ref{0724-2}) yield condition (\ref{07231}) in Proposition \ref{R sigma tau} holds. Note that $0<y-x-2<\f{L(\eta)}{p-1}$ and $\tilde{\eta}\in\mathcal{R}$. Then
\begin{align*}
\ind \o_{[\sigma]}(z) dA(z)&=\ind \tilde{\eta}(z)(1-|z|^2)^{-(y-x-2)(p-1)}   dA(z)  \\
&\lesssim \ind \tilde{\eta}(z)(1-|z|^2)^{-L(\eta)}  dA(z)<\infty;
\end{align*}
that is, $\o_{[\sigma]}$ is a weight.  By (\ref{0724-3}) and applying  Proposition \ref{R sigma tau}, we have
\begin{align*}
&\int_\D    \left|\int_\D \frac{f(w)\overline{(g(z)-g(w))}}{(1-\overline{w}z)^{2+s}}dA_s(w)\right|^p  {\tilde{\eta}}(z)dA(z)\\
\lesssim & M_f^p \int_\D \int_\D \frac{|g(z)-g(w)|^p}{|1-\overline{w}z|^{(2+s)p-(p-1)y}}dA_{ps-p-(p-1)x}(w)  {\tilde{\eta}}(z)dA_{-(y-x-2)(p-1)}(z)\\
\thickapprox &   M_f^p  \int_\D \int_\D \frac{|g(z)-g(w)|^p}{|1-\overline{w}z|^{4+\tau+\sigma}}\o(z) dA_{\tau}(w)  dA_{\sigma}(z)\\
\lesssim& M_f^p \int_\D |g^\prime(z)|^p{\tilde{\eta}}(z)dA(z).
\end{align*}
Combining this with (\ref{1021-2}), we see that  our claim also holds for this case.    The proof is complete.
\end{proof}

\noindent{\bf  Remark.  }  First,   the proof of case (c) is valid for $U(\eta)<ps+p,\,\,\,p>1,\,\,\,s>0,\,\,\, L(\eta)>p-1-ps$.  It is easy to see that the case of $U(\eta)<p-1,\,\,\,p>1,\,\,\,s>0,\,\,\, L(\eta)>p-1-ps$ is  contained in case (a)  and case (b). Thus we use the now  version of case (c)   in the expressions of  Theorem  \ref{3main}.  In addition, by the proof  of  Theorem  \ref{3main},  $h_{s,f}: B_p(\eta_{[2-p]}) \to L^p(\D, \eta dA)$ is a  bounded operator if and only if $|f(z)|^p\tilde{\eta}(z) dA(z)$ is a $p$-Carleson measure for $B_p(\tilde{\eta}_{[2-p]})$.
When $p>1$ and $\o$ is a B\'ekoll\'e-Bonami weight  satisfying  one more condition including  the case of   $\o$ in $\mathcal R$,  the  description of $p$-Carleson measures for  $B_p(\o)$  can be found  in \cite{ARS}.

Checking the proof of  case (a) and case (b) in  (i)$\Rightarrow$(iii) of Theorem  \ref{3main},
if $\eta$ satisfies that $\frac{\hat{\eta}(t)}{(1-t)^{U(\eta)}}$ is essentially  increasing  on [0, 1), we can choose $\varepsilon=0$ in the proof.
For such  case,  the condition  $U(\eta)<p-1$ in both case (a) and case (b) in Theorem  \ref{3main} can be replaced by $U(\eta)\leq p-1$.  We state this result as follows.

\begin{prop}\label{0728-1}
Let  $1<p<\infty$, $-1<s<\infty$, $f\in H(\D)$ and  let $\eta\in \B_{p, s}$ be a radial weight such that $\frac{\hat{\eta}(t)}{(1-t)^{U(\eta)}}$ is essentially  increasing  on [0, 1).   Suppose  $\eta,p,s$ satisfies   further one of the following conditions:
\begin{enumerate}[(a)]
  \item  $U(\eta)\leq p-1$,  $p\leq 2$,  and
         $U(\eta)-\frac{L(\eta)}{p-1}<p-1+\frac{sp}{p-1}$;
  \item  $U(\eta)\leq p-1$, $p\geq 2 $,  and $U(\eta)-L(\eta)<ps+1$.
\end{enumerate}
Then the following conditions are equivalent:
\begin{enumerate}[(i)]
\item  $h_{s,f}: B_p(\eta_{[2-p]}) \to L^p(\D, \eta dA)$ is a  bounded operator;
\item $h_F^{s}: B_p(\eta_{[2-p]}) \to S_p(\eta_{[2-p]}) $  is a  bounded operator, where $F$ is in $H(\D)$ satisfying  $F'=f$ on $\D$;
\item $d\mu(z)=|f(z)|^p\eta(z) dA(z)$ is a $p$-Carleson measure for $B_p(\eta_{[2-p]})$.
\end{enumerate}
\end{prop}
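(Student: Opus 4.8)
The plan is to deduce Proposition~\ref{0728-1} as a sharpening of Theorem~\ref{3main}, re-running the same argument with the auxiliary parameter $\varepsilon$ set equal to $0$. First I would note that the equivalence $(i)\Leftrightarrow(ii)$ and the implication $(iii)\Rightarrow(i)$ are proved word for word as in Theorem~\ref{3main}: the former uses only the identities for $\partial h_F^{s}g/\partial z$ and $\partial h_F^{s}g/\partial\overline z$, while the latter uses only the boundedness of the Bergman projection $P_s$ on $L^p(\D,\eta\,dA)$ together with the bound $\|fg\|_{L^p(\D,\eta dA)}\lesssim\|g\|_{B_p(\eta_{[2-p]})}$ supplied by the $p$-Carleson hypothesis. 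Since neither invokes $U(\eta)<p-1$ nor $\varepsilon$, both carry over unchanged.

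Thus everything hinges on $(i)\Rightarrow(iii)$. Here I would first translate the new hypothesis: because $\tilde\eta(t)=\hat\eta(t)/(1-t)$ and, by the Remark after Proposition~\ref{w yiwan}, $\hat{\tilde\eta}(r)\thickapprox\hat\eta(r)$ with $U(\tilde\eta)=U(\eta)$, the assumption that $\hat\eta(t)/(1-t)^{U(\eta)}$ is essentially increasing is exactly the statement that $\tilde\eta(t)/(1-t)^{U(\eta)-1}$ is essentially increasing on $[0,1)$. This is precisely the $\varepsilon=0$ instance of the essentially-increasing property that was used (for each $\varepsilon>0$) in the proof of Theorem~\ref{3main}, e.g.\ in (\ref{072110}), (\ref{07203}) and (\ref{07226}). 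Consequently every estimate in cases (a) and (b) there may be carried out with $\varepsilon=0$, and the only remaining task is to check that the inequalities selecting the exponents still have room at the boundary.

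The key step is therefore to verify solvability of the exponent system when $\varepsilon=0$ and $U(\eta)\le p-1$. In case (a) this amounts to: (\ref{07216}) now holds as the non-strict inequality $U(\eta)\le p-1$, while the strict inequalities (\ref{07217}) and (\ref{07218}) persist. These reduce to $U(\eta)<p+s$, $U(\eta)<p+ps/(p-1)$, $U(\eta)-L(\eta)/(p-1)<p-1+sp/(p-1)$ and $L(\eta)>0$; and I would observe, exactly as in Theorem~\ref{3main}, that combining $L(\eta)\le U(\eta)$ with the hypothesis $U(\eta)-L(\eta)/(p-1)<p-1+sp/(p-1)$ forces $ps/(p-1)>-1$ strictly even at $U(\eta)=p-1$, so all four survive. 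In case (b) the analogous check is that, with $\varepsilon=0$ and $U(\eta)=p-1$, the intervals for the auxiliary exponents $x$ and $y$ in (\ref{07222}) and (\ref{07223}) remain nonempty; this comes down to $ps>-1$ and $L(\eta)>0$, both of which follow from $U(\eta)-L(\eta)<ps+1$ and $L(\eta)\le U(\eta)=p-1$. With these in hand, Proposition~\ref{R sigma tau} applies as before and the $p$-Carleson conclusion follows.

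I expect the only genuine obstacle to be this boundary bookkeeping: one must confirm that setting $\varepsilon=0$ does not collapse any of the strict inequalities guaranteeing condition (\ref{07231}) in Proposition~\ref{R sigma tau} for the weight $\omega(z)=\tilde\eta(z)(1-|z|^2)^{2-p}$ (for which $\omega_{[p-2]}=\tilde\eta$), in particular the requirements $\tau>\max\{U(\eta)-p-1,-1\}$ and $\min\{\sigma,\tau\}>p-2-L(\eta)$ on the parameters $\sigma,\tau$. Since all of these are open conditions already holding with strict slack in Theorem~\ref{3main}, the verification is routine, and the remainder of the argument is verbatim that of Theorem~\ref{3main}.
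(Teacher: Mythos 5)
Your proposal is correct and follows exactly the route the paper takes: the paper's own justification of Proposition \ref{0728-1} is precisely the remark that, under the extra hypothesis that $\hat{\eta}(t)/(1-t)^{U(\eta)}$ is essentially increasing (equivalently, $\tilde{\eta}(t)/(1-t)^{U(\eta)-1}$ is essentially increasing), one may take $\varepsilon=0$ in cases (a) and (b) of the proof of Theorem \ref{3main}, which relaxes $U(\eta)<p-1$ to $U(\eta)\leq p-1$. Your boundary bookkeeping confirming that the remaining strict inequalities (\ref{07217}), (\ref{07218}), (\ref{07222}) and (\ref{07223}) survive at $\varepsilon=0$ is the same verification the paper leaves implicit.
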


The following result is  Theorem 4.2 in \cite{BP}.

\begin{otherth}\label{BPtheorem}
Let $1<p<\infty$ and $\alpha\leq 1/2$, and let $s$ with $s>\max\{\frac{-1}{p}, \frac{1-p}{p}\}$ if $\alpha=1/2$ and $s>\max\{0, -2\alpha\}$ if $\alpha<1/2$. Let $g$ be analytic on $\D$. Then the operator $h_{s, g}$ is bounded from $B_p(\alpha)$ to $L^p(\D, (1-|z|^2)^{p-1-2\alpha}dA(z))$ if and only if the measure
$|g(z)|^p(1-|z|^2)^{p-1-2\alpha}dA(z)$ is a $p$-Carleson measure for $B_p(\alpha)$.
\end{otherth}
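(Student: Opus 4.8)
The plan is to obtain Theorem \ref{BPtheorem} as the special case of the equivalence (i)$\Leftrightarrow$(iii) in Theorem \ref{3main} (supplemented by Proposition \ref{0728-1} at the endpoint) corresponding to the standard weight $\eta(z)=(1-|z|^2)^{p-1-2\alpha}$. First I would record that, since $\alpha\leq 1/2<p/2$, one has $p-1-2\alpha>-1$, so $\eta$ is a radial weight, and that $\eta_{[2-p]}(z)=(1-|z|^2)^{2-p}\eta(z)=(1-|z|^2)^{1-2\alpha}$. Consequently $B_p(\eta_{[2-p]})=B_p(\alpha)$, $L^p(\D,\eta\,dA)=L^p(\D,(1-|z|^2)^{p-1-2\alpha}dA)$, and $|g(z)|^p\eta(z)\,dA(z)=|g(z)|^p(1-|z|^2)^{p-1-2\alpha}dA(z)$; thus, after renaming the symbol $f=g$, items (i) and (iii) of Theorem \ref{3main} become verbatim the two conditions of Theorem \ref{BPtheorem}. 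It therefore suffices to check that $\eta$ satisfies the structural hypotheses of Theorem \ref{3main} (or Proposition \ref{0728-1}) throughout the stated range of $s$.

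Second, I would compute the two indices. Exactly as in the proof of Theorem \ref{Bpa}, $\widehat{\eta}(r)\thickapprox(1-r)^{p-2\alpha}$, whence $U(\eta)=L(\eta)=p-2\alpha$. A direct Carleson-box estimate based on Lemma \ref{well-known estimate} shows that the B\'ekoll\'e--Bonami condition for $\eta\in\B_{p,s}$ reduces to $-1<p-1-2\alpha<(s+1)p-1$, that is, to $s>-2\alpha/p$; one then checks that every admissible $s$ in Theorem \ref{BPtheorem} satisfies this, so $\eta\in\B_{p,s}$.

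Third comes the parameter matching, split according to whether $\alpha<1/2$ or $\alpha=1/2$. When $\alpha<1/2$ we have $U(\eta)=p-2\alpha>p-1$, so cases (a) and (b) of Theorem \ref{3main} are unavailable and I would use case (c): the chain $p-1\leq U(\eta)<ps+p$, $s>0$, and $L(\eta)=p-2\alpha>p-1-ps$ follows at once from $\alpha\leq 1/2$ together with the hypothesis $s>\max\{0,-2\alpha\}$, so case (c) applies and yields the equivalence. When $\alpha=1/2$ we have $U(\eta)=L(\eta)=p-1$, and case (c) is no longer available because it forces $s>0$, whereas Theorem \ref{BPtheorem} permits $s$ as small as $\max\{-1/p,(1-p)/p\}<0$. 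Here I would instead invoke Proposition \ref{0728-1}: the ratio $\widehat{\eta}(t)/(1-t)^{U(\eta)}=\widehat{\eta}(t)/(1-t)^{p-1}$ is comparable to a constant, hence essentially increasing, so the extra hypothesis of Proposition \ref{0728-1} holds; its condition (a) (for $p\leq 2$) reduces to $s>(1-p)/p$ and its condition (b) (for $p\geq 2$) reduces to $s>-1/p$, matching exactly the endpoint thresholds of Theorem \ref{BPtheorem}.

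I expect the main obstacle to be precisely this endpoint $\alpha=1/2$: the natural route through case (c) of Theorem \ref{3main} breaks down because of its requirement $s>0$, and one must recognise that Proposition \ref{0728-1} is the correct tool and that its essentially-increasing hypothesis is met only because the relevant ratio degenerates to a constant at $\alpha=1/2$. Keeping the several $s$-thresholds aligned across the cases --- $s>-2\alpha/p$ for membership in $\B_{p,s}$, and $s>0$, $s>(1-p)/p$, or $s>-1/p$ for the boundedness criterion --- is the delicate bookkeeping; once this is done, the equivalence (i)$\Leftrightarrow$(iii) specializes to the assertion of Theorem \ref{BPtheorem}.
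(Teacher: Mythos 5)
Your derivation is correct and coincides with the paper's own route: the paper recovers this statement (quoted there from \cite{BP}) via Theorem \ref{4main}, whose proof likewise sets $\eta(z)=(1-|z|^2)^{p-1-2\alpha}$, computes $U(\eta)=L(\eta)=p-2\alpha$ and $\eta\in\B_{p,s}$ for $s>-2\alpha/p$, and then applies case (c) of Theorem \ref{3main} for $\alpha<1/2$ and Proposition \ref{0728-1} at the endpoint $\alpha=1/2$. Your bookkeeping of the $s$-thresholds matches the Remark following Theorem \ref{4main}, so nothing is missing.
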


Applying Theorem \ref{3main} and Proposition \ref{0728-1}, we obtain Theorem \ref{4main} below   which optimizes the range of the   parameter $\alpha$ in  Theorem \ref{BPtheorem}.

\begin{theor}\label{4main}
Suppose $g\in H(\D)$,  $1<p<\infty$, $\alpha<\frac{p}{2}$, and  $s>-\frac{2\alpha}{p}$. Furthermore, if $p$, $\alpha$ and $s$ satisfy further  one of the following conditions:
\begin{enumerate}[(a)]
  \item $\alpha\geq \frac{1}{2}$,  $p\leq 2$  and $s>\frac{(4-2p)\alpha-1}{p}$;
  \item  $\alpha\geq \frac{1}{2}$, $p>2$ and $s>-\frac{1}{p}$;
  \item  $\alpha \leq \frac{1}{2}$  and $s>0$.
\end{enumerate}
Then $h_{s, g}$ is bounded from $B_p(\alpha)$ to $L^p(\D, (1-|z|^2)^{p-1-2\alpha}dA(z))$  if and only if the measure
$|g(z)|^p(1-|z|^2)^{p-1-2\alpha}dA(z)$ is a $p$-Carleson measure for $B_p(\alpha)$.
\end{theor}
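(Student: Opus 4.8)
The plan is to obtain Theorem \ref{4main} as the standard-weight specialization of Theorem \ref{3main}, together with Proposition \ref{0728-1} for the boundary value of $\alpha$. First I would put $\eta(z)=(1-|z|^2)^{p-1-2\alpha}$, which is a radial weight exactly when $p-1-2\alpha>-1$, i.e. $\alpha<p/2$. With this choice $\eta_{[2-p]}(z)=(1-|z|^2)^{2-p}\eta(z)=(1-|z|^2)^{1-2\alpha}$, so that $(1-|z|^2)^{p-2}\eta_{[2-p]}(z)=(1-|z|^2)^{p-1-2\alpha}$ and hence $B_p(\eta_{[2-p]})=B_p(\alpha)$, while $L^p(\D,\eta\,dA)=L^p(\D,(1-|z|^2)^{p-1-2\alpha}dA)$. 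Under this identification the operator $h_{s,f}$ of Theorem \ref{3main} with $f=g$ is precisely $h_{s,g}$, and the $p$-Carleson measure $|f(z)|^p\eta(z)dA(z)$ is precisely $|g(z)|^p(1-|z|^2)^{p-1-2\alpha}dA(z)$; thus statements (i) and (iii) of Theorem \ref{3main} become the two statements of Theorem \ref{4main}.

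Next I would check the two weight hypotheses. The B\'ekoll\'e-Bonami membership $\eta\in\B_{p,s}$ for a standard weight is verified by estimating the two box integrals in the definition of $\B_{p,s}$ with Lemma \ref{well-known estimate}; their product is comparable to $(A_s(S(a)))^p$ precisely when both integrals converge, and these convergence conditions amount to the classical band $-1<p-1-2\alpha<(s+1)p-1$. The left inequality is again $\alpha<p/2$ and the right one is $s>-2\alpha/p$, both standing hypotheses of Theorem \ref{4main}. For the parameters I would use $\widehat{\eta}(r)\thickapprox(1-r)^{p-2\alpha}$, which by Lemma \ref{property of doub w} and the definition of $L(\eta)$ gives $U(\eta)=L(\eta)=p-2\alpha$, the same computation already recorded in the proof of Theorem \ref{Bpa}. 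Since in addition $\widehat{\eta}(t)/(1-t)^{U(\eta)}\thickapprox1$ is essentially increasing, the extra hypothesis of Proposition \ref{0728-1} holds automatically, which is what lets me reach $\alpha=1/2$.

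It then remains to match the three cases by substituting $U(\eta)=L(\eta)=p-2\alpha$ into the parameter conditions. In case (a) of Theorem \ref{3main}, $U(\eta)<p-1$ is equivalent to $\alpha>1/2$, and a short manipulation shows $U(\eta)-\frac{L(\eta)}{p-1}<p-1+\frac{sp}{p-1}$ is equivalent to $s>\frac{(4-2p)\alpha-1}{p}$; with $p\le2$ this is case (a) of Theorem \ref{4main} for $\alpha>1/2$. In case (b), $U(\eta)<p-1$ again gives $\alpha>1/2$ while $U(\eta)-L(\eta)=0<ps+1$ gives $s>-1/p$, which with $p>2$ is case (b) for $\alpha>1/2$. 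In case (c), $p-1\le U(\eta)$ reads $\alpha\le1/2$, the bound $U(\eta)<ps+p$ is the standing condition $s>-2\alpha/p$, and $L(\eta)>p-1-ps$ reads $s>\frac{2\alpha-1}{p}$, which is automatic once $\alpha\le1/2$ and $s>0$; this is case (c) of Theorem \ref{4main}. Finally, at the boundary $\alpha=1/2$ one has $U(\eta)=p-1$, so Theorem \ref{3main} does not apply directly; here I invoke Proposition \ref{0728-1}, whose cases (a) and (b) reduce, by the same substitution, to $s>\frac{1-p}{p}$ and $s>-1/p$, matching Theorem \ref{4main} at $\alpha=1/2$.

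I do not expect a serious analytic obstacle, since all the hard estimates are already contained in Theorem \ref{3main} and Proposition \ref{0728-1}; the work here is the bookkeeping of choosing the weight and translating parameter ranges. The one point that needs care is the boundary case $\alpha=1/2$, where the strict inequality $U(\eta)<p-1$ fails: there the passage from Theorem \ref{3main} to Proposition \ref{0728-1} is legitimate exactly because $\widehat{\eta}(t)/(1-t)^{U(\eta)}$ is essentially increasing for the standard weight, and this is the observation that should be checked explicitly.
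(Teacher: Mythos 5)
Your proposal is correct and follows essentially the same route as the paper: choose $\eta(z)=(1-|z|^2)^{p-1-2\alpha}$, verify $\eta\in\mathcal{B}_{p,s}$ via the box-integral computation (which yields exactly $s>-2\alpha/p$), note $U(\eta)=L(\eta)=p-2\alpha$, and then invoke Theorem \ref{3main} together with Proposition \ref{0728-1} for the boundary case $\alpha=1/2$. Your explicit translation of the three parameter cases and the observation that $\widehat{\eta}(t)/(1-t)^{U(\eta)}$ is essentially increasing (so Proposition \ref{0728-1} applies at $\alpha=1/2$) are exactly the points the paper relies on.
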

\begin{proof}
Set $\eta(z)=(1-|z|^2)^{p-1-2\alpha}$ with $p>2\alpha$. Then $B_p(\eta_{[2-p]})=B_p(\alpha)$.
Clearly, $U(\eta)=L(\eta)=p-2\alpha$. For $-1<s<\infty$ and $a\in \D\setminus \{0\}$, we deduce
\begin{align*}
&\left(\int_{S(a)}\eta(z)  dA(z) \right) \left(\int_{S(a)} \left(\frac{\eta(z)}{(1-|z|^2)^s}\right) ^{-\frac{p'}{p}} dA_s(z) \right)^{\frac{p}{p'}}\\
\thickapprox& (1-|a|)^{p+1-2\alpha} \left( (1-|a|) \int_{|a|}^1 (1-r)^{-1+s+\frac{2\alpha+s}{p-1}} dr  \right)^{\frac{p}{p'}}\\
\thickapprox& (1-|a|)^{(2+s)p}\thickapprox (A_s (S(a)))^p,
\end{align*}
where we use $s+\frac{2\alpha+s}{p-1}=\frac{2\alpha+ps}{p-1}>0$. Hence $\eta\in \B_{p,s}$ when $s>\frac{-2\alpha}{p}$. Note that $\frac{-2\alpha}{p}>-1$.  Then the desired result follows from Theorem \ref{3main}  and Proposition \ref{0728-1}.
\end{proof}

\noindent{\bf  Remark.  }
 Theorem \ref{4main} contains  the case of $1<p<\infty$ and
$1/2<\alpha <p/2$ missing in Theorem \ref{BPtheorem}. For  $1<p<\infty$ and $\alpha<1/2$, by (c) of Theorem \ref{4main},  we get the  same  result in  Theorem \ref{BPtheorem}. For $1<p<\infty$ and $\alpha=1/2$, from (b) and (c) in  Theorem \ref{4main}, we also get  the same result as Theorem \ref{BPtheorem}.

 Z. Lou and  R. Qian \cite{LQ} investigated  the boundedness of Hankel type operator $h_{s,f}$ related to a class of Dirichlet type spaces $\mathfrak{D}_\rho$. Let $\rho: [0, \infty)\rightarrow [0, \infty)$ be a right-continuous and nondecreasing function. $\mathfrak{D}_\rho$ is equal to $B_2(\o_\rho)$ with $\o_\rho(|z|)=\rho(1-|z|)$. $\rho$ is said to be upper (resp. lower) type $\gamma\in (0, \infty)$ (cf. \cite{J}) if
 $$
 \rho(xy)\leq C x^\gamma \rho(y), \ \ x\geq 1 \ \ (\text{resp.}\ \  x\leq 1) \  \ \text{and}\ \ 0<y<\infty.
 $$
 Clearly, if $\rho$ is  upper type $\gamma$ for some $\gamma>0$, then $\rho(2y)\lesssim \rho(y)$ for all $y>0$.

The following result is Theorem 1 in \cite{LQ}.
\begin{otherth}\label{LQtheorem}
Let $0<\gamma<1$,  $s>\frac{1+\gamma}{2}$ and  $f\in H(\D)$. Suppose $\rho: [0, \infty)\rightarrow [0, \infty)$ is  a right-continuous and nondecreasing function of  upper type $\gamma$.
Then $h_{s, f}: B_2(\o_\rho)\to  L^2(\D, \o_\rho dA)$ is bounded if and only if $|f(z)|^2\rho(1-|z|^2)dA(z)$ is a $2$-Carleson measure for $B_2(\o_\rho)$.
\end{otherth}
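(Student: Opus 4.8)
The plan is to deduce the theorem directly from Theorem \ref{3main} applied with $p=2$ and $\eta=\o_\rho$, where $\o_\rho(|z|)=\rho(1-|z|)$. First I would set up the dictionary between the two formulations. For $p=2$ one has $\eta_{[2-p]}=\eta$, so $B_p(\eta_{[2-p]})=B_2(\o_\rho)$ and $L^p(\D,\eta\,dA)=L^2(\D,\o_\rho\,dA)$. Moreover, since $1-|z|\le 1-|z|^2\le 2(1-|z|)$ and upper type $\gamma$ gives $\rho(2y)\lesssim\rho(y)$, we have $\rho(1-|z|^2)\thickapprox\rho(1-|z|)=\o_\rho(z)$, so $|f(z)|^2\rho(1-|z|^2)\,dA(z)$ is a $2$-Carleson measure for $B_2(\o_\rho)$ precisely when $|f|^p\eta\,dA$ is. Hence the asserted equivalence is exactly (i)$\Leftrightarrow$(iii) of Theorem \ref{3main}, and everything reduces to verifying that $\o_\rho$ satisfies the hypotheses there.

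Next I would pin down the structure of $\o_\rho$. Upper type $\gamma$ means $\rho(u)/u^\gamma$ is essentially decreasing; together with $\rho$ nondecreasing this sandwiches $\int_0^t\rho(u)\,du$ between $\frac{t\rho(t)}{\gamma+1}$ and $t\rho(t)$, whence $\widehat{\o_\rho}(r)\thickapprox(1-r)\o_\rho(r)$, i.e. $\o_\rho\in\mathcal{R}\subseteq\d$, so both $U(\o_\rho)$ and $L(\o_\rho)$ are defined. Writing $\widehat{\o_\rho}(r)/(1-r)^\beta\thickapprox\rho(t)t^{1-\beta}$ with $t=1-r$, and comparing the essential monotonicity of $\rho(t)t^{1-\beta}$ (using the upper-type inequality for the upper range of $\beta$ and the monotonicity of $\rho$ for the lower range), I would show
$$
1\le L(\o_\rho)\le U(\o_\rho)\le 1+\gamma .
$$

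Then I would verify $\o_\rho\in\mathcal{B}_{2,s}$. Since $p=p'=2$, the B\'ekoll\'e--Bonami product for a Carleson box $S(a)$ reads
\[
\left(\int_{S(a)}\o_\rho\,dA\right)\left(\int_{S(a)}\frac{(1-|z|^2)^{2s}}{\o_\rho}\,dA\right).
\]
By $\widehat{\o_\rho}(r)\thickapprox(1-r)\o_\rho(r)$ the first factor is $\thickapprox(1-|a|)^2\rho(1-|a|)$; the lower bound $\rho(t)\gtrsim(t/T)^\gamma\rho(T)$ for $t\le T:=1-|a|$ (again from upper type) gives $\int_0^T t^{2s}/\rho(t)\,dt\lesssim T^{2s+1}/\rho(T)$ as soon as $2s-\gamma+1>0$, which holds since $s>\frac{1+\gamma}{2}$. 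Thus the second factor is $\lesssim(1-|a|)^{2s+2}/\rho(1-|a|)$, so the product is $\lesssim(1-|a|)^{4+2s}\thickapprox(A_s(S(a)))^2$, confirming $\o_\rho\in\mathcal{B}_{2,s}$.

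Finally I would check that case (c) of Theorem \ref{3main} applies with $p=2$: the bounds above yield $p-1=1\le U(\o_\rho)\le 1+\gamma<2s+2=ps+p$, while $s>\frac{1+\gamma}{2}>0$ and $L(\o_\rho)\ge 1>1-2s=p-1-ps$. Invoking the equivalence (i)$\Leftrightarrow$(iii) of Theorem \ref{3main} then gives the statement. The main obstacle lies in the second and third steps: upper type is only a one-sided hypothesis, so the delicate points are extracting the sharp range $1\le L(\o_\rho)\le U(\o_\rho)\le 1+\gamma$ and controlling the reciprocal integral $\int_0^T t^{2s}/\rho(t)\,dt$ in the B\'ekoll\'e--Bonami test. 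It is worth noting that this control only requires $2s-\gamma+1>0$, that is $s>\frac{\gamma-1}{2}$, so this approach in fact recovers and improves the result under a hypothesis weaker than $s>\frac{1+\gamma}{2}$.
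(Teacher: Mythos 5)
Your proposal is correct and follows essentially the same route as the paper: the paper does not reprove Theorem \ref{LQtheorem} directly but establishes the stronger Theorem \ref{5main} by exactly your argument --- showing $\widehat{\o_\rho}(r)\thickapprox(1-r)\rho(1-r)$, deducing $1\le L(\o_\rho)\le U(\o_\rho)\le\gamma+1$, verifying $\o_\rho\in\B_{2,s}$ via the same reciprocal-integral estimate, and invoking case (c) of Theorem \ref{3main}. Your closing observation that only $s>\max\{0,\tfrac{\gamma-1}{2}\}$ is needed (the $s>0$ coming from case (c)) is precisely the improvement the paper records as Theorem \ref{5main}.
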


In \cite[p. 219, Remark 2]{LQ}, the authors  mentioned that they failed to prove Theorem \ref{LQtheorem} and other results in their paper without the condition $s>\frac{1+\gamma}{2}$.
Motivated by this remark, we  apply Theorem \ref{3main} to give  the following result which means that the condition  in Theorem \ref{LQtheorem}   can be improved.

\begin{theor}\label{5main}
Let $0<\gamma<\infty$,  $s>\max\{0, \frac{\gamma-1}{2}\}$ and  $f\in H(\D)$. Suppose $\rho: [0, \infty)\rightarrow [0, \infty)$ is  a right-continuous and nondecreasing function of upper type $\gamma$.
Then $h_{s, f}: B_2(\o_\rho)\to L^2(\D, \o_\rho dA) $ is bounded if and only if $|f(z)|^2\rho(1-|z|^2)dA(z)$ is a $2$-Carleson measure for $B_2(\o_\rho)$.
\end{theor}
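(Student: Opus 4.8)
The plan is to realize Theorem~\ref{5main} as the instance $p=2$, $\eta=\o_\rho$ of Theorem~\ref{3main}. With this choice $\eta_{[2-p]}=\eta=\o_\rho$, so that $B_p(\eta_{[2-p]})=B_2(\o_\rho)$ and $L^p(\D,\eta\,dA)=L^2(\D,\o_\rho\,dA)$, and the measure $d\mu=|f|^p\eta\,dA=|f(z)|^2\o_\rho(z)\,dA(z)$ in condition (iii) of Theorem~\ref{3main} is exactly the measure in Theorem~\ref{5main} up to the harmless replacement of $\rho(1-|z|)$ by $\rho(1-|z|^2)$. Thus, once I check that $\o_\rho$ is a radial weight in $\B_{2,s}$ satisfying condition (c) of Theorem~\ref{3main}, the equivalence (i)$\Leftrightarrow$(iii) there yields the theorem.

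First I would record the weight-theoretic consequences of $\rho$ being nondecreasing of upper type $\gamma$. Upper type $\gamma$ is equivalent to $u\mapsto\rho(u)/u^\gamma$ being essentially decreasing, and in particular $\rho$ is doubling, so that $\rho(1-|z|^2)\thickapprox\rho(1-|z|)=\o_\rho(z)$ since $1-|z|\le1-|z|^2\le2(1-|z|)$; this justifies the replacement of measures above. Next, $\widehat{\o_\rho}(r)=\int_0^{1-r}\rho(u)\,du\thickapprox(1-r)\rho(1-r)=(1-r)\o_\rho(r)$, so $\o_\rho\in\mathcal{R}$ (hence $\o_\rho\in\d$ and $U(\o_\rho),L(\o_\rho)$ are well defined). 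From $\widehat{\o_\rho}(r)\thickapprox(1-r)\rho(1-r)$ the parameters $U(\o_\rho)$ and $L(\o_\rho)$ reduce to monotonicity of $h\mapsto h^{1-\alpha}\rho(h)$: since $\rho$ is nondecreasing, $\alpha=1$ is admissible for $L$, giving $L(\o_\rho)\ge1$, while the essential decrease of $\rho(h)/h^\gamma$ makes $h^{1-\beta}\rho(h)$ essentially decreasing for every $\beta>1+\gamma$, giving $U(\o_\rho)\le1+\gamma$.

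Then I would verify $\o_\rho\in\B_{2,s}$. Since $p'=2$ when $p=2$, the two Carleson-box integrals are $\int_{S(a)}\o_\rho\,dA\thickapprox(1-|a|)^2\rho(1-|a|)$ and $\int_{S(a)}(1-|z|^2)^{2s}\rho(1-|z|)^{-1}\,dA\thickapprox(1-|a|)\int_0^{1-|a|}u^{2s}\rho(u)^{-1}\,du$. Using $\rho(u)\gtrsim u^\gamma\rho(h)/h^\gamma$ for $u\le h:=1-|a|$, the inner integral is comparable to $h^{2s+1}/\rho(h)$ provided $2s-\gamma>-1$, i.e. exactly when $s>\tfrac{\gamma-1}{2}$; this is where that hypothesis first enters. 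Multiplying the two estimates gives the B\'ekoll\'e-Bonami product $\thickapprox(1-|a|)^{2s+4}\thickapprox(A_s(S(a)))^2$, so $\o_\rho\in\B_{2,s}$.

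Finally I would check condition (c) of Theorem~\ref{3main} for $p=2$, $\eta=\o_\rho$: the inequality $p-1=1\le U(\o_\rho)$ follows from $U(\o_\rho)\ge L(\o_\rho)\ge1$; the inequality $U(\o_\rho)<ps+p=2s+2$ follows from $U(\o_\rho)\le1+\gamma$ together with $1+\gamma<2s+2$, again equivalent to $s>\tfrac{\gamma-1}{2}$; the requirement $s>0$ is assumed; and $L(\o_\rho)>p-1-ps=1-2s$ follows from $L(\o_\rho)\ge1>1-2s$. Hence Theorem~\ref{3main} applies and gives the stated equivalence. I expect the main obstacle to be the weight analysis of the two middle steps, namely extracting sharp enough bounds on $U(\o_\rho)$, $L(\o_\rho)$ and on the dual Carleson-box integral from the single upper-type hypothesis, and recognizing that the convergence threshold $2s+1>\gamma$ of that integral coincides with $s>\tfrac{\gamma-1}{2}$, the inequality that simultaneously secures $\B_{2,s}$-membership and the range $U(\o_\rho)<2s+2$ demanded by Theorem~\ref{3main}.
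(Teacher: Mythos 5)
Your proposal is correct and follows essentially the same route as the paper: reduce to case (c) of Theorem \ref{3main} with $p=2$, $\eta=\o_\rho$, establish $\widehat{\o_\rho}(r)\thickapprox(1-r)\rho(1-r)$ and hence $1\leq L(\o_\rho)\leq U(\o_\rho)\leq\gamma+1$ from monotonicity and the upper-type condition, and verify $\o_\rho\in\B_{2,s}$ via the same Carleson-box computation whose convergence threshold $2s-\gamma>-1$ is exactly $s>\frac{\gamma-1}{2}$. The only addition beyond the paper's argument is your explicit (and correct) remark that $\rho(1-|z|^2)\thickapprox\rho(1-|z|)$ by the doubling of $\rho$, which the paper leaves implicit.
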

\begin{proof}
Since $\rho$ is a nondecreasing function, we obtain
$$
\widehat{\o_\rho}(r)=\int_r^1 \rho(1-s)ds\leq (1-r)\rho(1-r),
$$
and
$$
\widehat{\o_\rho}(r)\geq \int_r^{\frac{1+r}{2}} \rho(1-s)ds
\geq \frac{1-r}{2}\rho\left(\frac{1-r}{2}\right)
\gtrsim  (1-r)\rho(1-r)
$$
for all $r\in [0, 1)$.  Hence $\widehat{\o_\rho}(r)\thickapprox (1-r)\rho(1-r)$ for all  $r\in [0, 1)$.  Since $\rho$ is upper type $\gamma$, $\rho(1-x)\lesssim (\frac{1-x}{1-y})^\gamma \rho(1-y)$ for all $0<x\leq y<1$.
Consequently,  $\frac{\widehat{\o_\rho}(r)}{(1-r)^{\gamma+1}}$ is  essentially increasing on $[0, 1)$ and $\frac{\widehat{\o_\rho}(r)}{1-r}$ is  essentially decreasing on $[0, 1)$. Thus $1\leq L(\o_\rho)\leq U(\o_\rho)\leq \gamma+1$.

Since $s>\frac{\gamma-1}{2}$ and $\rho(1-x)/(1-x)^\gamma$ is essentially increasing on $[0, 1)$,   we deduce
\begin{align*}
&\int_{S(a)}\rho(1-|z|)  dA(z)  \int_{S(a)} \left(\frac{\rho(1-|z|)}{(1-|z|^2)^s}\right) ^{-1} dA_s(z) \\
\thickapprox& (1-|a|)^2\widehat{\o_\rho}(|a|) \int_{|a|}^1 \frac{(1-r)^{2s+\gamma}}{\rho(1-r)(1-r)^\gamma}dr\\
\lesssim& (1-|a|)^{3+\gamma} \int_{|a|}^1 (1-r)^{2s-\gamma}dr
\thickapprox (1-|a|)^{4+2s}\thickapprox (A_s(S(a)))^2
\end{align*}
for all $a\in \D\setminus \{0\}$. This gives $\o_\rho \in \B_{2, s}$ when $s>\frac{\gamma-1}{2}$.

Set $p=2$ and  $\eta=\o_\rho$ in (c) of  Theorem \ref{3main}. Since $s>\max\{0, \frac{\gamma-1}{2}\}$ and
$1\leq L(\eta)\leq U(\eta)\leq \gamma+1$, we see that (c) in   Theorem \ref{3main} holds. Then we get the desired result.
\end{proof}

\subsection{The compactness of Hankel type operators  from  $B_p(\o)$  to $S_p(\o)$}  This subsection is devoted to consider the compactness of  Hankel type operators    corresponding to Theorem \ref{3main}. This result is also  new for   $\o(z)=(1-|z|^2)^{1-2\alpha}$.

A linear operator from one normed linear space  to another normed linear space is compact if the image of the unit ball under the operator has compact closure.

We begin with  the following lemma.
\begin{limma} \label{0729-1}
Let  $1\leq p<\infty$,  $-1<s<\infty$ and  $f \in H(\D)$.  Suppose $\eta$ is a weight and  there
exist two constants $r\in(0,1)$ and $C>0$ such that
$$
C^{-1}\eta (\zeta)\le\eta (z)\le C\eta (\zeta)
$$
for all $z$ and $\zeta$ satisfying $\rho(z,\zeta)<r$.   Then the following statements hold:
\begin{enumerate}[(i)]
  \item  $h_{s,f}: B_p(\eta_{[2-p]}) \to L^p(\D, \eta dA)$ is a  compact  operator if and only if\\ $\|h_{s,f} g_n\|_{L^p(\D, \eta dA)}\rightarrow 0$ as $n\to \infty$ whenever $\{g_n\}$ is a bounded sequence in $B_p(\eta_{[2-p]})$ that
     converges to 0 uniformly on every compact subset of $\D$;
  \item $I_d: B_p(\eta_{[2-p]}) \to  L^p(\D, |f(z)|^p\eta(z) dA(z)) $ is a  compact  operator  if and only if   $\|g_n\|_{L^p(\D, |f(z)|^p\eta(z) dA(z))}\rightarrow 0$ as $n\to \infty$  whenever $\{g_n\}$ is a bounded sequence in $B_p(\eta_{[2-p]})$ that converges to 0 uniformly on every compact subset of $\D$.
  \end{enumerate}
\end{limma}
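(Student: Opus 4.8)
The plan is to prove both parts by the standard ``normal families plus pointwise vanishing'' scheme, in which the weight regularity hypothesis serves only to guarantee that $B_p(\eta_{[2-p]})$ embeds continuously into $H(\D)$ with the natural local growth control. First I would record the two preliminary facts that drive everything. Using the subharmonicity of $|g'|^p$ over a pseudo-hyperbolic disk $\Delta(z,r)$ together with the hypothesis $\eta(w)\thickapprox\eta(z)$ and $1-|w|^2\thickapprox 1-|z|^2$ for $w\in\Delta(z,r)$, one obtains $|g'(z)|^p(1-|z|^2)^p\eta(z)\lesssim\|g\|_{B_p(\eta_{[2-p]})}^p$, and after integrating a pointwise growth estimate $|g(z)|\lesssim \Phi(z)\,\|g\|_{B_p(\eta_{[2-p]})}$ for all $g\in B_p(\eta_{[2-p]})$, where $\Phi$ is a fixed function bounded on every compact subset of $\D$. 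Consequently point evaluations are bounded, so any sequence bounded in $B_p(\eta_{[2-p]})$ is locally uniformly bounded, hence normal by Montel's theorem; moreover if $\{g_n\}$ is bounded and $g_{n_k}\to g$ uniformly on compact subsets, then $g_{n_k}'\to g'$ locally uniformly and Fatou's lemma gives $g\in B_p(\eta_{[2-p]})$ with $\|g\|_{B_p(\eta_{[2-p]})}\le\liminf_k\|g_{n_k}\|_{B_p(\eta_{[2-p]})}$.

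For the forward implication of (i), suppose $h_{s,f}$ is compact and let $\{g_n\}$ be bounded in $B_p(\eta_{[2-p]})$ with $g_n\to 0$ uniformly on compact subsets. I first claim that $h_{s,f}g_n(z)\to 0$ for each fixed $z\in\D$. Writing $h_{s,f}g_n(z)=(s+1)\overline{\ind f(w)\overline{g_n(w)}(1-\overline{w}z)^{-s-2}dA_s(w)}$, I split the integral over $\{|w|\le R\}$ and $\{R<|w|<1\}$: on the first region uniform convergence of $g_n$ forces a vanishing contribution, while on the tail the growth estimate for $|g_n|$ against the decay of the kernel makes the contribution uniformly small in $n$ once $R$ is close to $1$. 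Granting the claim, if $\|h_{s,f}g_n\|_{L^p(\D,\eta dA)}\not\to 0$ I pass to a subsequence along which these norms stay bounded below; compactness then yields a further subsequence of $\{h_{s,f}g_n\}$ converging in $L^p(\D,\eta dA)$, whose limit must be $0$ by the pointwise vanishing just proved, a contradiction.

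For the converse implication of (i), take any sequence $\{g_n\}$ in the unit ball of $B_p(\eta_{[2-p]})$ and extract, via the normal family property above, a subsequence $\{g_{n_k}\}$ converging uniformly on compact subsets to some $g$ lying in $B_p(\eta_{[2-p]})$. Then $g_{n_k}-g$ is bounded in $B_p(\eta_{[2-p]})$ and tends to $0$ uniformly on compact subsets, so the hypothesis gives $\|h_{s,f}(g_{n_k}-g)\|_{L^p(\D,\eta dA)}\to 0$; since $p\ge 1$ and $h_{s,f}$ is linear, this is exactly $h_{s,f}g_{n_k}\to h_{s,f}g$ in $L^p(\D,\eta dA)$, so the image of the unit ball is relatively compact.

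Finally, part (ii) follows the same template but is simpler, because $I_dg_n(z)=g_n(z)\to 0$ pointwise directly from uniform convergence on compact subsets, so the integral tail estimate is not needed and the growth bound enters only through the normal family step. The main obstacle I anticipate is precisely the tail control in the pointwise vanishing claim for $h_{s,f}$: one must balance the polynomial growth bound for $|g_n|$ coming from the preliminary estimate against the decay of the Bergman kernel $(1-\overline{w}z)^{-s-2}$, uniformly in $n$, in order to upgrade local uniform convergence of $\{g_n\}$ to genuine pointwise convergence of $\{h_{s,f}g_n\}$; once this is in place the remaining steps are routine.
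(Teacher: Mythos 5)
Your proposal follows essentially the same route as the paper: the authors likewise use subharmonicity of $|g'|^p$ together with the local comparability of $\eta$ to obtain the pointwise bound $|g'(z)|^p\eta(z)(1-|z|)^2\lesssim \|g\|^p_{B_p(\eta_{[2-p]})}$, conclude that point evaluations are bounded on $B_p(\eta_{[2-p]})$, on $A^p_\eta$ and on $A^p_{|f|^p\eta}$, and then invoke the standard normal-families argument (citing Cowen--MacCluer, Proposition 3.11) in exactly the form you spell out. The one step you flag as delicate --- the uniform tail control yielding $h_{s,f}g_n(z)\to 0$ pointwise --- is also left implicit in the paper, so your outline neither departs from nor falls short of what the authors actually write.
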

\begin{proof}
 For any $g\in H(\D)$, the assumes on $\eta$ and the subharmonicity of $|g|^p$ yield
\begin{align}\label{32-1}
  \ind |g'(\zeta)|^p  \eta(\zeta) dA(\zeta) &\geq  \int_{\Delta(z, r)} |g'(\zeta)|^p  \eta(\zeta) dA(\zeta) \nonumber \\
  &\geq C^{-1} \eta(z) \int_{\Delta(z, r)} |g'(\zeta)|^p   dA(\zeta) \nonumber \\
  &\geq C_1 |g'(z)|^p  \eta(z) (1-|z|)^2
\end{align}
for all $z\in \D$, where $C_1$ is a positive constant independent of $z$ and $g$. Clearly, $g(z)=\int_0^z g'(\zeta)d\zeta+g(0)$.  Hence  the point evaluations on $B_p(\eta_{[2-p]})$ are bounded  functionals.  Similarly,
the point evaluations on $A^p_\eta$ are also bounded.  For $z\in \D$ satisfying  $f(z)\not =0$,  the point evaluation induced by $z$ on   $A^p_{|f|^p\eta}$ is also bounded. Next, the proof of this lemma   is quite standard (cf. \cite[Proposition 3.11]{CM}). We omit it.
\end{proof}

We also need the following lemma.

\begin{limma} \label{0729-2}
Let  $1<p<\infty$.   Suppose $\eta \in \check{\d}$  and  there
exist two constants $r\in(0,1)$ and $C>0$ such that
$$
C^{-1}\eta (\zeta)\le\eta (z)\le C\eta (\zeta)
$$
for all $z$ and $\zeta$ satisfying $\rho(z,\zeta)<r$.    Then $I_d: B_p(\eta_{[2-p]}) \to  L^p(\D, \eta(z) dA(z)) $ is a  compact  operator.
\end{limma}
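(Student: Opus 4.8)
\noindent\emph{Proof idea.}
The plan is to reduce the statement to the sequential compactness criterion of Lemma \ref{0729-1} and then to exploit the \emph{full} reverse Littlewood--Paley estimate that is available precisely because $\eta\in\check{\d}$. The key structural observation is that this estimate carries a factor $(1-|z|^2)^p$, and that factor is exactly what upgrades a uniform bound on derivatives into a vanishing integral.

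First I would record the preliminary estimate
\[
\ind |f(z)-f(0)|^p\eta(z)\,dA(z)\lesssim \ind (1-|z|^2)^p|f'(z)|^p\eta(z)\,dA(z),\qquad f\in H(\D).
\]
This is the conclusion of Theorem \ref{2Bao etac} applied to $\omega=\eta$, and its Forelli--Rudin type hypothesis can be verified here because $\eta$ is radial, lies in $\check{\d}$, and is comparable on pseudohyperbolic disks: the pointwise comparability gives $\eta(r)\lesssim \hat{\eta}(r)/(1-r)$, while $\eta\in\check{\d}$ yields the reverse bound $\hat{\eta}(r)\lesssim (1-r)\eta(r)$, so that $\hat{\eta}(r)\thickapprox(1-r)\eta(r)$ and the required integral inequality follows by a direct computation with Lemma \ref{well-known estimate}. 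Since $\|f\|_{B_p(\eta_{[2-p]})}^p\thickapprox |f(0)|^p+\ind|f'(z)|^p\eta(z)\,dA(z)$ and $(1-|z|^2)^p\le 1$, this estimate already shows that $I_d:B_p(\eta_{[2-p]})\to L^p(\D,\eta\,dA)$ is bounded.

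Next, by Lemma \ref{0729-1}(ii) with the constant symbol $f\equiv 1$ (so that $|f|^p\eta=\eta$ and the hypotheses on $\eta$ are exactly those assumed here), compactness of $I_d$ is equivalent to proving that $\ind|g_n|^p\eta\,dA\to 0$ whenever $\{g_n\}$ is bounded in $B_p(\eta_{[2-p]})$ and $g_n\to 0$ uniformly on compact subsets of $\D$. Fix such a sequence; then $\sup_n\ind|g_n'|^p\eta\,dA\le M<\infty$, $g_n(0)\to 0$, and $g_n'\to 0$ uniformly on compacta by the Cauchy estimates. The heart of the argument is to show
\[
\ind (1-|z|^2)^p|g_n'(z)|^p\eta(z)\,dA(z)\longrightarrow 0 .
\]
Given $\varepsilon>0$, I would choose $\rho_0$ so close to $1$ that $(1-\rho_0^2)^p M<\varepsilon/2$; then for every $n$ the integral over $\{|z|>\rho_0\}$ is at most $(1-\rho_0^2)^p\ind|g_n'|^p\eta\,dA\le(1-\rho_0^2)^pM<\varepsilon/2$, while over the compact core $\{|z|\le\rho_0\}$ it is bounded by $(\sup_{|z|\le\rho_0}|g_n'|)^p\,\|\eta\|_{L^1(\D)}$, which tends to $0$ as $n\to\infty$. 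Feeding this into the preliminary estimate gives $\ind|g_n-g_n(0)|^p\eta\,dA\to 0$, and since $g_n(0)\to 0$ and $\ind\eta\,dA<\infty$, it follows that $\ind|g_n|^p\eta\,dA\to 0$, which is exactly the criterion.

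The only genuinely nontrivial ingredient is the preliminary reverse Littlewood--Paley estimate, that is, the verification of the hypothesis of Theorem \ref{2Bao etac} for $\eta$; this is where the combination ``$\eta\in\check{\d}$ together with comparability on pseudohyperbolic disks'' is essential, since it forces $\hat\eta(r)\thickapprox(1-r)\eta(r)$. Once that is in hand, the decisive point is purely structural: it is the factor $(1-|z|^2)^p$ produced by this estimate --- and \emph{not} mere boundedness of $I_d$ into $A^p_\eta$ --- that converts the uniform control of the derivatives plus local uniform convergence into a vanishing integral, the rest being the standard splitting into a boundary annulus and a compact core.
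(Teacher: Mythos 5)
Your proposal is correct and follows the same skeleton as the paper's proof: reduce to the sequential criterion of Lemma \ref{0729-1}(ii), show that $\int_\D (1-|z|^2)^p|g_n'|^p\eta\,dA\to 0$, and then invoke a reverse Littlewood--Paley inequality valid for $\check{\d}$ weights. There are two local differences worth recording. First, to make the derivative integral vanish, the paper derives the pointwise bound $|g_n'(z)|^p\eta(z)\lesssim (1-|z|^2)^{-2}$ from subharmonicity and applies dominated convergence (which is where $p>1$ enters, to make $(1-|z|^2)^{p-2}$ integrable); you instead split $\D$ into the annulus $\{|z|>\rho_0\}$, controlled by $(1-\rho_0^2)^pM$, and a compact core, controlled by uniform convergence of $g_n'$. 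Your splitting is slightly more elementary and would even survive for $0<p\le 1$. Second, for the reverse Littlewood--Paley inequality the paper simply cites \cite[p.~8]{PR2021}, whereas you propose to verify the hypothesis of Theorem \ref{2Bao etac} directly. Your key observation $\hat\eta(r)\thickapprox(1-r)\eta(r)$ is right (the comparability hypothesis gives $\hat\eta(r)\gtrsim(1-r)\eta(r)$ and, combined with $\eta\in\check{\d}$ and a finite chaining of pseudohyperbolic disks, the reverse bound; note it also forces $\eta\in\hat{\d}$, hence $\eta$ is in effect a regular weight). However, calling the remaining verification ``a direct computation with Lemma \ref{well-known estimate}'' undersells it: after integrating out the angular variable one must split the radial integral at $|z|$ and use both \eqref{1023-2} from Lemma \ref{property of doub w} (for the inner piece) and the essential monotonicity \eqref{proper L(w)} coming from $\check{\d}$ (for the outer piece, which also dictates the admissible $s_0$). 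This is standard but should be carried out; it is the only under-detailed step in an otherwise sound argument.
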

\begin{proof}
Let $\{g_n\}$ be a bounded sequence in $B_p(\eta_{[2-p]})$ that converges to 0 uniformly on every compact subset of $\D$. Similar to  (\ref{32-1}), we get
\begin{align*}
\infty >  \sup_n \ind |g'_n(\zeta)|^p \eta(\zeta) dA(\zeta) \gtrsim \sup_{n} \sup_{z\in \D} |g_n'(z)|^p  \eta(z) (1-|z|^2)^2;
\end{align*}
that is,
\begin{align*}
 |g_n'(z)|^p  \eta(z) \lesssim G(z)
\end{align*}
for all $n$ and all $z\in \D$, where $G(z)=(1-|z|^2)^{-2}$.  For $p>1$, it is clear that the integral  $\ind G(\zeta) (1-|\zeta|^2)^p dA(\zeta)$ is convergent. Then Lebesgue's Dominated Convergence Theorem gives
\begin{align*}
\lim_{n \to \infty}\ind |g'_n(\zeta)|^p \eta(\zeta) (1-|\zeta|^2)^p  dA(\zeta)=0.
\end{align*}
Note that  $\eta \in \check{\d}$. By \cite[p. 8]{PR2021},
\begin{align*}
\ind |g_n(\zeta)|^p \eta(\zeta)  dA(\zeta) \lesssim |g_n(0)|^p + \ind |g'_n(\zeta)|^p \eta(\zeta) (1-|\zeta|^2)^p  dA(\zeta)
\end{align*}
for all $n$. Consequently,
\begin{align*}
\lim_{n \to \infty} \ind |g_n(\zeta)|^p \eta(\zeta)  dA(\zeta)=0.
\end{align*}
By Lemma \ref{0729-1}, the proof is complete.
\end{proof}

Now we give the main result in this subsection.

\begin{theor}\label{0729-3}
Suppose $1<p<\infty$, $-1<s<\infty$, $f\in H(\D)$ and $\eta\in \B_{p, s}$ is a radial weight. Let $\eta,p,s$ satisfy  further one of the following conditions:
\begin{enumerate}[(a)]
  \item  $U(\eta)<p-1$,  $p\leq 2$,  and
         $U(\eta)-\frac{L(\eta)}{p-1}<p-1+\frac{sp}{p-1}$;
  \item  $U(\eta)<p-1$, $p\geq 2 $,  and $U(\eta)-L(\eta)<ps+1$;
  \item  $p-1\leq U(\eta)<ps+p$, \ $s>0$, and $L(\eta)>p-1-ps$.
\end{enumerate}
Then the following conditions are equivalent:
\begin{enumerate}[(i)]
\item  $h_{s,f}: B_p(\eta_{[2-p]}) \to L^p(\D, \eta dA)$ is a  compact  operator;
\item $h_F^{s}: B_p(\eta_{[2-p]}) \to S_p(\eta_{[2-p]}) $  is a  compact  operator, where $F$ is in $H(\D)$ satisfying  $F'=f$ on $\D$;
\item $d\mu(z)=|f(z)|^p\eta(z) dA(z)$ is a vanishing  $p$-Carleson measure for $B_p(\eta_{[2-p]})$.
\end{enumerate}
\end{theor}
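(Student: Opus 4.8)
The plan is to transcribe the proof of Theorem~\ref{3main}, replacing each boundedness statement by its sequential compactness counterpart from Lemma~\ref{0729-1}, and to reduce throughout to the regular weight $\tilde{\eta}(r)=\hat{\eta}(r)/(1-r)\in\mathcal{R}$, which satisfies the pseudo-hyperbolic slow-variation hypotheses of Lemmas~\ref{0729-1} and~\ref{0729-2}. Because $\overline{h_{s,f}g}=P_s(f\overline g)$ is analytic, Proposition~\ref{w yiwan} gives $\|h_{s,f}g\|_{L^p(\D,\eta dA)}\thickapprox\|h_{s,f}g\|_{L^p(\D,\tilde{\eta}dA)}$; since $gf$ is analytic as well, the same proposition yields $\|g\|_{L^p(\D,\mu)}\thickapprox\|g\|_{L^p(\D,\tilde{\mu})}$ with $\tilde{\mu}=|f(z)|^p\tilde{\eta}(z)dA(z)$, while $B_p(\eta_{[2-p]})=B_p(\tilde{\eta}_{[2-p]})$ with comparable norms. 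Hence it suffices to prove the equivalence with $\eta$ replaced by $\tilde{\eta}$ everywhere. The equivalence $(i)\Leftrightarrow(ii)$ follows at once from the pointwise identity $|\nabla h_F^s g|\thickapprox|h_{s,f}g|$ established in Theorem~\ref{3main} together with Lemma~\ref{0729-1}. For $(iii)\Rightarrow(i)$ I would use that a vanishing $p$-Carleson measure makes $I_d$ compact (Lemma~\ref{0729-1}(ii)) and that $\|h_{s,f}g\|_{L^p(\D,\tilde{\eta}dA)}=\|P_s(f\overline g)\|_{A^p_{\tilde{\eta}}}\lesssim\|fg\|_{L^p(\D,\tilde{\eta}dA)}=\|g\|_{L^p(\D,\tilde{\mu})}$, the Bergman projection being bounded since $\eta\in\B_{p,s}$; thus every bounded sequence tending to $0$ on compacta is sent by $h_{s,f}$ to a norm-null sequence, and Lemma~\ref{0729-1}(i) gives compactness.

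The substance is $(i)\Rightarrow(iii)$. First I would record the compactness refinement of Lemma~\ref{1021-1}: if $h_{s,f}$ is compact then $\lim_{|a|\to1}(1-|a|^2)|f(a)|=0$. This is obtained by running the estimate of Lemma~\ref{1021-1} with the normalized test functions $g_a/\|g_a\|_{B_p(\tilde{\eta}_{[2-p]})}$, which tend to $0$ uniformly on compacta as $|a|\to1$; compactness forces $\|h_{s,f}g_a\|/\|g_a\|\to0$, and the same chain of inequalities then yields decay of $f$ in place of mere boundedness. Now fix a bounded sequence $\{g_n\}$ in $B_p(\tilde{\eta}_{[2-p]})$ with $g_n\to0$ uniformly on compacta; by Lemma~\ref{0729-1}(ii) it suffices to prove $\|g_n\|_{L^p(\D,\tilde{\mu})}\to0$. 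Writing $Tg=f\overline g-\overline{h_{s,f}g}$, the identity (\ref{1021-2}) gives $|f||g_n|\le|h_{s,f}g_n|+|Tg_n|$, whence
$$
\|g_n\|_{L^p(\D,\tilde{\mu})}=\|fg_n\|_{L^p(\D,\tilde{\eta}dA)}\le\|h_{s,f}g_n\|_{L^p(\D,\tilde{\eta}dA)}+\|Tg_n\|_{L^p(\D,\tilde{\eta}dA)}.
$$
The first term tends to $0$ by compactness of $h_{s,f}$ and Lemma~\ref{0729-1}(i), so the whole matter reduces to showing $\|Tg_n\|_{L^p(\D,\tilde{\eta}dA)}\to0$.

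This last step is the main obstacle, and it is exactly where the boundedness argument fails to transfer: the estimates of Theorem~\ref{3main} only give $\|Tg_n\|^p\lesssim\|g_n\|_{B_p(\tilde{\eta}_{[2-p]})}^p$, which stays bounded rather than decaying. To extract genuine decay I would split the inner integral in (\ref{1021-2}) at $|w|=r$. On $\{|w|>r\}$ I re-run verbatim the case (a), (b) or (c) estimate of Theorem~\ref{3main}; each such estimate controls the $w$-integral by a norm of $f$, namely $\sup_{|w|>r}(1-|w|^2)|f(w)|$ and$/$or $\big(\int_{|w|>r}|f|^p\tilde{\eta}dA\big)^{1/p}$, and restricting the domain replaces these by their tails, which tend to $0$ as $r\to1$ (the first by the little-Bloch decay just established, the second because $f\in A^p_{\tilde{\eta}}$); this yields a bound $\lesssim\varepsilon(r)\|g_n\|_{B_p(\tilde{\eta}_{[2-p]})}\lesssim\varepsilon(r)$ uniformly in $n$, with $\varepsilon(r)\to0$. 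On $\{|w|\le r\}$ I write $\overline{g_n(z)-g_n(w)}=\overline{g_n(z)}-\overline{g_n(w)}$, splitting the contribution into $\overline{g_n(z)}\Phi_r(z)$ and a term with kernel $f(w)\overline{g_n(w)}$; since $|1-\overline w z|\ge1-r$ there, $\Phi_r$ is bounded on $\D$, so the first piece is $\lesssim\|g_n\|_{L^p(\D,\tilde{\eta}dA)}\to0$ by the compactness of the embedding $B_p(\tilde{\eta}_{[2-p]})\hookrightarrow L^p(\D,\tilde{\eta}dA)$ (Lemma~\ref{0729-2}), while the second piece tends to $0$ uniformly in $z$ because $g_n\to0$ uniformly on $\{|w|\le r\}$ and $\tilde{\eta}$ is integrable. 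Letting $n\to\infty$ for fixed $r$ and then $r\to1$ gives $\limsup_n\|Tg_n\|_{L^p(\D,\tilde{\eta}dA)}=0$. Hence $\|g_n\|_{L^p(\D,\tilde{\mu})}\to0$, so $\tilde{\mu}$, and therefore $\mu$, is a vanishing $p$-Carleson measure for $B_p(\eta_{[2-p]})$, completing the proof.
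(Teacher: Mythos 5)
Your proposal is correct and follows essentially the same route as the paper: reduce to the regular weight $\tilde{\eta}$ via Proposition \ref{w yiwan}, obtain (i)$\Leftrightarrow$(ii) from the gradient identity and (iii)$\Rightarrow$(i) from the bounded projection together with Lemma \ref{0729-1}, and for (i)$\Rightarrow$(iii) show $\|f\overline{g_n}-\overline{h_{s,f}g_n}\|_{L^p(\D,\tilde{\eta}dA)}\to 0$ by splitting the inner integral at $|w|=r$, using tails of norms of $f$ on $\D\setminus r\D$ and the compact embedding of Lemma \ref{0729-2} plus uniform convergence on $r\D$ (your algebraic splitting of $g_n(z)-g_n(w)$ on $\{|w|\le r\}$ is an immaterial variant of the paper's estimate (\ref{38-4})). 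The one genuine addition is your little-oh refinement of Lemma \ref{1021-1}, namely $(1-|a|^2)|f(a)|\to 0$ under compactness; the paper's explicit case (a) does not need it, since the $A^p_{\tilde{\eta}}$-tail of $f$ already supplies the smallness there, but it is precisely the ingredient required to carry out the ``minor modifications'' the paper invokes for case (c), where only $M_f$ appears in the relevant estimate.
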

\begin{proof}
Similar to the proof of Theorem \ref{0729-3},  (i) and (ii) are equivalent.

Recall that a radial weight $\eta\in \B_{p, s}$   yields $\eta \in \mathcal D$.  Hence $\tilde{\eta} \in \mathcal R$. From \cite[p. 8]{PR2021},  $\tilde{\eta}(z) \thickapprox \tilde{\eta}(\zeta)$ whenever
$\rho(z, \zeta)<r$ with any fixed $r\in (0, 1)$.

$(iii)\Rightarrow (i)$. It follows from (iii) and Proposition  \ref{w yiwan} that $d\nu(z)=|f(z)|^p\tilde{\eta}(z) dA(z)$ is also a vanishing  $p$-Carleson measure for $B_p(\tilde{\eta}_{[2-p]})$.
Let $\{g_n\}$ be a bounded sequence in $B_p(\tilde{\eta}_{[2-p]})$ that converges to 0 uniformly on every compact subset of $\D$.  By Lemma \ref{0729-1},  $\|g_n\|_{L^p(\D, d\nu(z))}\rightarrow 0$ as $n\to \infty$.
  From the proof of ``$(iii)\Rightarrow (i)$" in Theorem \ref{0729-3} and Proposition  \ref{w yiwan} again, we know
  $$
\|h_{s,f} g_n\|^p_{L^p(\D, \tilde{\eta} dA)}\lesssim \|f g_n\|^p_{L^p(\D, \tilde{\eta} dA)}=\|g_n\|^p_{L^p(\D, d\nu)}.
$$
Then $\|h_{s,f} g_n\|^p_{L^p(\D, \tilde{\eta} dA)} \to 0$ as $n\to \infty$.  From  Lemma \ref{0729-1},  $h_{s,f}: B_p(\tilde{\eta}_{[2-p]}) \to L^p(\D, \tilde{\eta }dA)$ is a  compact  operator. Combining  this with
Proposition  \ref{w yiwan}, we get the desired result.

$(i)\Rightarrow (iii)$. Case (a).   Because of Proposition  \ref{w yiwan}, it suffices to show that $d\nu(z)=|f(z)|^p\tilde{\eta}(z) dA(z)$ is  a vanishing  $p$-Carleson measure for $B_p(\tilde{\eta}_{[2-p]})$.  Suppose  $\{g_n\}$ is  a bounded sequence in $B_p(\tilde{\eta}_{[2-p]})$ that converges to 0 uniformly on every compact subset of $\D$.  Note that $h_{s,f}: B_p(\tilde{\eta}_{[2-p]}) \to L^p(\D, \tilde{\eta }dA)$ is also a  compact  operator. Due to Lemma \ref{0729-1},
\begin{equation}\label{38-1}
\|h_{s,f} g_n\|^p_{L^p(\D, \tilde{\eta} dA)} \to 0 \ \ \text {as}\ \  n\to \infty.
\end{equation}
Note that $f\in A^p_{\tilde{\eta}}$. For any $\epsilon>0$, there exists $r$ in (0, 1) such that
\begin{equation}\label{38-2}
\left(\int_{\D \setminus r\D} |f(z)|^p \tilde{\eta}(z)dA(z)\right)^{p-1}<\epsilon.
\end{equation}
Checking Case (a) in  the proof of  ``$(i)\Rightarrow (iii)$" of  Theorem \ref{0729-3}, we get
{\small
\begin{align} \label{38-3}
&\left|\int_\D \frac{f(w)\overline{(g_n(z)-g_n(w))}}{(1-\overline{w}z)^{2+s}}dA_s(w)\right|^p \nonumber \\
\leq& M_f^{(2-p)p} \left(\left(\int_{r\D}+ \int_{\D \setminus r\D}\right) \frac{|f(w)|^{p-1}|g_n(z)-g_n(w)|}{|1-\overline{w}z|^{2+s}}dA_{p-2+s}(w)\right)^p \nonumber  \\
\lesssim & M_f^{(2-p)p} \|f\|_{A_{\tilde{\eta}}^p}^{p(p-1)} \int_{r\D}  \frac{|g_n(z)-g_n(w)|^p(1-|w|^2)^{(p-2+s)p}}{|1-\overline{w}z|^{(2+s)p}{\tilde{\eta}}(w)^{p-1}}  dA(w) \nonumber \\
&+ M_f^{(2-p)p} \left(\int_{\D \setminus r\D} |f(z)|^p \tilde{\eta}(z)dA(z) \right)^{p-1} \int_{\D}  \frac{|g_n(z)-g_n(w)|^pdA_{(p-2+s)p}(w)}{|1-\overline{w}z|^{(2+s)p}{\tilde{\eta}}(w)^{p-1}}
\end{align}
}
for all $z\in \D$ and all $n$.  By Lemma \ref{0729-2} and  the fact that $\{g_n\}$ converges to 0 uniformly on every compact subset of $\D$, there exits $N>0$ such that
\begin{align} \label{38-4}
&\ind  \int_{r\D}  \frac{|g_n(z)-g_n(w)|^p(1-|w|^2)^{(p-2+s)p}}{|1-\overline{w}z|^{(2+s)p}{\tilde{\eta}}(w)^{p-1}}  dA(w)  \tilde{\eta}(z)dA(z)\nonumber \\
\thickapprox &  \ind  \int_{r\D}  |g_n(z)-g_n(w)|^p  dA(w)  \tilde{\eta}(z)dA(z) \nonumber \\
\lesssim & \ind |g_n(z)|^p \tilde{\eta}(z)dA(z) + \int_{r\D}  |g_n(w)|^p dA(w) < \epsilon
\end{align}
for all $n>N$.
From (\ref{38-2}) and Case (a) in  the proof of  ``$(i)\Rightarrow (iii)$" of  Theorem \ref{0729-3}, we get
{\small
\begin{align} \label{38-5}
& \left(\int_{\D \setminus r\D} |f(z)|^p \tilde{\eta}(z)dA(z) \right)^{p-1} \ind  \int_{\D}  \frac{|g_n(z)-g_n(w)|^pdA_{(p-2+s)p}(w)}{|1-\overline{w}z|^{(2+s)p}{\tilde{\eta}}(w)^{p-1}} \tilde{\eta}(z)dA(z)\nonumber \\
\lesssim & \epsilon \|g_n\|^p_{B_p(\tilde{\eta}_{[2-p]})} \lesssim  \epsilon
\end{align}
}
for all $n$.
From (\ref{1021-2}), (\ref{38-3}), (\ref{38-4}) and (\ref{38-5}), we obtain
$$
\|f\overline{g_n}-\overline{h_{s,f}g_n}\|_{L^p(\D, \tilde{\eta} dA)} \lesssim  \epsilon.
$$
for all $n>N$. In other words,
 $$
 \lim_{n \to \infty}\|f\overline{g_n}-\overline{h_{s,f}g_n}\|_{L^p(\D, \tilde{\eta} dA)}=0.
 $$
Combining this with (\ref{38-1}), we see that  $\|f\overline{g_n} \|^p_{L^p(\D, \tilde{\eta} dA)} \to 0$ as $n\to \infty$. Thus  $d\nu(z)$ is  a vanishing  $p$-Carleson measure for $B_p(\tilde{\eta}_{[2-p]})$.
Case (b) and Case (c) can be proved in the similar way with minor modifications. We finish the proof.
\end{proof}
By Theorem \ref{0729-3} and the explanations    before Proposition   \ref{0728-1}, the  corresponding  results about the compactness of operators  in   Proposition   \ref{0728-1}, Theorem \ref{4main} and Theorem \ref{5main} also hold.


\begin{thebibliography}{99999}

\bibitem{AC} A. Aleman and O. Constantin, Spectra of integration operators on weighted Bergman spaces,  {\it  J. Anal. Math.}, {\bf 109} (2009), 199-231.

\bibitem{APR} A. Aleman, S. Pott, M. Reguera, Characterizations of a limiting class $B_\infty$ of B\'ekoll\'e-Bonami weights, {\it Rev. Mat. Iberoam.}, {\bf 35} (2019),  1677-1692.

\bibitem{AS} A. Aleman and A. Siskakis, Integration operators on Bergman spaces,
{\it Indiana Univ. Math. J.}, {\bf 46} (1997),  337-356.

\bibitem{AFP} J. Arazy, S. Fisher and J. Peetre, Hankel operators on weighted Bergman spaces, {\it Amer. J. Math.}, {\bf 110} (1988),  989-1053.



\bibitem{ARS} N. Arcozzi, R. Rochberg, and E. Sawyer, Carleson measures for analytic Besov spaces, {\it Rev. Mat. Iberoam.},  {\bf 18}  (2002), 443-510.

\bibitem{Ax} S. Axler, The Bergman space, the Bloch space, and commutators of multiplication operators, {\it Duke Math. J.},  {\bf 53} (1986), 315-332.

\bibitem{BGP} G. Bao, N. Gogus and S. Pouliasis, On Dirichlet spaces with a class of superharmonic weights,  {\it Canad. J. Math.}, {\bf 70} (2018),  721-741.

\bibitem{BLQW} G. Bao, Z. Lou, R. Qian and H. Wulan, On multipliers of Dirichlet type spaces, {\it Complex Anal. Oper. Theory},  {\bf 9} (2015),  1701-1732.

\bibitem{BWZ} G. Bao, H. Wulan and K. Zhu,  A Hardy-Littlewood theorem for Bergman spaces, {\it  Ann. Acad. Sci. Fenn. Math.},  {\bf 43}(2018), 807-821.

\bibitem{Bek} D. B\'ekoll\'e,  In\'egalit\'e \'a  poids pour le projecteur de Bergman dans la boule unit\'e  de $\C^n$, {\it Studia Math.}, {\bf 71} (1981/82), 305-323.

\bibitem{BB} D. B\'ekoll\'e and A. Bonami,  In\'egalit\'e \'a  poids pour le noyau de Bergman, {\it C. R. Acad. Sci. Pairs Ser. A-B}, {\bf 286} (1978), 775-778.


\bibitem{Be} A. Beurling, Ensembles exceptionnels, {\it Acta Math.}, {\bf 72} (1940), 1-13.

\bibitem{BP} D. Blasi and J. Pau,  A characterization of Besov-type spaces and applications to Hankel-type operators, {\sl  Michigan Math. J.}, {\bf 56} (2008),  401-417.

\bibitem{CM} C. Cowen and B.  MacCluer, Composition operators on spaces of analytic functions, {\sl Studies in Advanced Mathematics,  CRC Press, Boca Raton, FL,} 1995.

\bibitem{Do} J. Douglas, Solution of the problem of Plateau, {\it Trans. Amer. Math. Soc.}, {\bf 33} (1931), 263-321.

\bibitem{J} S. Janson, Generalizations of Lipschitz spaces and an application to Hardy spaces and bounded mean oscillation, {\it Duke Math. J.}, {\bf 47} (1980),  959-982.



\bibitem{LQ} Z. Lou and  R. Qian, Small Hankel operators on Dirichlet-type spaces and applications, {\sl Math. Inequal. Appl.}, {\bf 19} (2016),  209-220.

\bibitem{Mer} S.  Mergelyan, On completeness of systems of analytic functions, {\sl  Uspehi Matem. Nauk (N.S.)},  {\bf 8} (1953),  3-63.

\bibitem{OF} J. Ortega and J. F\'abrega,  Pointwise multipliers and corona type decomposition in $BMOA$, {\sl  Ann. Inst. Fourier (Grenoble)}, {\bf 46} (1996), 111-137.



\bibitem{PP} M. Pavlovi\'c and J. Pel\'aez, An equivalence for weighted integrals of an analytic function and its derivative, {\it Math. Nachr.}, {\bf 281} (2008),  1612-1623.



\bibitem{Pe} J. Pel\'aez,  Small weighted Bergman spaces, {\it Proceedings of the summer school in complex and harmonic analysis, and related topics},  2016.

\bibitem{PR2021} J. Pel\'aez and J.  R\"atty\"a, Bergman projection induced by radial weight, {\sl  Adv. Math.}, {\bf 391} (2021), paper no. 107950, 70 pp.

\bibitem{PR2014} J. Pel\'aez and J.  R\"atty\"a,  Weighted Bergman spaces induced by rapidly increasing weights, {\sl  Mem. Amer. Math. Soc.}, {\bf 227} (2014),  vi+124 pp.

\bibitem{PPR} J. Pel\'aez, A. Per\"al\"a and J.  R\"atty\"a,  Hankel operators induced by radial B\'ekoll\'e-Bonami weights on Bergman spaces,
{\it Math. Z.}, {\bf 296} (2020), 211-238.

\bibitem{PR} J. Pel\'aez and J. R\"atty\"a, Embedding theorems for Bergman spaces via harmonic analysis, {\sl Math. Ann.}, {\bf 362} (2015), 205-239.

\bibitem{PRS} J. Pel\'aez,  J. R\"atty\"a and K.  Sierra, Berezin transform and Toeplitz operators on Bergman spaces induced by regular weights, {\sl  J. Geom. Anal.}, {\bf 28} (2018),  656-687.

\bibitem{Po} S. Power, Hankel operators on Hilbert space, {\sl  Research Notes in Mathematics, 64. Pitman (Advanced Publishing Program), Boston, Mass.-London,} 1982.


\bibitem{Re} A.  Reijonen,  Besov spaces induced by doubling weights, {\sl  Constr. Approx.}, {\bf 53} (2021),  503-528.

\bibitem{RW} R. Rochberg and Z. Wu, A new characterization of Dirichlet type spaces and applications, {\sl Illinois J. Math.}, {\bf 37} (1993), 101-122.

\bibitem{WZ} H. Wulan and K. Zhu,  M\"obius Invariant $\mathcal{Q}_K$ Spaces, {\sl Springer, Cham,}  2017.

\bibitem{YZ} Ch. Yuan and H. Zeng, A double integral characterization of a Bergman type space and its M\"obius invariant subspace, {\sl Bull. Korean Math. Soc.}, {\bf 56} (2019),  1643-1653.


\bibitem {Zhao} R. Zhao, Distances from Bloch functions to some M\"obius invariant spaces,  {\it Ann. Acad. Sci. Fenn. Math.},  {\bf 33} (2008), 303-313.

\bibitem{Zhu} K. Zhu,  Operator Theory in Function Spaces, {\sl American Mathematical Society, Providence, RI,} 2007.

\bibitem{Zhu1} K. Zhu, Analytic Besov spaces, {\sl J. Math. Anal. Appl.}, {\bf 157} (1991),  318-336.

\end{thebibliography}
\end{document}